\documentclass{amsart}

\usepackage{graphicx}
\usepackage{bbm}
\usepackage[caption=false]{subfig}
\usepackage{epstopdf}

\newtheorem{theorem}{Theorem}[section]
\newtheorem{lemma}[theorem]{Lemma}

\theoremstyle{definition}
\newtheorem{definition}[theorem]{Definition}
\newtheorem{example}[theorem]{Example}

\theoremstyle{remark}
\newtheorem{remark}[theorem]{Remark}

\numberwithin{equation}{section}

\begin{document}

\title{Risk-sensitive linear-quadratic-Gaussian graphon mean-field games}

\author{Tian Chen}
\address{School of Mathematics, Shandong University, Jinan, 250100, China; and School of Mathematics and Statistics, Carleton University, Ottawa, ON  K1S 5B6, Canada.}
\email{chentian43@sdu.edu.cn}
\thanks{The first author was supported in part by the National Science Foundation of China under Grant 12401583.}

\author{Minyi Huang}
\address{School of Mathematics and Statistics, Carleton University, Ottawa, ON K1S 5B6, Canada. Corresponding author (M.H.). }
\email{mhuang@math.carleton.ca}
\thanks{The second author was supported in part by Natural Sciences and Engineering Research Council of Canada.}

\subjclass[2020]{Primary 91A16, 60G15; Secondary 91A43, 93E20. }



\keywords{Mean-field game, graphon, risk-sensitive, $\varepsilon$-Nash equilibrium. }

\begin{abstract}
This paper investigates a class of linear-quadratic-Gaussian risk-sensitive graphon mean-field games, involving an asymptotically infinite population of heterogeneous agents distributed across an asymptotically infinite network, where each agent aims to minimize an exponential cost functional reflecting its risk sensitivity. Following the Nash certainty equivalence methodology,   an auxiliary risk-sensitive optimal control problem is constructed and further combined with a consistency condition  to determine decentralized strategies of the agents. The well-posedness of the resulting graphon mean-field game equation system, consisting of a family of fully coupled forward-backward differential equations, is established by a fixed point approach under a contraction condition, and by the method of continuity under an operator monotonicity condition, respectively.  To prove the $\varepsilon$-Nash equilibrium property of the obtained  decentralized strategies,  one faces significant challenge since the usual $L^2$ error estimates on mean-field approximations are no longer adequate due to unboundedness of the integrand in the exponentiated cost. The proof will be accomplished by establishing certain exponentiated error estimates instead of $L^2$ error estimates.  Finally, a numerical example is provided to illustrate our results.
\end{abstract}

\maketitle

\section{Introduction}
Mean-field game (MFG) theory has rapidly developed into a robust framework for addressing non-cooperative dynamic decision-making in large populations of comparably small agents \cite{HMC2006,HCM2007,LL2007}. Most existing analyses are based on two approaches: the direct approach and the fixed point approach. The direct approach solves an $N$-player game, resulting in a large-scale coupled equation system. As the number of agents $N\rightarrow \infty$, this system reduces to a Hamilton--Jacobi--Bellman (HJB) equation coupled with a Fokker--Planck--Kolmogorov (FPK) equation \cite{LL2007,CDLL2019,L2016}. Alternatively, the fixed point approach starts by assuming a known mean-field effect and derives the best response control law of a representative agent. Subsequently, the above mean-field is required to be regenerated by the closed-loop dynamics of an infinite population.  This solution procedure is formalized as a fixed point problem \cite{HCM2007,BFY2013,CD2018}, and is designated as the Nash certainty equivalence methodology in \cite{HMC2006}. Recently, \cite{HZ2019} analyzed the exact relationship between these two approaches in the setup of linear-quadratic-Gaussian (LQG) MFGs.  MFG theory has found wide applications in areas such as economics and finance \cite{ET2015,FZ2023}, production output adjustment \cite{WH2019}, and traffic management \cite{BZ2016}, among others.

Graphon mean-field games (GMFGs) as introduced in \cite{CH2021} significantly extend the classical MFG framework by incorporating agents distributed across large networks.  As networks increase in size and complexity, graphon theory becomes a powerful tool to study their limits, enabling analysis of dense networks \cite{LS2006,BCLSV2008,BCLSV2012,L2012}. Graphons, as measurable functions representing the limit of increasingly large graphs, provide a rigorous mathematical framework for studying the behavior of large-scale networks. This framework is particularly studied for fields such as economics, control theory, social networks, and large-scale interacting particle systems, where agents demonstrate heterogeneous and complex interactions \cite{D2017,CCGL2022,AC2022,GCH2023,BCW2023,FGCH2024}. GMFGs  enable researchers to study the limiting behavior of systems with increasing populations of agents, capturing both local and global network interaction effects.

The aforementioned literature on MFGs or GMFGs has primarily focused on risk-neutral cost functionals, often neglecting agents' attitudes towards risk. However, in many practical scenarios, risk sensitivity plays a crucial role in decision-making, as recognized very early in \cite{HM1972}. Jacobson \cite{J1973} introduced a finite horizon risk-sensitive optimal control problem in an LQG setting, where the agent's cost is specified as an exponential function of the accumulated cost over time. Since the early work in \cite{HM1972,J1973}, this class of control problems has been extensively studied in the literature; see \cite{W1990,FM1995,FD1997,PB1996,BFN1998,LZ2005,D2013}. Within the framework of MFGs, risk-sensitive costs were adopted in \cite{TZB2013}, which  specified the mean-field equilibrium strategies using a corresponding pair of  HJB and FPK equations. Subsequently, Moon and Ba\c{s}ar \cite{MB2019} extended this line of research by employing the maximum principle to analyze nonlinear risk-sensitive MFG models.  Risk-sensitive  MFGs with major-minor agents were studied in \cite{CLX2023,LFB2023}, addressing the interaction structure between the major and minor agents. In the setting of discrete-time Markov decision processes, related results can be found in \cite{SBR2020}.  Recently, Wang and Huang \cite{WH2024} considered LQG risk-sensitive MFGs and proved an $O(1/N)$-Nash equilibrium theorem by applying a re-scaling technique to a  family of coupled-Riccati equation systems with increasing dimensions.

In this paper, we formulate a class of linear-quadratic-Gaussian GMFGs with risk-sensitive cost functionals. Using the Nash certainty equivalence methodology \cite{HMC2006,HCM2007}, we solve the risk-sensitive best response control problem of a representative agent after approximating its graphon coupling term by a deterministic function,  and subsequently impose a consistency condition on the proposed limiting function. This solution procedure leads to  the GMFG equation system, as a family of fully coupled forward-backward equations, whose well-posedness is established via a fixed point approach under a contraction condition, and by the method of continuity under an operator monotonicity condition, respectively. In contrast to similar conditions in the literature (see \cite{HP1995,PW1999}),  our monotonicity condition involves operators acting on an infinite dimensional space. Furthermore, by a novel method of exponentiated error estimate, we prove that the set of decentralized strategies computed via the GMFG equation system constitutes an $\varepsilon$-Nash equilibrium for the finite population of agents. When the running cost is unbounded as in the LQG case, one encounters significant difficulty proving that the set of  decentralized strategies obtained from the risk-sensitive MFG is an $\varepsilon$-Nash equilibrium.  In this case, the usual $L^2$ error estimates for mean-field approximations is inadequate for handling  exponentiated cost functionals; see Remark \ref{remark47} later on. Moreover,  the method of performance estimates in \cite{WH2024} is not applicable to the GMFGs due to  lack of symmetry of the agents.   To establish the $\varepsilon$-Nash equilibrium theorem, we take a different route by deriving new approximation error estimates of exponential type  without imposing any additional restrictions as used in the literature (see discussions in \cite[Section 4.4]{LFB2023}).

The main contributions of this paper are summarized as follows:

\begin{itemize}
  \item  We introduce a novel risk-sensitive GMFG model, where the large population of agents is distributed over dense networks.

  \item The solution of GMFG is characterized by a fully coupled forward-backward equation system parameterized by the nodal index. The unique solvability of the GMFG  equation system is analyzed by two methods: (i) the fixed point method under a contraction condition, (ii) the method of continuity under a new operator monotonicity condition.

  \item Based on the GMFG equation system, we construct decentralized feedback strategies in the $N$-player model. A novel method via exponentiated error estimates is applied to prove that the set of decentralized strategies constitutes an $\varepsilon$-Nash equilibrium.

\end{itemize}

The paper is  organized as follows. Section \ref{sec:problem}  formulates the risk-sensitive GMFG. Section \ref{sec:design} solves the best response  control problem in a graphon mean-field limit model. By imposing a consistency condition,  Section \ref{sec:consistency} derives the GMFG equation system and investigates its solvability  by two approaches: the fixed point method and the method of continuity. In Section \ref{sec:Nash}, we  prove an $\varepsilon$-Nash equilibrium theorem for the obtained decentralized strategies. Section \ref{sec:numerical} presents a numerical example. Section \ref{sec:conclusion} concludes the paper.

\section{Problem formulation and preliminary}
\label{sec:problem}

We use $\mathbb{R}^m$ to denote the $m$-dimensional Euclidean space, consisting of column vectors, with norm  $|\cdot|$ and inner product  $\langle \cdot,\cdot\rangle$. Let $D^{ \top}$ (resp., $D^{-1}$) stand for the transpose (resp., inverse) of a matrix  $D$, and $\mathbb S^m$ for the set of symmetric real $m\times m$ matrices. If  $D\in \mathbb{S}^n$ is positive definite (resp., positive semi-definite), we write $D> 0$ (resp., $ D\ge 0$). Moreover, if an $\mathbb S^n$-valued deterministic function $D$ defined on $[0,T]$ is uniformly positive definite, we write $D\gg 0$. For two sequences of number $\{a_k\}_{k\geq 1}$ and $\{b_k\}_{k\geq 1}$ with $b_k>0$, if $\lim_{k\rightarrow \infty}\frac{a_k}{b_k}=0$, we write $a_k=o(b_k)$.

Let $(\Omega,\mathcal{F},\mathbb F,\mathbb{P})$ be a complete filtered probability space with  filtration $\mathbb F:=\{\mathcal F_t\}_{0\leq t\leq T}$. Given Hilbert space $\mathbb H$ and interval $[0,T]$, let  $L^{\infty}([0,T]; \mathbb{H})$ stand for the space of all $\mathbb H$-valued essentially bounded functions defined on $[0,T]$, $C([0,T];\mathbb H)$ for the space of all $\mathbb H$-valued continuous functions, $L^2_{\mathbb{F}}([0,T];\mathbb{H})$ for the space of all $\mathbb H$-valued, $\mathcal{F}_t$-adapted, square-integrable processes, and $S^2_{\mathbb{F}}([0,T];\mathbb{H})$ for the space of all $\mathbb H$-valued, $\mathcal{F}_t$-adapted continuous processes $\phi$ such that $ \mathbb{E}[\sup_{0\leq t\leq T}|\phi(t)|^2] < \infty$.

\subsection{Preliminary}\label{sec:sub:preliminary}

The basic idea of the theory of graphons is that the edge structure of each finite cardinality network is represented by a step function density on the unit square in $\mathbb R^2$ on which the cut norm and cut metric (see \cite{L2012}) are introduced.  The set of finite graphs endowed with the cut metric then gives rise to a metric space, and the completion is the space of graphons. Let $\bf{G}^{sp}_0$ be the linear space of bounded  symmetric Lebesgue measurable functions $W:[0,1]^2 \rightarrow \mathbb R$, which are called kernels. The space $\bf{G}^{sp}$ of graphons is a subset of  $\bf{G}^{sp}_0$ and consists of kernels $W:[0,1]^2 \rightarrow [0,1]$, which can be interpreted as weighted graphs on the vertex set $[0,1]$.

In this paper, we start the modeling of the game of a finite population based on a finite graph. Specifically, the population resides on a weighted finite graph $G_N$ with a set of nodes $\mathcal V_N=\{1,2,\cdots, N\}$ and weights $g_{ij}^N\in [0,1]$ for $(i,j)\in \mathcal V_N\times \mathcal V_N$, where a value $g_{ii}^N$ is still assigned in the case $i=j$. We call $g_i^N=(g_{i1}^N, g_{i2}^N,\cdots, g_{iN}^N)$ a section of $g^N$ at $i$. Moreover, each node $l$ is occupied by exactly one agent, denoted by ${\mathcal A}_l$. In fact, it brings about no essential difficulty to consider the case where each node $l$ is occupied by a subpopulation.  Our further analysis is based on the convergence of $g^N$ to a graphon limit $g$. We may naturally identify $(g_{ij}^N)_{1\leq i,j\leq N}$ with a graphon $g^N(\alpha,\beta)$ as a step function defined on $[0,1]\times [0,1]$ (see \cite{L2012}).  We define the section of $g$ at $\alpha$ by $g_{\alpha}:\beta\mapsto g({\alpha,\beta})$, $\beta\in [0,1]$. We partition $[0,1]$ into $N$ subintervals of equal length. Here $I_1^N=[0,1/N]$, $I_l^N=((l-1)/N,l/N]$ for $2\leq l\leq N$. When it is clear from the context, we omit the superscript $N$ and write $I_l$. To relate the agents to the graphon vertex set $[0,1]$, we let the $i$-th agent correspond to $I_i$.

\subsection{Graphon MFG}
For the risk-sensitive GMFG, consider a population of $N$ agents $\{\mathcal A_i\}_{1\leq i\leq N}$. The dynamics of agent $\mathcal A_i$ residing  at node $i$ are governed by the following SDE:
\begin{equation}\label{state}
  \begin{aligned}
    &\mathrm{d}x_i(t)=[A(t)x_i(t)+B(t)u_i(t)+D(t)x^{(N)}_i(t)]\mathrm{d}t+\sigma(t)\mathrm{d}w_i(t),\quad
    x_i(0)=\xi_i,
  \end{aligned}
\end{equation}
where $x_i(t)\in \mathbb R^n$ is the state, $u_i(t)\in \mathbb R^m$ is the control, and $w_i, 1\leq i\leq N$, are independent $d$-dimensional standard Brownian motions. Here, $x^{(N)}_i=\frac{1}{N}\sum_{j=1}^N g_{ij}^Nx_j$  represents the weighted state-average of all agents with respect to node $i$.

Now we introduce two types of state feedback strategies. Let the functions $\varphi(t,x_1,\cdots, x_N):[0,T]\times \mathbb R^n \times \cdots\times \mathbb R^n\rightarrow \mathbb R^m$ and $\psi(t,x):[0,T]\times \mathbb R^n\rightarrow \mathbb R^m$ be continuous in $t$, and Lipschitz continuous in $(x_1,\cdots,x_N)$ and $x$, respectively. For agent $\mathcal A_i$, we introduce
\begin{equation}\label{admissibleset}
    \mathcal U_c^i=\{u_i\ |\ u_i=\varphi(t,x_1,\cdots,x_N)\},\quad\quad \mathcal U_d^i=\{u_i\ |\ u_i=\psi(t,x_i)\}
\end{equation}
as the set of centralized (state) feedback strategies and the set of decentralized (state) feedback strategies, respectively. For simplicity, let $u=(u_1,\cdots, u_N)$ be the set of strategies of $N$ agents and $u_{-i}=(u_1,\cdots, u_{i-1}, u_{i+1},\cdots, u_N)$ be the set of strategies of all agents other than $\mathcal A_i$. Note that while the coefficients are dependent on the time variable $t$, in further analysis, the variable $t$ will usually be suppressed if no confusion occurs.

The risk-sensitive cost functional of agent $\mathcal A_i$ takes the form
\begin{equation}\label{cost}
  \begin{aligned}
    \!\mathcal J_i(u_i,u_{-i}) \!=\! \mathbb E\Big[\exp\Big( {\gamma}\Big\{\!\int_0^T\!\!\big[\|x_i\!-\!\Gamma x^{(N)}_i\|_{Q}^2\! +\!\|u_i\|^2_{R}\big]\mathrm{d}t \!+\!\|x_i(T)\!-\!\Gamma_f x^{(N)}_i(T)\|_{Q_f}^2\Big\}\Big)\Big],
  \end{aligned}
\end{equation}
where we denote $\|x\|_{Q}^2=\langle Qx, x\rangle $, and both $Q$ and $R$ are matrix functions of $t$. Here $\gamma>0$ is a given constant representing the risk sensitivity parameter.

For convenience of describing the asymptotic behavior of the agents and the networks, we need to relate the node of each agent to the continuum nodal set $[0,1]$. Without loss of generality, we associate agent ${\mathcal A}_i$, $1\le i\le N$,
with the subinterval $I_i$ in the partition of $ [0,1]$, which in turn determines the corresponding  step function $g^N(\cdot, \cdot)$.
We introduce the following assumptions.

\textbf{(H1)} The initial states $\xi_i, 1\leq i\leq N$, are independent random variables taking values in a compact set $S_0^x\subset \mathbb R^n$, and are independent of $\{w_i, 1\leq i\leq N\}$. There exists a continuous function $m^x_\alpha $ defined on $ [0,1]\ni \alpha $, such that $\lim_{N\rightarrow \infty}\sup_{0\leq \alpha\leq 1}|\mathbb E\xi_{\alpha}^N $ $-m^x_\alpha|=0$,  where $\xi_{\alpha}^N=\sum_{i=1}^N\mathbbm 1_{I_i}(\alpha)\xi_i$.

\textbf{(H2)} $A, D \in L^{\infty}([0,T];\mathbb R^{n\times n})$, $B\in L^{\infty}([0,T];\mathbb R^{n\times m})$, $\sigma\in L^{\infty}([0,T];\mathbb R^{n\times d})$.

\textbf{(H3)} $Q\in L^{\infty}([0,T];\mathbb S^n)$, $R\in L^{\infty}([0,T];\mathbb S^m)$, $Q_f\in \mathbb S^n$, $\Gamma, \Gamma_f\in \mathbb R^{n\times n}$, and $Q\geq 0$, $R\gg 0$, $Q_f\geq 0$.

\textbf{(H4)} $BR^{-1}B^{\top}-{2}{\gamma}\sigma\sigma^{\top}\geq   0$ for all $t\in [0,T]$.

\textbf{(H5)} For any bounded, measurable function $h(\beta)$, the function $\int_0^1g(\alpha,\beta)h(\beta)\mathrm{d}\beta$ is continuous in $\alpha\in [0,1]$.

\begin{remark}
   Assumption (H5) is fulfilled under mild conditions on the graphon $g$. We identify $g(\alpha,\cdot)$ as an element in the dual space of $L^{\infty}[0,1]$, i.e., $g(\alpha,\cdot)\in (L^{\infty}[0,1])'$; see \cite[p. 118, Example 5]{Y1971}. Denote $\varphi(\alpha)=g(\alpha,\cdot)$ as a $(L^{\infty}[0,1])'$-valued mapping. Then (H5) is eqivalent to weak convergence of $\varphi(\alpha_k)$ to $\varphi(\alpha)$ whenever $\alpha_k\rightarrow \alpha$. A sufficient condition to ensure (H5) is that $g(\alpha_k,\cdot)\rightarrow g(\alpha,\cdot)$ in $L^1[0,1]$ as $\alpha_k\rightarrow \alpha$. Typical examples of graphons satisfying this condition include the uniform attachment graphon  $g_1(\alpha,\beta)=1-\max(\alpha,\beta)$ (see \cite[p. 188]{L2012}) and the half graphon $g_2(\alpha,\beta)=\mathbbm 1_{\{\beta\geq \alpha+0.5\text{ or } \alpha\geq \beta+0.5\}}$ (see \cite[p. 17]{L2012}). The half graphon is not continuous on $[0,1]^2$.
\end{remark}

Under (H1)-(H3), one can easily check that SDE \eqref{state} admits a unique solution $x_i\in S^2_{\mathbb F}([0,T];\mathbb R^n)$ for any control $u_i\in L^2_{\mathbb F}([0,T];\mathbb R^m)$ and the cost functional \eqref{cost} is well-defined.

A basic solution notion for \eqref{state}-\eqref{cost} is a Nash equilibrium $(u^*_1,\cdots, u_N^*)$, where each $u_i^*$ belongs to $\mathcal U_c^i$, corresponding to closed-loop perfect state (CLPS) information \cite{BO1998}.  However, such a solution with its associated information pattern is impractical when the system consists of a large number of heterogeneous agents.

\section{The design of decentralized strategies}\label{sec:design}
To design decentralized strategies, we need to study an auxiliary best response control problem with an infinite population and a graphon limit $g$ as an approximation of $g^N$ (the nature of the approximation will be made exact by (H7) in Section \ref{sec:Nash}). For an $\alpha$-agent ${\mathcal A}_\alpha$ situated at vertex $\alpha$, denote the graphon weighted mean state by $ z_{\alpha}\in C([0,T];\mathbb R^n)$, which is intended to approximate $x_i^{(N)}$ when the nodal location of $\mathcal A_i$ is approximated by $\alpha$. Let $w_{\alpha}$ be the standard Brownian motion associated with the $\alpha$-agent. Then the $\alpha$-agent has dynamics
\begin{equation}\label{lstate}
  \begin{aligned}
    &\mathrm{d}x_{\alpha}=(Ax_{\alpha}+Bu_{\alpha}+D z_{\alpha})\mathrm{d}t+\sigma\mathrm{d}w_{\alpha}, \quad x_{\alpha}(0)=\xi_{\alpha},\quad  \alpha\in [0,1],
  \end{aligned}
\end{equation}
and cost functional
\begin{equation}\label{lcost}
  \begin{aligned}
    J_{\alpha}(u_{\alpha})\!=\! \mathbb E\Big[\!\exp\Big( {\gamma}\Big\{\int_0^T\!\big[\|x_{\alpha}\!-\!\Gamma z_{\alpha}\|_{Q}^2\!+\!\|u_{\alpha}\|^2_{R}\big]\mathrm{d}t \!+\!\|x_{\alpha}(T)\!-\!\Gamma_f  z_{\alpha}(T)\|_{Q_f}^2\Big\}\Big)\Big].
  \end{aligned}
\end{equation}
The Brownian motion $w_{\alpha}$ and the initial state $\xi_{\alpha}$ are  independent. The auxiliary best response (BR) control problem is formulated as follows.

\textbf{Problem (BR).} For the $\alpha$-agent $\mathcal A_{\alpha}$ in \eqref{lstate}-\eqref{lcost},  find a feedback control law $\bar u_{\alpha}$ such that \vspace{-1em}
\begin{equation*}
  J_{\alpha}(\bar u_{\alpha})=\inf_{u_{\alpha}} J_{\alpha}(u_{\alpha}).
\end{equation*}

For any given $t\! \in\!  [0,T]$ and a deterministic initial condition $x_{\alpha}(t)\! =\! {\bf x}\! \in\!  \mathbb R^n$, define
\begin{equation*}
    J_{\alpha}(t,{\bf x};u_{\alpha})\!=\! \mathbb E\Big[\!\exp\Big( {\gamma}\Big\{\int_t^T\!\big[\|x_{\alpha}\!-\!\Gamma z_{\alpha}\|_{Q}^2\!+\!\|u_{\alpha}\|^2_{R}\big]\mathrm{d}s \!+\!\|x_{\alpha}(T)\!-\!\Gamma_f  z_{\alpha}(T)\|_{Q_f}^2\Big\}\Big)\Big].
\end{equation*}
and write the value function in the form
\begin{equation*}
    \inf_{u_{\alpha}} J_{\alpha}(t,{\bf x}; u_{\alpha})=e^{\gamma V^{\alpha}(t,{\bf x})},
\end{equation*}
where the function $V^{\alpha}$ is to be determined.
We write the gradient of a function as a row vector.
Applying dynamic programming (see \cite[Eqn. (1.4)]{BFN1998}), we have
\begin{equation}\label{HJB}
\left\{
  \begin{aligned}
    &\!-\!\frac{\partial V^{\alpha}(t,{\bf x})}{\partial t}\!=\!\inf_{u\in \mathbb R^m}\Big\{\frac{\partial V^{\alpha}(t,{\bf x})}{\partial {\bf x}}[A{\bf x}\!+\!Bu\!+\!Dz_{\alpha}(t)]\!+\|{\bf x}-\Gamma z_{\alpha}(t)\|_{Q}^2\!+\!\|u\|^2_{R}\Big\}\\
    &\!\qquad\qquad\qquad\qquad\!+\!\frac{1}{2}\text{Tr}\Big(\sigma\sigma^{\top} \frac{\partial^2V^{\alpha}(t,{\bf x})}{\partial {\bf x}^2}\Big)\!+\!\frac{\gamma}{2}\Big|\sigma^{\top}\Big(\frac{\partial V^{\alpha}(t,{\bf x})}{\partial {\bf x}}\Big)^{\top}\Big|^2,\\
    &V^{\alpha}(T,{\bf x})\!=\!\|{\bf x}\!-\!\Gamma_f  z_{\alpha}(T)\|_{Q_f}^2,\quad (t,{\bf x})\in [0,T]\times \mathbb R^n,\quad \alpha\in [0,1],
  \end{aligned}
  \right.
\end{equation}
  and the optimal control law is given by
\begin{equation*}
    \bar u_{\alpha}(t)=-\frac{1}{2}R^{-1}(t)B^{\top}(t)\Big(\frac{\partial V^{\alpha}(t,{\bf x})}{\partial{\bf x}}\Big)^{\top}.
\end{equation*}

We write $V^{\alpha}(t,{\bf x})={\bf x}^{\top}\Pi(t){\bf x}+2{\bf x}^{\top}S_{\alpha}(t)+r_{\alpha}(t)$. Then by \eqref{HJB}, we derive the following Riccati equation
\begin{equation}
    \begin{aligned}\label{Phi}
      &\dot{\Pi}+\Pi A+A^{\top}\Pi-\Pi(BR^{-1}B^{\top}-{2}{\gamma}\sigma\sigma^{\top})\Pi+Q=0,\quad
      \Pi(T)=Q_f.
    \end{aligned}
  \end{equation}
  Under (H2)-(H4), Riccati equation \eqref{Phi} admits a unique solution $\Pi\in C([0,T];\mathbb S^n)$; see \cite[pp. 186-190]{AFIJ2003}. By \eqref{HJB}, we further derive the following ordinary differential equations (ODEs):
\begin{equation}\label{Sigma1}
    \begin{aligned}
      &\dot{S}_{\alpha}\!+\!\big(A^{\top}\!-\!\Pi BR^{-1}B^{\top}\!+\!{2}{\gamma}\Pi\sigma\sigma^{\top}\big)S_{\alpha}\!-\!(Q\Gamma\!-\!\Pi D) z_{\alpha}\!=\!0,
    \end{aligned}
  \end{equation}
  where $S_{\alpha}(T)=-Q_f{\Gamma}_fz_{\alpha}(T)$, and
  \begin{equation}\label{rho}
    \begin{aligned}
      &\dot{r}_{\alpha}\!-\!S^{\top}_{\alpha}\big(BR^{-1}B^{\top}\!-\!{2}{\gamma}\sigma\sigma^{\top}\big)S_{\alpha}\!+\!2z^{\top}_{\alpha} D^{\top}S_{\alpha}\!+\!z^{\top}_{\alpha}\Gamma^{\top} Q\Gamma z_{\alpha}\!+\!\text{Tr}(\sigma\sigma^{\top}\Pi)\!=\!0,
    \end{aligned}
  \end{equation}
  where $r_{\alpha}(T)\!=\!z^{\top}_{\alpha}(T){\Gamma}_f^{\top}Q_f{\Gamma}_fz_{\alpha}(T)$. Under (H2)-(H4), the ODE system \eqref{Sigma1}-\eqref{rho} admits a unique solution $(S_{\alpha},r_{\alpha})\in C([0,T];\mathbb R^n)\times C([0,T];\mathbb R)$. Then we obtain the candidate optimal control 
\begin{equation*}
   \bar u_{\alpha}=-R^{-1}B^{\top}\Pi x_{\alpha}-R^{-1}B^{\top}S_{\alpha}.
\end{equation*}

Now, we show that $\bar u_{\alpha}$ is indeed the optimal control of Problem (BR). Applying It\^o's formula to $\langle \Pi x_{\alpha}, x_{\alpha}\rangle +2\langle S_{\alpha}, x_{\alpha}\rangle$ and using the completion of squares technique, we have
\begin{equation*}\begin{aligned}
    J_{\alpha}( u_{\alpha})&=\mathbb E\big[e^{\gamma \{\xi_{\alpha}^{\top}\Pi(0)\xi_{\alpha}+2\xi_{\alpha}^{\top}S_{\alpha}(0)+r_{\alpha}(0) +\int_0^T\|u_{\alpha}+R^{-1}B^{\top}(\Pi x_{\alpha}+S_{\alpha})\|_R^2\mathrm{d}t \}}\big].
\end{aligned}\end{equation*}
Since $\gamma>0$,  $J_{\alpha}$ attains its minimum when $u_{\alpha}=\bar u_{\alpha}$, which implies that $\bar u_{\alpha}$ is indeed the optimal control. We summarize the above results into the following theorem.
\begin{theorem}\label{baru}
  Under (H1)-(H4),  the optimal control law of Problem (BR) is given by
  \begin{equation}\label{law}
    \bar u_{\alpha}(t)=-R^{-1}(t)B^{\top}(t)\Pi(t)  x_{\alpha}(t)-R^{-1}(t)B^{\top}(t)S_{\alpha}(t),\qquad \alpha\in [0,1],
  \end{equation}
  for which the closed-loop state process $x_{\alpha}$ satisfies
  \begin{equation}\label{optimalstate}
    \begin{aligned}
      &\mathrm{d} x_{\alpha}\!=\![(A\!-\!BR^{-1}B^{\top}\Pi) x_{\alpha}\!-\!BR^{-1}B^{\top}S_{\alpha}\!+\!Dz_{\alpha}]\mathrm{d}t \!+\!\sigma\mathrm{d}w_{\alpha}, \quad
      x_{\alpha}(0)\!=\!\xi_{\alpha}.
    \end{aligned}
  \end{equation}
Moreover, the optimal cost is given by
  \begin{equation}\label{optimalcost1}
    \begin{aligned}
      J_{\alpha}(\bar u_{\alpha})= \mathbb E\big[\exp\big(\gamma\big\{\xi_{\alpha}^{\top}\Pi(0)\xi_{\alpha}+2\xi_{\alpha}^{\top}S_{\alpha}(0)+r_{\alpha}(0)\big\}\big)\big].
    \end{aligned}
  \end{equation}
\end{theorem}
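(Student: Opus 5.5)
The plan is to make rigorous the completion-of-squares identity sketched just before the statement. Fix $\alpha$ and $z_\alpha\in C([0,T];\mathbb R^n)$. Take $\Pi$ solving \eqref{Phi} (which exists under (H2)--(H4)) and $(S_\alpha,r_\alpha)$ solving the linear ODEs \eqref{Sigma1}--\eqref{rho}, and set $V(t,x)=x^\top\Pi x+2x^\top S_\alpha+r_\alpha$. For an admissible $u_\alpha$ with state $x_\alpha$ from \eqref{lstate}, I will apply It\^o's formula to $V(t,x_\alpha(t))$ on $[0,T]$. Then I substitute $\dot\Pi,\dot S_\alpha,\dot r_\alpha$ from the three equations and complete the square in $u$ via
\[
\|u\|_R^2+2u^\top B^\top(\Pi x+S_\alpha)=\|u+R^{-1}B^\top(\Pi x+S_\alpha)\|_R^2-\|B^\top(\Pi x+S_\alpha)\|_{R^{-1}}^2 .
\]
This gives, almost surely, the identity
\[
\int_0^T\!\big[\|x_\alpha-\Gamma z_\alpha\|_Q^2+\|u_\alpha\|_R^2\big]dt+\|x_\alpha(T)-\Gamma_f z_\alpha(T)\|_{Q_f}^2 =V(0,\xi_\alpha)+\int_0^T\!\|u_\alpha-\bar u_\alpha\|_R^2dt-2\gamma\!\int_0^T\!|\sigma^\top(\Pi x_\alpha+S_\alpha)|^2dt+M_T,
\]
where $M_T=\int_0^T 2(\Pi x_\alpha+S_\alpha)^\top\sigma\,dw_\alpha$. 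The $-2\gamma$ term is exactly the $\frac{\gamma}{2}|\sigma^\top\nabla V|^2$ term of \eqref{HJB}, since $\nabla V^\top=2(\Pi x+S_\alpha)$.

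Next I exponentiate. Writing $\theta=2\gamma\sigma^\top(\Pi x_\alpha+S_\alpha)$, we have $\gamma M_T-2\gamma^2\int|\sigma^\top(\Pi x_\alpha+S_\alpha)|^2dt=\int\theta^\top dw_\alpha-\frac12\int|\theta|^2dt$. Therefore
\[
J_\alpha(u_\alpha)=\mathbb E\Big[e^{\gamma V(0,\xi_\alpha)+\gamma\int_0^T\|u_\alpha-\bar u_\alpha\|_R^2dt}\,\mathcal E_T\Big],
\]
with $\mathcal E_T$ the Dol\'eans-Dade exponential of $\int\theta^\top dw_\alpha$. For feedback laws that are Lipschitz in the state, I will define the measure $d\tilde{\mathbb P}=\mathcal E_T\,d\mathbb P$. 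Under $\tilde{\mathbb P}$, the process $\tilde w_\alpha=w_\alpha-\int\theta\,dt$ is a Brownian motion, and since $\xi_\alpha$ is $\mathcal F_0$-measurable its law is unchanged. Hence $J_\alpha(u_\alpha)=\tilde{\mathbb E}[e^{\gamma V(0,\xi_\alpha)}e^{\gamma\int\|u_\alpha-\bar u_\alpha\|_R^2}]\ge \tilde{\mathbb E}[e^{\gamma V(0,\xi_\alpha)}]=\mathbb E[e^{\gamma V(0,\xi_\alpha)}]$, because $\gamma>0$ and $R\gg0$. Equality holds iff $u_\alpha=\bar u_\alpha$ a.e., which yields \eqref{law}. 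Substituting \eqref{law} into \eqref{lstate} gives \eqref{optimalstate}, and the equality case gives \eqref{optimalcost1}.

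The main obstacle is showing that $\mathcal E$ is a true martingale, i.e.\ $\mathbb E\mathcal E_T=1$, so that the change of measure is legitimate. Here $\theta$ is affine in the Gaussian-like state $x_\alpha$ and hence unbounded, so Novikov's condition over the whole interval may fail. I will handle this with the standard Bene\v{s}-type argument. Under the Lipschitz feedback $u_\alpha=\psi(t,x_\alpha)$ and the compactly supported $\xi_\alpha$ from (H1), $\theta$ has linear growth in $x_\alpha$. Partition $[0,T]$ into short intervals and apply Novikov on each, using $\mathbb E e^{\epsilon\sup|x_\alpha|^2}<\infty$ for small $\epsilon$. Alternatively, use a localization argument: verify $\mathbb E\mathcal E_{T\wedge\tau_k}=1$ and show that the tilted SDE under $\tilde{\mathbb P}$ does not explode. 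If only the inequality $\mathbb E\mathcal E_T\le1$ (supermartingale) were available, it would still give the lower bound for non-optimal $u_\alpha$ through a localized Fatou argument. For $\bar u_\alpha$ itself I will verify directly that the exponential is a martingale, since the closed-loop system is linear Gaussian.
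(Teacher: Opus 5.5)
Your argument is correct. It has the same skeleton as the paper's: It\^o's formula for the quadratic $V$ followed by completion of squares in $u$. It differs at exactly the point where the paper is loose.

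The paper asserts $J_\alpha(u_\alpha)=\mathbb E[\exp(\gamma\{V(0,\xi_\alpha)+\int_0^T\|u_\alpha-\bar u_\alpha\|_R^2\mathrm{d}t\})]$ directly under $\mathbb P$. In doing so it silently discards $\gamma M_T-2\gamma^2\int_0^T|\sigma^\top(\Pi x_\alpha+S_\alpha)|^2\mathrm{d}t=\log\mathcal E_T$.

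As your computation shows, that identity holds only with $\tilde{\mathbb E}$ in place of $\mathbb E$:
\begin{itemize}
\item for $u_\alpha\neq\bar u_\alpha$, the weight $\mathcal E_T$ is correlated with $\int_0^T\|u_\alpha-\bar u_\alpha\|_R^2\mathrm{d}t$ and cannot simply be dropped;
\item even \eqref{optimalcost1} at $u_\alpha=\bar u_\alpha$ needs $\mathbb E[\mathcal E_T\mid\mathcal F_0]=1$.
\end{itemize}
So your change of measure is the missing justification. The only substantive work is the true-martingale property of $\mathcal E$, which the paper never addresses.

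Your Novikov-on-subintervals plan handles it. Note that the linear growth of $\theta$ comes from its form alone. What the Lipschitz feedback supplies is the Gronwall bound $\sup_t|x_\alpha|\le C(1+|\xi_\alpha|+\sup_t|\int_0^t\sigma\,\mathrm{d}w_\alpha|)$. The right side has Gaussian tails, so a fine enough partition makes each $\mathbb E\exp(\frac12\int_{t_{k-1}}^{t_k}|\theta|^2\mathrm{d}t)$ finite.

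Drop the fallback sentence about the supermartingale case; it has the inequality backwards.
\begin{itemize}
\item If $\mathcal E$ is merely a supermartingale, $\mathbb E[\mathcal E_T\mid\mathcal F_0]\le1$ yields only $J_\alpha(\bar u_\alpha)\le\mathbb E\,e^{\gamma V(0,\xi_\alpha)}$. That is an upper bound at $\bar u_\alpha$, not a lower bound for competitors.
\item For a competitor you only get $J_\alpha(u_\alpha)\ge\mathbb E[e^{\gamma V(0,\xi_\alpha)}\mathcal E_T]$, which is useless unless $\mathbb E[\mathcal E_T\mid\mathcal F_0]=1$ for that control.
\item Localization does not rescue this. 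With $Y_t=\exp(\gamma\{\int_0^t[\|x_\alpha-\Gamma z_\alpha\|_Q^2+\|u_\alpha\|_R^2]\mathrm{d}s+V(t,x_\alpha(t))\})$ you get $\mathbb E Y_{T\wedge\tau_k}\ge\mathbb E Y_0$. But Fatou only gives $\mathbb E Y_T\le\liminf_k\mathbb E Y_{T\wedge\tau_k}$, again the wrong direction.
\end{itemize}
Hence the martingale property must be verified for every competing control, which your main argument does.

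A bonus of your route: the same subinterval Novikov bound works for any adapted $u_\alpha$ with $J_\alpha(u_\alpha)<\infty$. Indeed, $R\gg0$ then gives $\mathbb E\exp(\epsilon\int_0^T|u_\alpha|^2\mathrm{d}t)<\infty$ for some $\epsilon>0$. This broader optimality is what the paper actually uses in Step 2 of the proof of Theorem \ref{thmepsilon}, where the competitor $u_i^{cl}$ is not a feedback of $x_i$ alone.
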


\section{Consistency condition}\label{sec:consistency}
The best response control problem of Section \ref{sec:design} has been solved by assuming each $z_{\alpha}(\cdot) $ to be a known function defined on $[0,T]$. Following \cite{CH2021,FGCH2024}, we specify $z_{\alpha}$ by imposing a consistency condition such that $z_{\alpha}(t)$ is regenerated by the graphon weighted average of the individual means:
\begin{equation}\label{star}
    z_{\alpha}(t)=\int_0^1g(\alpha,\beta)\mathbb Ex_{\beta}(t)\mathrm{d}\beta.
\end{equation}
Combining \eqref{optimalstate}, \eqref{Sigma1} with \eqref{star}, we derive the following GMFG equation system:
\begin{equation}\label{CCR}
  \left\{
  \begin{aligned}
    &\dot{z}_{\alpha}\!=\!\big(A\!-\!BR^{-1}B^{\top}\Pi\big)z_{\alpha}\!+\!D\int_0^1g(\alpha,\beta)z_{\beta}\mathrm{d}\beta\!-\!BR^{-1}B^{\top}\int_0^1g(\alpha,\beta)S_{\beta}\mathrm{d}\beta, \\
    &\dot{S}_{\alpha}\!=\!-\big(A^{\top}-\Pi BR^{-1}B^{\top}\!+\!{2}{\gamma}\Pi\sigma\sigma^{\top}\big)S_{\alpha}\!+\!(Q\Gamma\!-\!\Pi D) z_{\alpha},\\
    &z_{\alpha}(0)\!=\!\int_0^1g(\alpha,\beta)m^x_{\beta}\mathrm{d}\beta,\quad S_{\alpha}(T)\!=\!-Q_f{\Gamma}_fz_{\alpha}(T),\quad \alpha\in [0,1],
  \end{aligned}
  \right.
\end{equation}
which is a family of fully coupled forward-backward equations. In the remaining part of this section, we employ two methods---the fixed point method and the method of continuity---to investigate the well-posedness of the equation system \eqref{CCR}.

\subsection{Fixed point method}
In this subsection, we use a fixed point method to investigate the solvability of \eqref{CCR}.  Let $\Psi_z(t,s)$ and $\Psi_{S}(t,s)$ be the fundamental solution matrices of
\begin{equation}\label{fundamental}
  \dot{y}_1=(A-BR^{-1}B^{\top}\Pi)y_1,\quad \text{and}\quad  \dot{y}_2=-(A^{\top}-\Pi BR^{-1}B^{\top}+{2}{\gamma}\Pi\sigma\sigma^{\top})y_2,
\end{equation}
with $y_1(t), y_2(t)\in \mathbb R^n$ and $\Psi_z(s,s)=\Psi_{S}(s,s)=I_n$. We transform the solvability problem of \eqref{CCR} to a fixed point problem. We write $z_{\alpha}(t)=z(\alpha,t)$ as a function of $(\alpha,t)$ and will derive an equation for $z_{\alpha}$ by eliminating $S_{\alpha}$. Let $C([0,1]\times [0,T]; \mathbb R^n)$ be the function space consisting of continuous $\mathbb R^n$-valued functions defined on $[0,1]\times [0,T]$ with norm $\|x\|=\sup_{\alpha,t}|x(\alpha,t)|$. Define the linear operator $\Xi$ acting on $\tilde z\in C([0,1]\times [0,T]; \mathbb R^n)$:
\begin{equation*}
  \begin{aligned}
    (\Xi\tilde z)(\alpha,t)&=\int_0^t\Psi_z(t,s)D\int_0^1g(\alpha,\beta)\tilde z(\beta, s)\mathrm{d}\beta\mathrm{d}s+\int_0^t\Psi_z(t,s)BR^{-1}B^{\top}\int_0^1g(\alpha,\beta)\\
    &\times \Big\{\int_s^T\Psi_{S}(s,r)(Q\Gamma- \Pi D) \tilde z(\beta, r)\mathrm{d}r
    -\Psi_{S}(s,T)Q_f\Gamma_f \tilde z(\beta,T) \Big\}\mathrm{d}\beta\mathrm{d}s.
  \end{aligned}
\end{equation*}
It is straightforward to show that $\Xi$ is from $C([0,1]\times [0,T]; \mathbb R^n)$ to itself and is continuous. Then the solvability of the GMFG equation system \eqref{CCR} reduces to finding a solution $\tilde z\in C([0,1]\times [0,T];\mathbb R^n)$ to the following fixed point equation:
\begin{equation*}\label{Eq1}
  \tilde z(\alpha,t)=(\Xi\tilde z)(\alpha,t)+\Psi_{z}(t,0)\int_0^1g(\alpha,\beta)m^x_{\beta}\mathrm{d}\beta.
\end{equation*}
Denote the constants $c_g=\max_{\alpha\in [0,1]}\int_0^1g(\alpha,\beta)\mathrm{d}\beta$, $c_{z}=\max_{0\leq t,s\leq T}\big|\Psi_z(t,s)\big|$ and $c_{S}=\max_{0\leq t,s\leq T}\big|\Psi_S(t,s)\big|$. Let $\|\Xi\|$ denote the norm of $\Xi$. We obtain
\begin{equation*}\label{CXi}
  \begin{aligned}
    \|\Xi\|\leq C_{\Xi}:= c_gc_{z}  |D| T +c_gc_{z}c_{S}|BR^{-1}B^{\top}|\cdot (|Q\Gamma- \Pi D|T+|Q_f\Gamma_f|)T.
  \end{aligned}
\end{equation*}
We have the following solvability result.
\begin{theorem}\label{thm2.4}
  Suppose (H1)-(H5) hold. If $C_{\Xi}<1$, then the equation system \eqref{CCR} admits a unique solution $(z_{\cdot}(\cdot), S_{\cdot}(\cdot)) \in C([0,1]\times [0,T]; \mathbb R^n)\times C([0,1]\times [0,T]; \mathbb R^n)$. Furthermore, ODE \eqref{rho} also admits a unique solution $r_{\cdot}( \cdot)\in C([0,1]\times [0,T]; \mathbb R)$.
\end{theorem}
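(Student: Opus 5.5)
We will prove Theorem \ref{thm2.4} by the Banach fixed point theorem applied to the affine map
\[
\mathcal T\tilde z:=\Xi\tilde z+\Psi_z(\cdot,0)\int_0^1 g(\cdot,\beta)m^x_\beta\,\mathrm d\beta
\]
on the Banach space $C([0,1]\times[0,T];\mathbb R^n)$ with the sup norm, and then reconstruct $S$ and $r$ from the fixed point.

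\textbf{Step 1: $\mathcal T$ maps the space into itself.} Take $\tilde z\in C([0,1]\times[0,T];\mathbb R^n)$. For each fixed $s$ (and $r$, $T$), the inner functions $\beta\mapsto\tilde z(\beta,s)$ and $\beta\mapsto\int_s^T\Psi_S(s,r)(Q\Gamma-\Pi D)\tilde z(\beta,r)\,\mathrm dr-\Psi_S(s,T)Q_f\Gamma_f\tilde z(\beta,T)$ are bounded and measurable. By (H5), the graphon integrals are therefore continuous in $\alpha$ for each $s$, and they are bounded uniformly by $c_g$ times sup norms. Dominated convergence then gives joint continuity in $(\alpha,t)$ of the outer time integrals: continuity in $t$ follows from continuity of $\Psi_z$ and absolute continuity of the integral. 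The same reasoning, using (H1), handles the inhomogeneous term $\Psi_z(t,0)\int_0^1 g(\alpha,\beta)m^x_\beta\,\mathrm d\beta$.

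\textbf{Step 2: contraction.} Linearity of $\Xi$ gives $\|\mathcal T\tilde z_1-\mathcal T\tilde z_2\|=\|\Xi(\tilde z_1-\tilde z_2)\|$. Bounding term by term with $g\ge0$, $\int_0^1 g(\alpha,\beta)\,\mathrm d\beta\le c_g$, and the bounds on $\Psi_z$ and $\Psi_S$ yields $\|\Xi\|\le C_\Xi<1$. Banach's theorem then gives a unique fixed point $z$.

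\textbf{Step 3: equivalence with \eqref{CCR}.} Given the fixed point $z$, define $S_\alpha$ by the variation of constants formula for the backward equation:
\[
S_\alpha(t)=-\Psi_S(t,T)Q_f\Gamma_f z_\alpha(T)-\int_t^T\Psi_S(t,r)(Q\Gamma-\Pi D)z_\alpha(r)\,\mathrm dr.
\]
This $S_\alpha$ is continuous in $(\alpha,t)$ and solves the second equation of \eqref{CCR} with the terminal condition. Substituting it into the forward variation of constants formula, the fixed point identity is exactly the integral form of the first equation, so $(z,S)$ solves \eqref{CCR}.

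Conversely, any continuous solution $(z,S)$ of \eqref{CCR} has $S$ given by the formula above, and eliminating $S$ shows that $z$ is a fixed point of $\mathcal T$. Uniqueness of the fixed point therefore gives uniqueness of $(z,S)$. The main point of care in this step is the sign bookkeeping when checking that the $-BR^{-1}B^\top\int_0^1 g S_\beta$ term matches $\Xi$ as written.

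\textbf{Step 4: the function $r$.} The ODE \eqref{rho} is linear in $r_\alpha$, with a forcing term that is continuous in $(\alpha,t)$ and built from $S_\alpha$ and $z_\alpha$. Explicit integration from the terminal condition gives a unique $r_\alpha$, continuous in $(\alpha,t)$.

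The main technical obstacle is Step 1. Assumption (H5) only gives continuity in $\alpha$ pointwise in $s$, and this has to be upgraded to joint continuity of the time integrals. Dominated convergence together with the uniform bound $c_g\|\tilde z\|$ is expected to suffice.
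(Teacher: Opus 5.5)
Your proposal is correct and follows the paper's own route: Banach's fixed point theorem for the affine map $\tilde z\mapsto\Xi\tilde z+\Psi_z(\cdot,0)\int_0^1 g(\cdot,\beta)m^x_\beta\,\mathrm d\beta$ via $\|\Xi\|\le C_\Xi<1$, then recovering $S_\alpha$ from the backward equation and $r_\alpha$ by direct integration, with you supplying the continuity details the paper leaves implicit. On the sign bookkeeping you flagged, carrying it out with your formula $S_\beta(s)=-\Psi_S(s,T)Q_f\Gamma_f z_\beta(T)-\int_s^T\Psi_S(s,r)(Q\Gamma-\Pi D)z_\beta(r)\,\mathrm dr$ shows that the terminal term inside the braces of $\Xi$ should read $+\Psi_S(s,T)Q_f\Gamma_f\tilde z(\beta,T)$ rather than the printed minus sign; this typo does not affect the bound $C_\Xi$, so your argument goes through with the corrected operator.
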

\begin{proof}
    Under the contraction condition, we obtain a unique solution $z$ from \eqref{Eq1}, which in turns determines a unique solution $S_{\alpha}$ for \eqref{Sigma1}, and next $r_{\alpha}$ for \eqref{rho}.
\end{proof}
\begin{remark}
  The contraction condition in the fixed point method may be conservative and typically holds on small time intervals $[0,T]$.
\end{remark}

\subsection{The method of continuity}
This part analyzes solvability by the continuity method under a dominance-monotonicity condition. We need to formulate such a condition for operators acting on an infinite dimensional space. We follow \cite{GCH2023} to introduce some notation.  Define $L^2[0,1]$ as the space of all square integrable functions from the vertex set  $[0,1]$ to $\mathbb R$. Let $(L^2[0,1])^n:=L^2[0,1]\times \cdots\times L^2[0,1]$ with inner product  $\langle {\bf a}, {\bf b}\rangle :=\sum_{i=1}^n\int_0^1{\bf a}_i(\alpha){\bf b}_i(\alpha)\mathrm{d}\alpha=\int_0^1\langle {\bf a}(\alpha), {\bf b}(\alpha)\rangle\mathrm{d}\alpha$, for ${\bf a}, {\bf b}\in (L^2[0,1])^n$, where ${\bf a}_i \in L^2[0,1]$ is the $i$-th component of ${\bf a}$, and ${\bf a}(\alpha)=({\bf a}_1(\alpha),\cdots, {\bf a}_n(\alpha))^{\top}\in \mathbb R^n$  for a fixed index $\alpha\in [0,1]$. The space $(L^2[0,1])^n$ with the above inner produce is a Hilbert space with norm $\|{\bf a}\|_2=(\int_0^1|{\bf a}(\alpha)|^2\mathrm{d}\alpha)^{\frac{1}{2}}$. Let $\mathcal L((L^2[0,1])^n)$ represent the space of bounded linear operators from $(L^2[0,1])^n$ to $(L^2[0,1])^n$ with operator norm $\|\cdot\|_{\rm op}$. Let  $L^2([0,T];(L^2[0,1])^n)$ stand for the Hilbert space of equivalent classes of strongly measurable (in the B\"ochner sense \cite[p. 103]{S1997}) mappings from $[0,T]$ to $(L^2[0,1])^n$ with norm $\|{\bf d}\|_{L^2([0,T];(L^2[0,1])^n)}=(\int_0^T\|{\bf d}(t)\|_2^2\mathrm{d}t)^{\frac{1}{2}}$, and $C([0,T];(L^2[0,1])^n)$ for the Banach space of all continuous mappings from $[0,T]$ to $(L^2[0,1])^n$ with norm
 \begin{equation}\label{Eq2}
     \|{\bf d}\|_C=\sup_{0\leq t\leq T}\|{\bf d}(t)\|_2.
 \end{equation}

A graphon ${\bf G}\in {\bf G}_0^{\rm sp}$ defines a self-adjoint bounded linear operator in $\mathcal L(L^2[0,1])$ as follows (see \cite[p. 124]{L2012}; we use the notation ${\bf G}$ rather than $g$ to emphasize that it is an operator):
\begin{equation}\label{operatorG}
  ({\bf G}{\bf a}_i)(\alpha)=\int_0^1 {\bf G}(\alpha,\beta){\bf a}_i(\beta)\mathrm{d}\beta,\quad \forall \alpha\in [0,1],
\end{equation}
where ${\bf Ga}_i\in L^2[0,1]$. Moreover, graphons can be associated with operators from $(L^2[0,1])^n$ to $(L^2[0,1])^n$. We use the square bracket $[\cdot]$ to indicate that the operator is in $\mathcal L((L^2[0,1])^n)$. For any $F\in \mathbb R^{n\times n}$ and ${\bf G}\in {\bf G}_0^{\rm sp}$, the operator $[F{\bf G}]$ is defined as $([F{\bf G}]{\bf a})(\alpha):= F(({\bf G}{\bf a}_1)(\alpha), \cdots, ({\bf G}{\bf a}_n)(\alpha))^{\top}$, which lies in $\mathbb R^n$ for a given $\alpha$. Since $[F{\bf G}]\in \mathcal L((L^2[0,1])^n)$, it generates a uniformly continuous (hence strongly continuous) semigroup given by $S_{[F{\bf G}]}(t)=\exp(t[F{\bf G}]):=\sum_{j=0}^{\infty}\frac{1}{j!}t^j[F{\bf G}]^j$, $t\geq 0$ \cite[pp. 1-2]{P2012}. Additionally, $\mathbb I\in \mathcal L((L^2[0,1])^n)$ denotes the identity operator satisfying $([F\mathbb I]{\bf a})(\alpha):=F({\bf a}_1(\alpha), \cdots, {\bf a}_n(\alpha))^{\top}=F{\bf a}(\alpha)\in \mathbb R^n.$

Now, we rewrite the equation system \eqref{CCR} in the following operator form:
\begin{equation}\label{CC}
  \left\{
  \begin{aligned}
    &\dot{\bf z}=\big(\mathbb A+[D{\bf G}]\big){\bf z}-[ BR^{-1}B^{\top}{\bf G}]{\bf S},\quad {\bf z}(0)=\int_0^1{\bf G}(\cdot,\beta)m^x_{\beta}\mathrm{d}\beta,\\
    &\dot{\bf S}=-\big(\mathbb A^{\top}+[{2}{\gamma }\Pi\sigma\sigma^{\top}\mathbb I]\big){\bf S}+[(Q\Gamma-\Pi D)\mathbb I]{\bf z}, \quad {\bf S}(T)=-[Q_f\Gamma_f \mathbb I]{\bf z}(T),
  \end{aligned}
  \right.
\end{equation}
where $\mathbb A=[(A-BR^{-1}B^{\top}\Pi)\mathbb I]$. In order to investigate the well-posedness of the forward-backward equation system \eqref{CC}, we introduce the  dominance-monotonicity condition:

\textbf{(H6)} ({Dominance-monotonicity condition}). There exist two constants $\mu\geq 0$, $\nu\geq 0$, a matrix $M\in \mathbb R^{\bar n\times n}$, and two given matrix valued functions $ F, H \in L^{\infty}([0,T];\mathbb R^{\tilde n\times n})$ (for some $\bar n, \tilde n\in \mathbb N$) such that the following conditions are fulfilled:

\noindent (i) One of these two cases holds: Case (A) $\mu>0$ and $\nu=0$; Case (B) $\mu=0$ and $\nu>0$.

\noindent (ii) (Dominance condition). For all $t\in [0,T]$ appearing in $B(t), R(t), D(t), Q(t), \Pi(t)$, $ H(t), F(t)$ and all ${\bf z}$, ${\bf S}\in (L^2[0,1])^n$,
\begin{equation*}\left\{
    \begin{aligned}
        &\| [BR^{-1}B^{\top}{\bf G}] {\bf S}\|_2  \leq \frac{1}{\mu} \|[H\mathbb I]{\bf S}\|_2, \quad \|[(Q\Gamma-\Pi D)\mathbb I]{\bf z}\|_2\leq \frac{1}{\nu}\|[F\mathbb I]{\bf z}\|_2,\\
        &\|[Q_f\Gamma_f\mathbb I]{\bf z}\|_2\leq \frac{1}{\nu}\|[M\mathbb I]{\bf z}\|_2.
    \end{aligned}\right.
\end{equation*}

\noindent (iii) (Monotonicity condition). For all $t\in [0,T]$ appearing in $B(t), R(t), D(t), Q(t)$, $ \Pi(t),  H(t), F(t)$ and all ${\bf z}$, ${\bf S}\in (L^2[0,1])^n$,
\begin{equation*}\left\{
  \begin{aligned}
  &\langle [D{\bf G}]{\bf z}, {\bf S}\rangle -\langle [BR^{-1}B^{\top}{\bf G}]{\bf S}, {\bf S}\rangle -\langle [{2}{\gamma }\Pi\sigma\sigma^{\top}\mathbb I]{\bf S}, {\bf z}\rangle +\langle [(Q\Gamma- \Pi D)\mathbb I]{\bf z}, {\bf z}\rangle\\
  &\qquad\qquad\leq -\nu \|{[F\mathbb I]}{\bf z}\|_2^2-\mu \|{ [H\mathbb I]}{\bf S}\|_2^2,\\
    &-\langle [Q_f\Gamma_f \mathbb I]  {\bf z},  {\bf z} \rangle\geq \nu \|[M\mathbb I] {\bf z}\|_2^2.
  \end{aligned}\right.
\end{equation*}
We adopt the following convention for condition (ii). When $\mu=0$ (resp., $\nu =0$), $\frac{1}{\mu}$ (resp., $\frac{1}{\nu}$) means $\infty$. That is, if $\mu=0$ or $\nu=0$, then the corresponding dominance condition becomes superfluous and is dropped.
\begin{remark}
  In fact, the case with $\mu>0$ and $\nu>0$ is covered by both Case (A) and Case (B).
\end{remark}

For any ${\bf m}, {\bf \bar m}\in L^2([0,T];(L^2[0,1])^n)$ and $\boldsymbol{\eta}\in (L^2[0,1])^n$, we introduce a family of forward-backward equation systems parameterized by $\kappa\in [0,1]$:
\begin{equation}\label{kappa}
  \left\{
  \begin{aligned}
    &\dot{{\bf z}}^{\kappa}\!=\!\kappa \big\{\big(\mathbb A\!+\![D{\bf G}]\big){\bf z}^{\kappa}\!-\![ BR^{-1}B^{\top}{\bf G}]{\bf S}^{\kappa}\big\}\!-\!(1\!-\!\kappa)\mu [{H^{\top}H}\mathbb I]{\bf S}^{\kappa}\!+\!{\bf m},\\
    &\dot{\bf S}^{\kappa}\!=\!\kappa \big\{\!-\!(\mathbb A^{\top}\!+\![{2}{\gamma }\Pi\sigma\sigma^{\top}\mathbb I]){\bf S}^{\kappa}\!+\![(Q\Gamma\!-\!\Pi D)\mathbb I]{\bf z}^{\kappa}\big\}\!-\!(1\!-\!\kappa)\nu[{F^{\top}F}\mathbb I] {\bf z}^{\kappa}\!+\!{\bf\bar m},\\
    &{\bf z}^{\kappa}(0)\!=\!\!\!\int_0^1\!{\bf G}(\cdot,\beta)m^x_{\beta}\mathrm{d}\beta,\!\quad\! {\bf S}^{\kappa}(T)\!=\!-\kappa[Q_f\Gamma_f \mathbb I]{\bf z}^{\kappa}(T)\!+\!(1\!-\!\kappa)\nu [{M^{\top}M}\mathbb I]{\bf z}^{\kappa}(T)\!+\!\boldsymbol{\eta}.
  \end{aligned}
  \right.
\end{equation}
We will show that for each $\kappa\in [0,1]$, the equation system \eqref{kappa} admits a unique solution $({\bf z}^{\kappa},{\bf S}^{\kappa})\in C([0,T]; (L^2[0,1])^n)\times C([0,T];(L^2[0,1])^n)$. It is easy to see that when $\kappa=1$ and $({\bf m}, {\bf\bar m}, \boldsymbol{\eta})=({\bf 0},{\bf 0}, {\bf 0} )$, equation system \eqref{kappa} reduces to \eqref{CC}. Moreover, when $\kappa=0$, \eqref{kappa} becomes
\begin{equation}\label{kappa=0}
  \left\{
  \begin{aligned}
    &\dot{{\bf z}}^{0}= -\mu [{ H^{\top}H}\mathbb I]{\bf S}^{0}+{\bf m},\quad {\bf z}^{0}(0)=\int_0^1{\bf G}(\cdot,\beta)m^x_{\beta}\mathrm{d}\beta,\\
    &\dot{\bf S}^{0}=- \nu [{F^{\top}F}\mathbb I]{\bf z}^{0}+{\bf\bar m}, \quad {\bf S}^{0}(T)=v[{M^{\top}M}\mathbb I]{\bf z}^{0}(T)+\boldsymbol{\eta}.
  \end{aligned}
  \right.
\end{equation}
Now, we investigate the existence and uniqueness of a solution to the forward-backward equation \eqref{kappa=0} in two cases. For Case (A) (i.e., $\mu>0$ and $\nu=0$),  \eqref{kappa=0} is in a decoupled form, for which we can solve the backward equation first to get ${\bf S}^0$, and next obtain ${\bf z}^0$. For Case (B) (i.e., $\mu=0$ and $\nu>0$), \eqref{kappa=0} is also in a decoupled form and easily solved. Thus, under (H6), \eqref{kappa=0} admits a unique solution $({\bf z}^0, {\bf S}^0)\in C([0,T];(L^2[0,1])^n)\times C([0,T];(L^2[0,1])^n)$.

The method of continuity works by showing that whenever \eqref{kappa} admits a unique solution for some $\kappa_0\in [0,1)$, then unique solvability still holds for all $\kappa$ slightly large than $\kappa_0$, allowing to cover up to $\kappa=1$. To develop the continuity arguments, we first give the following a priori estimate of the solution to \eqref{kappa}.

\begin{lemma}\label{lem91221}
  Under (H1)-(H6), suppose that $({\bf z}^{\kappa}_j, {\bf S}_j^{\kappa})\in C([0,T];(L^2[0,1])^n)\times C([0,T]$; $(L^2[0,1])^n)$, $j=1,2$, are solutions of \eqref{kappa} corresponding to $({\bf m}_j, {\bf \bar m}_j, \boldsymbol{\eta}_j)$, respectively. Then one has the following estimate
  \begin{equation*}
    \begin{aligned}
      \|\delta{\bf z}^{\kappa} \|^2_C+\|\delta{\bf S}^{\kappa} \|^2_C\leq K\Big[ \|\delta\boldsymbol{\eta}\|^2_{2}+  \int_0^T\|\delta{\bf m}(t) \|_{2}^2\mathrm{d}t   +  \int_0^T\|\delta{\bf \bar m}(t) \|^2_{2}\mathrm{d}t   \Big],
    \end{aligned}
  \end{equation*}
  where the norm $\|\cdot\|_C$ is defined in \eqref{Eq2}. Here, $\delta {\bf z}^{\kappa}={\bf z}^{\kappa}_1-{\bf z}^{\kappa}_2$ and other terms are similarly defined. $K$ is a positive constant depending on $T$, $\mu$, $\nu$ and the upper bounds of all coefficients in \eqref{state} and \eqref{cost}, $\Pi$, the norm of $H$ $F$ and $M$, and $\|{\bf G}\|_{\rm op}$.
\end{lemma}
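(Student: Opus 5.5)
The plan is the standard duality estimate of Hu--Peng/Peng--Wu type for monotone FBSDEs, carried out in the Hilbert space $(L^2[0,1])^n$. The differences $(\delta{\bf z}^{\kappa},\delta{\bf S}^{\kappa})$ solve the linear system \eqref{kappa} with data $(\delta{\bf m},\delta{\bf \bar m},\delta\boldsymbol{\eta})$ and zero initial condition $\delta{\bf z}^{\kappa}(0)=0$. I will differentiate the pairing $\langle \delta{\bf z}^{\kappa}(t),\delta{\bf S}^{\kappa}(t)\rangle$ in $t$. This is legitimate because all operators involved are bounded on $(L^2[0,1])^n$, so the solutions are absolutely continuous in $C([0,T];(L^2[0,1])^n)$. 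Then I integrate over $[0,T]$.

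\textbf{Step 1: the duality identity.} The $\mathbb A$ and $\mathbb A^{\top}$ terms cancel, since $\langle \mathbb A{\bf z},{\bf S}\rangle=\langle {\bf z},\mathbb A^{\top}{\bf S}\rangle$ pointwise in $\alpha$. The $\kappa$-part of the integrand is then exactly the left-hand side of the monotonicity condition in (H6)(iii), evaluated at $(\delta{\bf z},\delta{\bf S})$. Here one uses that $[2\gamma\Pi\sigma\sigma^{\top}\mathbb I]$ is self-adjoint, so the order of its arguments in the pairing is immaterial. The $(1-\kappa)$-part contributes $-(1-\kappa)\mu\|[H\mathbb I]\delta{\bf S}\|_2^2-(1-\kappa)\nu\|[F\mathbb I]\delta{\bf z}\|_2^2$. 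At $t=T$, the term $\langle\delta{\bf z}(T),\delta{\bf S}(T)\rangle$ splits into $-\kappa\langle[Q_f\Gamma_f\mathbb I]\delta{\bf z},\delta{\bf z}\rangle+(1-\kappa)\nu\|[M\mathbb I]\delta{\bf z}(T)\|_2^2+\langle\delta{\bf z}(T),\delta\boldsymbol\eta\rangle$. Applying (H6)(iii) yields
\begin{equation*}
\nu\|[M\mathbb I]\delta{\bf z}(T)\|_2^2+\int_0^T\big(\nu\|[F\mathbb I]\delta{\bf z}\|_2^2+\mu\|[H\mathbb I]\delta{\bf S}\|_2^2\big)\mathrm{d}t\le -\langle\delta{\bf z}(T),\delta\boldsymbol\eta\rangle+\int_0^T\big(\langle\delta{\bf m},\delta{\bf S}\rangle+\langle\delta{\bf z},\delta{\bf \bar m}\rangle\big)\mathrm{d}t.
\end{equation*}
The sign bookkeeping here must be checked carefully, because the boundary term enters with a sign opposite to that of the integral.

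\textbf{Step 2: Case (A), $\mu>0$, $\nu=0$.} Standard Gronwall estimates on the forward equation give $\|\delta{\bf z}\|_C^2\le K\int_0^T(\|[H\mathbb I]\delta{\bf S}\|_2^2+\|\delta{\bf m}\|_2^2)\mathrm{d}t$. To get this, bound $\kappa[BR^{-1}B^{\top}{\bf G}]\delta{\bf S}$ by the dominance condition, and note that $(1-\kappa)\mu[H^{\top}H\mathbb I]\delta{\bf S}$ is controlled by $\|[H\mathbb I]\delta{\bf S}\|_2$ times $|H|$. Similarly, a backward Gronwall estimate gives $\|\delta{\bf S}\|_C^2\le K(\|\delta{\bf z}(T)\|_2^2+\|\delta\boldsymbol\eta\|_2^2+\int_0^T(\|\delta{\bf z}\|_2^2+\|\delta{\bf \bar m}\|_2^2)\mathrm{d}t)$. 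Insert the first bound into the right side of Step 1 and use Young's inequality with a small $\epsilon$, absorbing the $\delta{\bf z}$ terms. For the $\langle\delta{\bf m},\delta{\bf S}\rangle$ term, substitute the $\delta{\bf S}$ bound and use that $\|\delta{\bf z}\|_C$ is in turn controlled. This closes into a bound on $\int_0^T\|[H\mathbb I]\delta{\bf S}\|_2^2\mathrm{d}t$ by the data plus $\epsilon(\|\delta{\bf z}\|_C^2+\|\delta{\bf S}\|_C^2)$. Chaining the three inequalities and choosing $\epsilon$ small gives the claim.

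\textbf{Step 3: Case (B), $\mu=0$, $\nu>0$.} The roles are reversed. First estimate $\delta{\bf S}$ backward via Gronwall in terms of $\|[M\mathbb I]\delta{\bf z}(T)\|_2$, $\int\|[F\mathbb I]\delta{\bf z}\|_2^2$, $\delta\boldsymbol\eta$ and $\delta{\bf \bar m}$, using the dominance bounds on $[Q_f\Gamma_f\mathbb I]$ and $[(Q\Gamma-\Pi D)\mathbb I]$. Then estimate $\delta{\bf z}$ forward in terms of $\delta{\bf S}$ and $\delta{\bf m}$. Feeding these into Step 1 with Young's inequality controls the left side of Step 1 by the data, and the conclusion follows.

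The main obstacle is the absorption step. It requires ensuring that constants such as $\|{\bf G}\|_{\rm op}$ and the coefficient bounds enter only the factor $K$ and never the dissipative terms. It also requires checking that the Gronwall estimates are uniform in $\kappa\in[0,1]$, which holds because every $\kappa$-dependent coefficient is bounded by its $\kappa=1$ or $\kappa=0$ counterpart.
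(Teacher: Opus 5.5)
Your proposal is correct and follows essentially the same route as the paper: the duality identity for $\langle\delta{\bf z}^{\kappa},\delta{\bf S}^{\kappa}\rangle$ combined with (H6)(iii), forward and backward Gronwall bounds using the dominance condition (H6)(ii), and absorption via Young's inequality, carried out separately for Case (A) and Case (B). One small correction: $[2\gamma\Pi\sigma\sigma^{\top}\mathbb I]$ need not be self-adjoint, since $\Pi\sigma\sigma^{\top}$ is generally not symmetric, but you do not need this, because $\langle\delta{\bf z},X\delta{\bf S}\rangle=\langle X\delta{\bf S},\delta{\bf z}\rangle$ holds for any operator $X$ by symmetry of the real inner product.
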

\begin{proof}
   We present the proofs for Case (A) and Case (B) separately. For Case (A) (i.e., $\mu>0$ and $\nu=0$), according to \eqref{kappa} and (H6)-(ii), it holds that
  \begin{equation*}
    \begin{aligned}
      \|\delta {\bf z}^{\kappa}\|^2_C&\leq K_1\Big( \int_0^T\|\delta{\bf m}(t) \|^2_{2}\mathrm{d}t   +\int_0^T\|[{H}\mathbb I]\delta{\bf S}^{\kappa}(t)\|_2^2\mathrm{d}t\Big),\\
      \|\delta {\bf S}^{\kappa} \|_C^2
     &\leq K_1\Big( \int_0^T\|\delta{\bf \bar m}(t) \|_{2}^2\mathrm{d}t + \|\delta {\bf z}^{\kappa} \|_C^2+\|\delta\boldsymbol{\eta}\|^2_{2}\Big),
    \end{aligned}
  \end{equation*}
  where $K_1>0$ is a constant. Eliminating $\|\delta {\bf z}^{\kappa}\|_C^2$ in the second inequality, we obtain
  \begin{equation}\label{7}
    \begin{aligned}
      \!\|\delta{\bf z}^{\kappa}\|_C^2\!+\!\|\delta{\bf S}^{\kappa}\|_C^2 \!\leq\! K_2\Big[\|\delta\boldsymbol{\eta}\|^2_{2}\!+\!  \int_0^T\!\!\Big\{\|\delta{\bf m}(t) \|^2_{2}\!+\!\|\delta{\bf \bar m}(t) \|^2_{2}\!+\!\|[{ H}\mathbb I]\delta{\bf S}^{\kappa}(t)\|_2^2\Big\}\mathrm{d}t\Big],
    \end{aligned}
  \end{equation}
  where $K_2>0$ is a constant.   Recalling \eqref{kappa}, we derive the equation of $(\delta{\bf z}^{\kappa}, \delta{\bf S}^{\kappa})$. Taking the differential of $\langle \delta{\bf z}^{\kappa}, \delta{\bf S}^{\kappa}\rangle$ and integrating both sides over $[0,T]$, we then use the monotonicity condition (H6)-(iii) to obtain
    \begin{equation}\label{821}
    \begin{aligned}
      &\langle \delta {\bf z}^{\kappa}(T), \delta{\bf S}^{\kappa}(T)\rangle \!-\!\langle \delta {\bf z}^{\kappa}(0), \delta{\bf S}^{\kappa}(0)\rangle \\
      &\!\!=\!\!\int_0^T\!\!\!\Big\{\big\langle \delta {\bf z}^{\kappa}, \kappa [-(\mathbb A^{\top}\!+\![{2}{\gamma }\Pi\sigma\sigma^{\top}\mathbb I])\delta {\bf S}^{\kappa}\!+\![(Q\Gamma\!-\!D^{\top}\Pi)\mathbb I]\delta {\bf z}^{\kappa}]\!+\!\delta{\bf\bar m}\big\rangle\\
      & \quad \!+\!\big\langle \kappa \big[\big(\mathbb A\!+\![D{\bf G}]\big)\delta{\bf z}^{\kappa}\!-\![ BR^{-1}B^{\top}{\bf G}]\delta{\bf S}^{\kappa}\big]\!-\!(1\!-\!\kappa)\mu [{H^{\top}H}\mathbb I]\delta{\bf S}^{\kappa}\!+\!\delta{\bf m}, \delta{\bf S}^{\kappa}\big\rangle\Big\} \mathrm{d}t\\
      &\!\!\leq \int_0^T\Big(\!-\!\mu \|[{H}\mathbb I]\delta{\bf S}(t)\|_2^2\! +\!\langle \delta {\bf m}(t), \delta {\bf S}^{\kappa}(t)\rangle\! +\!\langle \delta{\bf z}^{\kappa}(t), \delta{\bf\bar m}(t)\rangle\Big) \mathrm{d}t,
    \end{aligned}
  \end{equation}
  and
  \begin{equation}\label{824}\begin{aligned}
      \langle\delta{\bf z}^{\kappa}(T), \delta{\bf S}^{\kappa}(T)\rangle &= \langle \delta{\bf z}^{\kappa}(T), -\kappa[Q_f\Gamma_f \mathbb I]\delta{\bf z}^{\kappa}(T)\!+\!\delta\boldsymbol{\eta}\rangle\geq \langle \delta {\bf z}^{\kappa}(T), \delta \boldsymbol{\eta}\rangle.
  \end{aligned}\end{equation}
  Then we have
  \begin{equation*}
    \begin{aligned}
      &\mu \int_0^T\|[{H}\mathbb I]\delta{\bf S}^{\kappa}(t)\|_2^2\mathrm{d}t\leq \int_0^T\Big(\langle \delta {\bf m}(t), \delta {\bf S}^{\kappa}(t)\rangle +\langle \delta{\bf z}^{\kappa}(t), \delta{\bf\bar m}(t)\rangle\Big) \mathrm{d}t-\langle \delta {\bf z}^{\kappa}(T), \delta \boldsymbol{\eta}\rangle,
    \end{aligned}
  \end{equation*}
  which implies
  \begin{equation}\label{8}
    \begin{aligned}
      &\int_0^T\|[{ H}\mathbb I]\delta{\bf S }^{\kappa}(t)\|_2^2\mathrm{d}t\\
      &\!\leq\!\frac{1}{\mu}\int_0^T\Big(\langle \delta {\bf m}(t), \delta {\bf S}^{\kappa}(t)\rangle \!+\!\langle \delta{\bf z}^{\kappa}(t), \delta{\bf\bar m}(t)\rangle\Big) \mathrm{d}t\!-\!\frac{1}{\mu}\langle \delta {\bf z}^{\kappa}(T), \delta \boldsymbol{\eta}\rangle\\
      &\!\leq\! \frac{1}{\mu} \Big(\|\delta {\bf S}^{\kappa}\|_C\cdot\int_0^T \|\delta {\bf m}(t)\|_2 \mathrm{d}t\! +\!\|\delta{\bf z}^{\kappa}\|_C\cdot \int_0^T\|\delta{\bf\bar m}(t)\|_2  \mathrm{d}t\!+\!\|\delta {\bf z}^{\kappa}\|_C\cdot\|\delta\boldsymbol{\eta}\|_2\Big)\\
      &\!\leq\! \frac{1}{4\mu \theta_1}\Big(\|\delta\boldsymbol{\eta}\|^2_{2}\!+\! T\!\int_0^T\!\Big\{\|\delta{\bf m} (t)\|^2_{2}  \!+\! \|\delta{\bf \bar m}(t) \|^2_{2}\Big\}\mathrm{d}t \Big)  \!+\!\frac{2\theta_1}{\mu}\Big(\|\delta {\bf S}^{\kappa}\|_C^2\!+\!\|\delta{\bf z}^{\kappa}\|_C^2\Big),
    \end{aligned}
  \end{equation}
  where $\theta_1>0$ is arbitrary.
  We now select $\theta_1=\frac{\mu}{4K_2}$, where $K_2$ is from \eqref{7}. Combining \eqref{7} with \eqref{8}, we obtain the desired result.

   For Case (B) (i.e., $\mu=0$ and $\nu>0$), according to \eqref{kappa} and (H6)-(ii), we have
  \begin{equation*}
    \begin{aligned}
      \|\delta {\bf z}^{\kappa}\|^2_C&\leq K_3\Big( \int_0^T\|\delta{\bf m}(t) \|^2_{2}\mathrm{d}t   +\|\delta{\bf S}^{\kappa}\|_C^2\Big),\\
      \|\delta {\bf S}^{\kappa} \|_C^2
     &\leq K_3\Big( \int_0^T\|\delta{\bf \bar m}(t) \|_{2}^2\mathrm{d}t + \int_0^T\|[F\mathbb I]\delta {\bf z}^{\kappa}(t) \|_2^2\mathrm{d}t+\|[M\mathbb I]\delta {\bf z}^{\kappa}(T) \|_2^2+\|\delta\boldsymbol{\eta}\|^2_{2}\Big),
    \end{aligned}
  \end{equation*}
  where $K_3>0$ is a constant. Eliminating $\|\delta {\bf S}^{\kappa}\|_C^2$ in the first inequality, then
  \begin{equation} \label{822}
    \begin{aligned}
      &\|\delta{\bf z}^{\kappa}\|_C^2\!+\!\|\delta{\bf S}^{\kappa}\|_C^2\\
      &\!\leq\! K_4\Big[\|\delta\boldsymbol{\eta}\|^2_{2}\!+\!\!  \int_0^T\!\!\!\Big\{\|\delta{\bf m}(t) \|^2_{2}\!+\!\|\delta{\bf \bar m}(t) \|^2_{2}\!+\!\|[{ F}\mathbb I]\delta{\bf z}^{\kappa}(t)\|_2^2\Big\}\mathrm{d}t\!+\!\|[M\mathbb I]\delta {\bf z}^{\kappa}(T) \|_2^2\Big],
    \end{aligned}
  \end{equation}
  where $K_4>0$ is a constant. Similar to \eqref{821}-\eqref{824}, and using the monotonicity condition (H6)-(iii), we have   
    \begin{equation*}
    \begin{aligned}
      &\langle \delta {\bf z}^{\kappa}(T), \delta{\bf S}^{\kappa}(T)\rangle \!-\!\langle \delta {\bf z}^{\kappa}(0), \delta{\bf S}^{\kappa}(0)\rangle \\
      &\!\!=\!\!\int_0^T\!\!\!\Big\{\big\langle \delta {\bf z}^{\kappa}, \kappa [-(\mathbb A^{\top}\!+\![{2}{\gamma }\Pi\sigma\sigma^{\top}\mathbb I])\delta {\bf S}^{\kappa}\!+\![(Q\Gamma\!-\!D^{\top}\Pi)\mathbb I]\delta {\bf z}^{\kappa}]\!-\!(1\!-\!\kappa)\nu[{F^{\top}F}\mathbb I] \delta{\bf z}^{\kappa}\!+\!\delta{\bf\bar m}\big\rangle\\
      &\quad\quad\!+\!\big\langle \kappa \big[\big(\mathbb A\!+\![D{\bf G}]\big)\delta{\bf z}^{\kappa}\!-\![ BR^{-1}B^{\top}{\bf G}]\delta{\bf S}^{\kappa}\big]\!+\!\delta{\bf m}, \delta{\bf S}^{\kappa}\big\rangle\Big\} \mathrm{d}t\\
      &\!\!\leq \int_0^T\Big(\!-\!\nu\|[{ F}\mathbb I]\delta{\bf z}^{\kappa}(t)\|_2^2\! +\!\langle \delta {\bf m}(t), \delta {\bf S}^{\kappa}(t)\rangle\! +\!\langle \delta{\bf z}^{\kappa}(t), \delta{\bf\bar m}(t)\rangle\Big) \mathrm{d}t,
    \end{aligned}
  \end{equation*}
  and
  \begin{equation*}\begin{aligned}
      \langle\delta{\bf z}^{\kappa}(T), \delta{\bf S}^{\kappa}(T)\rangle &= \langle \delta{\bf z}^{\kappa}(T), -\kappa[Q_f\Gamma_f \mathbb I]\delta{\bf z}^{\kappa}(T)\!+\!(1\!-\!\kappa)\nu [{M^{\top}M}\mathbb I]\delta {\bf z}^{\kappa}(T)\!+\!\delta\boldsymbol{\eta}\rangle \\
      &\geq \nu \|[{M}\mathbb I]\delta{\bf z}^{\kappa}(T)\|_2^2+\langle \delta {\bf z}^{\kappa}(T), \delta \boldsymbol{\eta}\rangle.
  \end{aligned}\end{equation*}
  Then we have
  \begin{equation*}
    \begin{aligned}
      &\nu \|[{M}\mathbb I]\delta{\bf z}^{\kappa}(T)\|_2^2+\nu \int_0^T\|[{F}\mathbb I]\delta{\bf z}^{\kappa}(t)\|_2^2\mathrm{d}t\\
      &\leq \int_0^T\Big(\langle \delta {\bf m}(t), \delta {\bf S}^{\kappa}(t)\rangle +\langle \delta{\bf z}^{\kappa}(t), \delta{\bf\bar m}(t)\rangle\Big) \mathrm{d}t-\langle \delta {\bf z}^{\kappa}(T), \delta \boldsymbol{\eta}\rangle,
    \end{aligned}
  \end{equation*}
  which implies
  \begin{equation} \label{823}
    \begin{aligned}
      &\|[{M}\mathbb I]\delta{\bf z}^{\kappa}(T)\|_2^2+ \int_0^T\|[{F}\mathbb I]\delta{\bf z}^{\kappa}(t)\|_2^2\mathrm{d}t\\
      &\!\leq\!\frac{1}{\nu}\int_0^T\Big(\langle \delta {\bf m}(t), \delta {\bf S}^{\kappa}(t)\rangle \!+\!\langle \delta{\bf z}^{\kappa}(t), \delta{\bf\bar m}(t)\rangle\Big) \mathrm{d}t\!-\!\frac{1}{\nu}\langle \delta {\bf z}^{\kappa}(T), \delta \boldsymbol{\eta}\rangle\\
      &\!\leq\! \frac{1}{\nu} \Big(\|\delta {\bf S}^{\kappa}\|_C\cdot\int_0^T \|\delta {\bf m}(t)\|_2 \mathrm{d}t\! +\!\|\delta{\bf z}^{\kappa}\|_C\cdot \int_0^T\|\delta{\bf\bar m}(t)\|_2  \mathrm{d}t\!+\!\|\delta {\bf z}^{\kappa}\|_C\cdot\|\delta\boldsymbol{\eta}\|_2\Big)\\
      &\!\leq\! \frac{1}{4\nu \theta_2}\Big(\|\delta\boldsymbol{\eta}\|^2_{2}\!+\! T\int_0^T\!\Big\{\|\delta{\bf m} (t)\|^2_{2}   \!+\!   \|\delta{\bf \bar m}(t) \|^2_{2}\Big\}\mathrm{d}t \Big)  \!+\!\frac{2\theta_2}{\nu}\Big(\|\delta {\bf S}^{\kappa}\|_C^2\!+\!\|\delta{\bf z}^{\kappa}\|_C^2\Big),
    \end{aligned}
  \end{equation}
  where $\theta_2>0$ is arbitrary.
  We now select $\theta_2=\frac{\mu}{4K_4}$, where $K_4$ is from \eqref{822}. Combining \eqref{822} with \eqref{823}, we obtain the desired result.
\end{proof}

With the aid of Lemma \ref{lem91221}, we obtain the following lemma.

\begin{lemma}\label{lem2145}
  Under (H1)-(H6), there exists a constant $\delta_0>0$ such that if the equation system \eqref{kappa} is uniquely solvable for some    $\kappa_0\in [0,1)$  whenever $\boldsymbol{\eta}\in (L^2[0,1])^n$ and ${\bf m}$, ${\bf \bar m}\in L^2([0,T];(L^2[0,1])^n)$, then the same conclusion holds for all
  $\kappa \in [\kappa_0,(\kappa_0$ $+\delta_0)\wedge 1]$, where $\delta_0$ does not depend on $\kappa_0$.
\end{lemma}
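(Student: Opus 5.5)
We follow the standard continuation argument of Hu--Peng and Peng--Wu, now in the Hilbert space $(L^2[0,1])^n$, with Lemma \ref{lem91221} supplying the needed a priori estimate. Fix $\kappa_0$ such that \eqref{kappa} is uniquely solvable for every triple of data. Take $\kappa=\kappa_0+\delta$ with $\delta\in(0,\delta_0]$ and arbitrary data $(\boldsymbol\eta,{\bf m},{\bf\bar m})$. Rewrite system \eqref{kappa} at level $\kappa$ as the system at level $\kappa_0$ with modified data:
\begin{equation*}
\begin{aligned}
{\bf m}' &= {\bf m}+\delta\big\{(\mathbb A+[D{\bf G}]){\bf z}-[BR^{-1}B^{\top}{\bf G}]{\bf S}+\mu[H^{\top}H\mathbb I]{\bf S}\big\},\\
{\bf\bar m}' &= {\bf\bar m}+\delta\big\{-(\mathbb A^{\top}+[2\gamma\Pi\sigma\sigma^{\top}\mathbb I]){\bf S}+[(Q\Gamma-\Pi D)\mathbb I]{\bf z}+\nu[F^{\top}F\mathbb I]{\bf z}\big\},\\
\boldsymbol\eta' &= \boldsymbol\eta+\delta\big\{-[Q_f\Gamma_f\mathbb I]{\bf z}(T)-\nu[M^{\top}M\mathbb I]{\bf z}(T)\big\}.
\end{aligned}
\end{equation*}
Here $({\bf z},{\bf S})$ is an input pair in $C([0,T];(L^2[0,1])^n)^2$.

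Define the map $\Phi$ on the Banach space $\mathcal B=C([0,T];(L^2[0,1])^n)\times C([0,T];(L^2[0,1])^n)$, equipped with the norm $\|{\bf z}\|_C^2+\|{\bf S}\|_C^2$. Given $({\bf z},{\bf S})\in\mathcal B$, the modified data are well defined. Indeed, all the operators involved ($\mathbb A$, $[D{\bf G}]$, $[BR^{-1}B^{\top}{\bf G}]$, $[H^{\top}H\mathbb I]$, and so on) are bounded on $(L^2[0,1])^n$, uniformly in $t$. Their bounds come from (H2)--(H3), the boundedness of $\Pi$, $H$, $F$ and $M$, and $\|{\bf G}\|_{\rm op}$. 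Hence ${\bf m}',{\bf\bar m}'\in L^2([0,T];(L^2[0,1])^n)$ and $\boldsymbol\eta'\in(L^2[0,1])^n$. By the hypothesis at $\kappa_0$, there is a unique solution $\Phi({\bf z},{\bf S})\in\mathcal B$ of \eqref{kappa} at level $\kappa_0$ with data $(\boldsymbol\eta',{\bf m}',{\bf\bar m}')$. By construction, fixed points of $\Phi$ are exactly the solutions of \eqref{kappa} at level $\kappa$. Conversely, any solution at level $\kappa$ is a fixed point, so uniqueness at $\kappa$ follows from uniqueness of the fixed point.

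For the contraction estimate, take two inputs $({\bf z}_j,{\bf S}_j)$, $j=1,2$, and apply Lemma \ref{lem91221} at level $\kappa_0$ to the corresponding outputs. The original data cancel in the differences, and the operator bounds give
\begin{equation*}
\|\delta\Phi\|^2\le K\delta^2\Big[C_1\|\delta{\bf z}(T)\|_2^2+C_2\int_0^T\big(\|\delta{\bf z}(t)\|_2^2+\|\delta{\bf S}(t)\|_2^2\big)\mathrm{d}t\Big]\le \bar K\delta^2\big(\|\delta{\bf z}\|_C^2+\|\delta{\bf S}\|_C^2\big).
\end{equation*}
Here $\bar K$ depends only on $K$ from Lemma \ref{lem91221}, on $T$, $\mu$, $\nu$, and on the coefficient bounds; it does not depend on $\kappa_0$, because the constant $K$ in Lemma \ref{lem91221} is uniform in $\kappa$. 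Choose $\delta_0=1/(2\sqrt{\bar K})$. Then $\Phi$ is a contraction with ratio at most $1/2$ for every $\delta\le\delta_0$. Banach's fixed point theorem now yields a unique solution at level $\kappa$, for all $\kappa\in[\kappa_0,(\kappa_0+\delta_0)\wedge1]$.

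The main obstacle is to make sure that the a priori estimate is genuinely uniform in $\kappa\in[0,1]$, so that $\delta_0$ is independent of $\kappa_0$. Checking Lemma \ref{lem91221}, the monotonicity computations use only $\kappa\in[0,1]$ and $1-\kappa\ge0$. In particular, $\kappa(-\langle[Q_f\Gamma_f\mathbb I]\cdot,\cdot\rangle)\ge\kappa\nu\|[M\mathbb I]\cdot\|_2^2$ combines with the $(1-\kappa)\nu$ term into a bound independent of $\kappa$. The dominance bounds are likewise uniform, so $K$ is uniform in $\kappa$. A secondary point is to verify that $\Phi$ maps into continuous $(L^2[0,1])^n$-valued paths; this holds because the solutions are defined through bounded-operator ODEs with $L^2$-in-time forcing.
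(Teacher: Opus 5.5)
Your proposal is correct and follows the paper's argument essentially step for step. You rewrite the level-$(\kappa_0+\delta)$ system as the level-$\kappa_0$ system with data perturbed by $\delta$ times the coupling terms, define the solution map on $C([0,T];(L^2[0,1])^n)\times C([0,T];(L^2[0,1])^n)$, use the $\kappa$-uniform a priori estimate of Lemma \ref{lem91221} to get a Lipschitz constant of order $\delta$, and take $\delta_0=1/(2\sqrt{\bar K})$ for a Banach contraction. Your perturbation term $\delta\nu[F^{\top}F\mathbb I]{\bf z}$ in ${\bf\bar m}'$ is the correct one; the paper's display of ${\bf\bar n}$ writes ${\bf S}$ there, which is a typo.
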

\begin{proof}
    Take any $\bar\delta\in (0, \delta_0]$, where $\delta_0>0$ will be selected later. For any $({\bf z}, {\bf S})\in C([0,T];(L^2[0,1])^n)\times C([0,T];(L^2[0,1])^n)$, consider the following forward-backward equation system
    \begin{equation}\label{kappa0}
      \left\{
      \begin{aligned}
        &{\bf\dot{\bf\tilde z}}\!=\!\kappa_0 \big[\big(\mathbb A\!+\![D{\bf G}]\big){\bf\tilde z}\!-\![ BR^{-1}B^{\top}{\bf G}]{\bf\tilde S}\big]\!-\!(1\!-\!\kappa_0)\mu[{ H^{\top}H}\mathbb I] {\bf\tilde S}\!+\!{\bf n},\\
        &{\bf \dot{\bf\tilde S}}\!=\!\kappa_0 \big[\!-\!(\mathbb A^{\top}\!+\![{2}{\gamma }\Pi\sigma\sigma^{\top}\mathbb I]){\bf \tilde S}\! +\![(Q\Gamma\!-\!\Pi D)\mathbb I]{\bf \tilde z} \big]\!-\!(1\!-\!\kappa_0)\nu [{F^{\top}F}\mathbb I] {\bf \tilde z}\!+\!{\bf\bar n},\\
    &{\bf\tilde z} (0)\!=\!\int_0^1{\bf G}(\cdot,\beta)m^x_{\beta}\mathrm{d}\beta,\quad {\bf\tilde S} (T)\!=\!-\kappa_0[Q_f\Gamma_f \mathbb I]{\bf\tilde z} (T)\!+\!(1\!-\!\kappa_0)\nu [{M^{\top}M}\mathbb I] {\bf\tilde z}(T)\!+\!\boldsymbol{\bar\eta},
      \end{aligned}
      \right.
    \end{equation}
    where 
    \begin{equation*}
      \begin{aligned}
        &{\bf n}={\bf m}+\bar\delta \big[\big(\mathbb A+[D{\bf G}]\big){\bf  z}-[ BR^{-1}B^{\top}{\bf G}]{\bf S}\big]+\bar\delta\mu[{ H^{\top}H}\mathbb I]  {\bf S},\\
        &{\bf \bar n}={\bf \bar m}+\bar\delta\big [-(\mathbb A^{\top}+[{2}{\gamma }\Pi\sigma\sigma^{\top}\mathbb I]){\bf S}+[(Q\Gamma-\Pi D)\mathbb I]{\bf z}\big]+\bar\delta\nu[{F^{\top}F}\mathbb I] {\bf S},\\
        &\boldsymbol{\bar\eta}=\boldsymbol{\eta}-\bar\delta[Q_f\Gamma_f \mathbb I]{\bf z} (T)-\bar\delta\nu [{M^{\top}M}\mathbb I] {\bf z}(T).
      \end{aligned}
    \end{equation*}
    Here ${\bf m}$, ${\bf\bar m}$ and $\boldsymbol{\eta}$ take the same form as in \eqref{kappa}. We can easily check that $\boldsymbol{\bar\eta}\in (L^2[0,1])^n$ and ${\bf n}$, ${\bf \bar n}\in L^2([0,T];(L^2[0,1])^n)$. Then by our assumption, \eqref{kappa0} admits a unique solution $({\bf\tilde z}, {\bf\tilde S})\in C([0,T];(L^2[0,1])^n)\times C([0,T];(L^2[0,1])^n)$ for some $\kappa_0\in [0,1)$ whenever $\boldsymbol{\eta}\in (L^2[0,1])^n$ and ${\bf m}$, ${\bf \bar m}\in L^2([0,T];(L^2[0,1])^n)$. Thus we have a well-defined mapping from $({\bf z}, {\bf S})$ to $({\bf \tilde z}, {\bf \tilde S})$, which is denoted by
    \begin{equation*}
      ({\bf \tilde z}, {\bf \tilde S})={\bf I}_{\kappa_0+\bar\delta}({\bf z}, {\bf S}).
    \end{equation*}
    Next, we will show that, if $\bar\delta$ is sufficiently small, the operator ${\bf I}_{\kappa_0+\bar\delta}$ from $C([0,T]$; $(L^2[0,1])^n)\times C([0,T];(L^2[0,1])^n)$ to itself is a contraction. Let $({\bf \tilde z_1}, {\bf \tilde S_1})$ be another solution of \eqref{kappa0}, when $({\bf z}, {\bf S})$ is replaced by $({\bf z}_1, {\bf S}_1)$. Denote $\delta{\bf \tilde z}={\bf \tilde z}_1-{\bf \tilde z}$ and $\delta {\bf\tilde S}={\bf \tilde S}_1-{\bf \tilde S}$.  Following the proving method of Lemma \ref{lem91221}, we can show $\|(\delta{\bf \tilde z}, \delta{\bf \tilde S})\|^2_C\leq K_5\bar\delta^2\|(\delta{\bf z}, \delta{\bf S})\|_C^2$, where $K_5$ is a positive constant independent of $\kappa_0$ and $\bar\delta$. Thus we can choose $\delta_0 =\frac{1}{2\sqrt{K_5}}$, so that for any $\bar\delta\in (0,\delta_0]$ satisfying $\kappa_0+\bar\delta\leq 1$, we have $\|(\delta{\bf \tilde z}, \delta{\bf \tilde S})\|^2_C\leq \frac{1}{2}\|(\delta{\bf z}, \delta{\bf S})\|_C^2$, which implies that the operator ${\bf I}_{\kappa_0+\bar\delta}$ is a contraction. Therefore, it has a unique fixed point, which is the unique solution of \eqref{kappa} with $\kappa=\kappa_0+\bar\delta$.
\end{proof}

We have the following existence  result.
\begin{theorem}\label{continuity theorem}
  Under (H1)-(H6), the forward-backward equation system \eqref{CC} admits a unique solution $({\bf z}, {\bf S})\in C([0,T];(L^2[0,1])^n)\times C([0,T];(L^2[0,1])^n)$.
\end{theorem}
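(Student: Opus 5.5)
The plan is the standard method of continuity. It combines the solvability at $\kappa=0$ already discussed before Lemma \ref{lem91221} with the step-extension Lemma \ref{lem2145}, and then uses the a priori estimate of Lemma \ref{lem91221} for uniqueness.

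First, I will verify the base case. At $\kappa=0$ the system \eqref{kappa} reduces to \eqref{kappa=0}, which is decoupled in both Case (A) and Case (B) of (H6)-(i).
\begin{itemize}
\item In Case (A) ($\nu=0$), the backward equation $\dot{\bf S}^0={\bf \bar m}$ with ${\bf S}^0(T)=\boldsymbol{\eta}$ is solved explicitly, and ${\bf z}^0$ then follows by integration.
\item In Case (B) ($\mu=0$), ${\bf z}^0$ is obtained first by integrating ${\bf m}$, and ${\bf S}^0$ is then given by its terminal condition and a direct integral.
\end{itemize}
In both cases the solution lies in $C([0,T];(L^2[0,1])^n)^2$ for every $\boldsymbol{\eta}\in(L^2[0,1])^n$ and ${\bf m},{\bf \bar m}\in L^2([0,T];(L^2[0,1])^n)$. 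Uniqueness at $\kappa=0$ follows from linearity, or directly from Lemma \ref{lem91221} with zero differences of the data. So the hypothesis of Lemma \ref{lem2145} holds with $\kappa_0=0$.

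Second, I will iterate Lemma \ref{lem2145}. Since $\delta_0$ is independent of $\kappa_0$, applying the lemma successively at $\kappa_0=0,\delta_0,2\delta_0,\dots$ gives unique solvability of \eqref{kappa} for all $\kappa\in[0,1]$ after at most $\lceil 1/\delta_0\rceil$ steps, for arbitrary data $({\bf m},{\bf \bar m},\boldsymbol{\eta})$. Taking $\kappa=1$ and $({\bf m},{\bf \bar m},\boldsymbol{\eta})=({\bf 0},{\bf 0},{\bf 0})$ recovers exactly \eqref{CC}. This gives existence of a solution $({\bf z},{\bf S})\in C([0,T];(L^2[0,1])^n)\times C([0,T];(L^2[0,1])^n)$.

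Third, for uniqueness, let two solutions of \eqref{CC} be given. Both are solutions of \eqref{kappa} with $\kappa=1$ and identical data. Lemma \ref{lem91221} then gives $\|\delta{\bf z}\|_C^2+\|\delta{\bf S}\|_C^2\le 0$, so the two solutions coincide.

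The main obstacle is not in this theorem itself but in the inputs it relies on. The contraction constant $K_5$ in Lemma \ref{lem2145} must genuinely be uniform in $\kappa_0$, and this rests on the a priori constant $K$ of Lemma \ref{lem91221} being independent of $\kappa$. I would double-check this by noting that in the proof of Lemma \ref{lem91221} the coefficients multiplying $\kappa$ and $1-\kappa$ are bounded uniformly for $\kappa\in[0,1]$. The monotonicity inequality also enters as a convex combination with weights $\kappa$ and $1-\kappa$, which gives the same lower bound $\mu\|[H\mathbb I]\delta{\bf S}\|_2^2$ or $\nu\|[F\mathbb I]\delta{\bf z}\|_2^2$ uniformly in $\kappa$. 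Granting this, the argument above is a direct finite induction.
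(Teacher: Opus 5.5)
Your proposal is correct and is essentially the paper's proof. It first solves the decoupled system \eqref{kappa=0} for all data in both Case (A) and Case (B), then applies Lemma \ref{lem2145} finitely many times with the $\kappa_0$-independent step $\delta_0$ to reach $\kappa=1$ with zero data; your explicit uniqueness argument via Lemma \ref{lem91221} is a harmless supplement, since the paper already gets uniqueness from the unique solvability that Lemma \ref{lem2145} carries forward.
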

\begin{proof}
    From \eqref{kappa=0}, we know that \eqref{kappa} admits a unique solution when $\kappa=\kappa_0=0$. It then follows from Lemma \ref{lem2145} that there exists a positive constant $\delta_0$ such that for each $\kappa \in [\kappa_0,(\kappa_0+\delta_0)\wedge 1]$, \eqref{kappa} admits a unique solution. Thus, we can repeat this process $\iota$ times with $1\leq \iota\delta_0<1+\delta_0$. It then follows that, in particular, the forward-backward equation system \eqref{kappa} admits a unique solution when $\kappa=1$ and $({\bf m}, {\bf \bar m}, \boldsymbol{\eta})=({\bf 0}, {\bf 0}, {\bf 0})$.
\end{proof}

\subsection{Example}
For illustration of the monotonicity condition (H6)-(iii), we give the following example.
\begin{example}\label{exa}
	 Each graphon ${\bf G}$ gives a self-adjoint compact operator from $L^2[0,1]$ to $L^2[0,1]$ \cite[Section 7.5]{L2012}, and hence  has the following discrete spectral decomposition
     \begin{equation}\label{spectral}
  {\bf G}(\alpha,\beta)=\sum_{\ell\in \mathcal I_{\lambda}} \lambda_{\ell}{\bf f}_{\ell}(\alpha){\bf f}_{\ell}(\beta),
\end{equation}
   where ${\bf f}_{\ell}(\cdot)\in L^2[0,1]$,  $\{\lambda_{\ell}\}_{\ell\geq 1}\subset {\mathbb R}$ is the set of eigenvalues, and the convergence is in the $(L^2[0,1])^n$ sense. Here, $\{{\bf f}_{\ell}\}_{\ell\in \mathcal I_{\lambda}}$ represents the corresponding set of orthonormal eigenfunctions,  and  $\mathcal I_{\lambda}$ is the index multiset for all the nonzero eigenvalues of ${\bf G}$. Since the graphon operator ${\bf G}$ defined in \eqref{operatorG} is a Hilbert-Schmidt integral operator and hence a compact operator in $\mathcal L(L^2[0,1])$, the number of elements in $\mathcal I_{\lambda}$ is either finite or countably infinite; see  \cite[Section 7.5]{L2012}. In particular, if $\{\lambda_1,\lambda_2,\cdots\}$ is a countable multiset of nonzero eigenvalues, then $\lim_{\ell\rightarrow \infty}\lambda_{\ell}=0$, and every nonzero eigenvalue has finite multiplicity.

   Now we consider a graphon with  spectral decomposition \eqref{spectral} and denote the index set of all strictly positive eigenvalues $\lambda_{\ell}$ by $\mathcal I_{\lambda}^+$. We further assume that  $\mathcal I_{\lambda}^+$ is finite. Without loss of generality, we assume $\mathcal I_{\lambda}^+=\{\lambda_1,\cdots, \lambda_d\}$ and $\lambda_{\min}=\min_{1\leq \ell\leq d}\lambda_{\ell}$. Let $\mathcal H$ be the subspace generated by the orthonormal basis functions $\{{\bf f}_1,\cdots, {\bf f}_{d}\}$ and $\mathcal H^{\bot}$ be the complement subspace in $L^2[0,1]$. Assume
   \begin{equation}\label{subspace}
       m^x_{\cdot}\in {\mathcal H}^n:=\mathcal H\times \cdots\times \mathcal H.
   \end{equation}
   In view of the proof of Lemma \ref{lem2145}, the existence and uniqueness analysis can be carried out with the solution lying in $\mathcal H^n$ subject to \eqref{subspace}. Then we only need to establish the monotonicity condition for ${\bf z}, {\bf S}\in \mathcal H^n$. Moreover, we assume that for every $t\in [0,T]$, the following inequalities hold:
     \begin{equation}\label{24412}
         \begin{aligned}
             &-\frac{Q\Gamma+\Gamma^{\top} Q-\Pi D-D^{\top}\Pi}{2}-{\gamma^2}\Pi\sigma\sigma^{\top}\sigma^{\top}\sigma\Pi-\frac{DD^{\top}}{4}\geq 0,\\
             &BR^{-1}B^{\top}\lambda_{\min}-2I_{n}\gg 0,\qquad -(Q_f\Gamma_f+\Gamma_f^{\top}Q_f)\geq 0,
         \end{aligned}
     \end{equation}
     where $\Pi$ is determined by \eqref{Phi}. Let $\nu=\inf_{0\leq t\leq T}|\tilde\lambda^*_Q (t)\vee \tilde\lambda^*_{Q_f}|$ and $\mu=\inf_{0\leq t\leq T}|\tilde\lambda_R^*(t)|$, where $\tilde\lambda_Q^*(t)$ (resp., $\tilde\lambda_R^*(t)$, $\tilde\lambda_{Q_f}^*$) is the largest eigenvalue of the symmetric matrix $\frac{Q(t)\Gamma+\Gamma^{\top} Q(t)-D^{\top}(t)\Pi(t)-\Pi(t) D(t)}{2}+{\gamma^2}\Pi(t)\sigma(t)\sigma^{\top}(t)\sigma^{\top}(t)\sigma(t)\Pi(t)+\frac{D(t)D^{\top}(t)}{4}$ (resp., $2I_n$ $-B(t)R^{-1}(t)B^{\top}(t)\lambda_{\min}$, $\frac{Q_f\Gamma_f+\Gamma_f^{\top} Q_f}{2}$). By \eqref{24412}, we obtain $\mu>0$, then
\begin{equation*}
  \begin{aligned}
    &\langle [D{\bf G}]{\bf z}, {\bf S}\rangle -\langle [BR^{-1}B^{\top}{\bf G}]{\bf S}, {\bf S}\rangle -\langle [{2}{\gamma }\Pi\sigma\sigma^{\top}\mathbb I]{\bf S}, {\bf z}\rangle +\langle [(Q\Gamma-\Pi D)\mathbb I]{\bf z}, {\bf z}\rangle\\
    &=\langle [(Q\Gamma-\Pi D)\mathbb I]{\bf z}, {\bf z}\rangle -\langle [BR^{-1}B^{\top}{\bf G}]{\bf S}, {\bf S}\rangle \\
    &\qquad -{2}{\gamma} \Pi\sigma\sigma^{\top}\int_0^1{\bf S}(\alpha) {\bf z}(\alpha)\mathrm{d}\alpha+D\int_0^1\int_0^1{\bf G}(\alpha,\beta){\bf z}(\beta)\mathrm{d}\beta {\bf S}(\alpha)\mathrm{d}\alpha\\
    &\leq \Big\langle \Big[\Big(Q\Gamma\!-\!\Pi D\!+\!{\gamma^2}\Pi\sigma\sigma^{\top}\sigma^{\top}\sigma\Pi+\frac{DD^{\top}}{4}\Big)\mathbb I\Big]{\bf z}, {\bf z}\Big\rangle\!+\!\langle [(2I_n\!-\!BR^{-1}B^{\top}\lambda_{\min})\mathbb I]{\bf S},{\bf S}\rangle \\
    &\leq \tilde\lambda_Q^*(t)\|{\bf z}\|_2^2  \!+\!\tilde\lambda_R^*(t)\|{\bf S}\|_2^2\\
    &\leq -\nu \|{\bf z}\|_2^2-\mu \|{\bf S}\|_2^2,
  \end{aligned}
\end{equation*}
for all ${\bf z}, {\bf S}\in \mathcal H^n$,   and
moreover,
\begin{equation*}
  \begin{aligned}
   -\langle [Q_f\Gamma_f \mathbb I]  {\bf z},  {\bf z} \rangle\geq  -\tilde\lambda_{Q_f}^* \|{\bf z}\|_2^2\geq \nu \| {\bf z}\|_2^2.
  \end{aligned}
\end{equation*}
Thus the desired monotonicity condition is verified.
	\end{example}

\begin{remark}
  i) Example \ref{exa} shows how to construct concrete models to verify the monotonicity condition. In fact, even if $d:=|\mathcal I_{\lambda}^+|=\infty$, as long as $m_{\cdot}^x$  remains in a fixed finite dimensional subspace of the space spanned by $\{{\bf f}_{\ell}, \ell \in \mathcal I_{\lambda}^+\}$, the monotonicity condition can be verified provided that  inequalities \eqref{24412} hold.

  ii) In the risk-neutral case (i.e., the limiting case of \eqref{CCR} with $\gamma\to 0$), the constraints on the initial mean $m_{\cdot}^x$ imposed in Example \ref{exa} are no longer required. For example, we consider the uniform attachment graphs, which converge to the limit graphon ${\bf G}(\alpha,\beta)=1-\max(\alpha,\beta)$, $\alpha,\beta\in [0,1]$, under the cut metric with probability one (see \cite[Proposition 11.40]{L2012}). The spectral decomposition of ${\bf G}$ is given by
  \begin{equation*}
  	{\bf G}(\alpha,\beta)=\sum_{k=1,3,5,\cdots}\frac{4}{k^2\pi^2}\sqrt 2\cos\Big(\frac{\pi k\alpha}{2}\Big)\times \sqrt 2\cos \Big(\frac{\pi k \beta}{2}\Big).
  \end{equation*}
  Here, the eigenvalues of ${\bf G}$ are given by $\lambda_{k}=\frac{4}{k^2\pi^2}$ for positive odd integers $k$, and $\lambda_k=0$ for positive even integers $k$.  Then for every $t\in [0,T]$, if $D=0$, $-(Q\Gamma+\Gamma^{\top}Q)\geq 0$, $-(Q_f\Gamma_f+\Gamma^{\top}_fQ_f)\geq 0$, we can easily check that equation system \eqref{CC} satisfies Case (A) for the monotonicity condition. To show this, let $\lambda_{\min}$ be the smallest  eigenvalue of graphon ${\bf G}$ (here $\lambda_{\min}=0$), $\nu=\inf_{0\leq t\leq T}|\lambda_Q^*(t)\vee \lambda_{Q_f}^*|$, $\mu=\inf_{0\leq t\leq T}|\lambda_R^*(t)|$, where $\lambda_Q^*(t)$ (resp., $\lambda_R^*(t)$,  $\lambda_{Q_f}^*$) is the largest eigenvalue of the symmetric matrix $\frac{Q(t)\Gamma+\Gamma^{\top} Q(t)}{2}$ (resp., $-R^{-1}(t)$, $\frac{Q_f\Gamma_f+\Gamma_f^{\top}Q_f }{2}$).  Subsequently, we can verify $\mu>0$,
  \begin{equation*}
      -\langle [Q_f\Gamma_f \mathbb I]  {\bf z},  {\bf z} \rangle\geq  -\lambda^*_{Q_f} \|{\bf z}\|_2^2\geq \nu \| {\bf z}\|_2^2
  \end{equation*}
  and
  \begin{equation*}
  	\begin{aligned}
  			&\langle [D{\bf G}]{\bf z}, {\bf S}\rangle -\langle [BR^{-1}B^{\top}{\bf G}]{\bf S}, {\bf S}\rangle -\langle [{2}{\gamma }\Pi\sigma\sigma^{\top}\mathbb I]{\bf S}, {\bf z}\rangle +\langle [(Q\Gamma-\Pi D)\mathbb I]{\bf z}, {\bf z}\rangle\\
  			&\leq \langle [Q\Gamma\mathbb I]{\bf z}, {\bf z}\rangle -\langle R^{-1} [\sqrt{\lambda_{\min}}B\mathbb I]{\bf S} , [\sqrt{\lambda_{\min}}B\mathbb I]{\bf S}\rangle \\
            &\leq \lambda_Q^*(t)\|{\bf z}\|_2^2\!+\!\lambda_R^*(t)\|[\sqrt{\lambda_{\min}}B\mathbb I]{\bf S}\|_2^2\\
  			&\leq -\nu \|{\bf z}\|_2^2-\mu \|[\sqrt{\lambda_{\min}}B\mathbb I]{\bf S}\|_2^2.
  		\end{aligned}
  \end{equation*}
  Hence the monotonicity condition is verified.
\end{remark}

\section{$\varepsilon$-Nash equilibrium}\label{sec:Nash}
Suppose that $(z,S)$ has been determined by Theorem \ref{continuity theorem}.  Taking the resulting function $S_\alpha (t)$ for the best response control law \eqref{law}, we construct the decentralized individual strategies in the $N$-player model by appropriately matching the parameter $\alpha$. Specifically,  we construct the set of decentralized strategies
$\hat u=(\hat u_1, \cdots, \hat u_N)$ with
\begin{equation}\label{centralized control law}
    \hat u_i=-R^{-1}B^{\top}\Pi\hat x_i-R^{-1}B^{\top}S_{I_i^*},
\end{equation}
where $I^*_i$ is the midpoint of the subinterval $I_i\in \{I_1, \cdots, I_N\}$ of length $\frac{1}{N}$ (see Section \ref{sec:sub:preliminary}), as the nodal index of agent ${\mathcal A}_i$ in the $N$-player model. The resulting closed-loop dynamics for $\hat x_i$ are governed by
\begin{equation}\label{fstate}
    \mathrm{d}\hat x_i\!=\![(A\!-\!BR^{-1}B^{\top}\Pi)\hat x_i\!+\!D\hat x_i^{(N)}\!-\!BR^{-1}B^{\top}S_{I_i^*}]\mathrm{d}t\!+\!\sigma\mathrm{d}w_i,\quad \hat x_i(0)\!=\!\xi_i,
\end{equation}
where $\hat x_i^{(N)}=\frac{1}{N}\sum_{j=1}^Ng_{ij}^N\hat x_j$.

To analyze the performance of  $\hat u$ in terms of  an $\varepsilon$-Nash equilibrium,  we introduce the following definition.

\begin{definition}\label{Def5.1}
  The set of strategies $\hat  u=(\hat  u_1, \cdots, \hat u_N)$   is called an $\varepsilon$-Nash equilibrium with respect to costs $\mathcal J_i$, $1\le i\le N$, if for $\varepsilon\ge 0$, the $N$ inequalities hold:
  \begin{equation*}
    \mathcal J_i(\hat  u_i, \hat u_{-i})\leq \inf_{u_i\in \mathcal U_c^i  }\mathcal J_{i}(u_i,\hat  u_{-i})+\varepsilon,\quad \text{for all }\ 1\leq i\leq N,
  \end{equation*}
  where $\mathcal U_c^i$ is defined by  \eqref{admissibleset} as the centralized strategy set for agent $\mathcal A_i$.
\end{definition}

We need to introduce the following assumption, which specifies the nature of the approximation error between $g^N$ for the finite graph and the graphon function $g$.

 \textbf{(H7)} The sequence $\{g^N, N\ge 2\}$ and the graphon limit $g$ satisfy
\begin{equation*}
  \lim_{N\rightarrow \infty}\max_{1\leq i\leq N}\sum_{j=1}^N\bigg|\frac{g^N_{ij}}{N}-\int_{\beta\in I_j}g({I^*_i},{\beta})\mathrm{d}\beta\bigg|=0.
\end{equation*}

For further estimate of \eqref{fstate}, we need to introduce an auxiliary process.
By taking the nodal parameter in \eqref{law} as  $\alpha =I_i^*\in [0,1]$, we denote the  best response control law
\begin{equation}\label{decentralized control law}
    \bar u_i=-R^{-1}B^{\top}\Pi \bar x_i-R^{-1}B^{\top}S_{I_i^*},
\end{equation}
and accordingly  introduce the mean-field limit dynamics
\begin{equation}\label{destate}
    \mathrm{d}\bar x_i\!=\![(A\!-\!BR^{-1}B^{\top}\Pi)\bar x_i\!+\!Dz_i\!-\!BR^{-1}B^{\top}S_{I_i^*}]\mathrm{d}t\!+\!\sigma\mathrm{d}w_i,\quad \bar x_i(0)\!=\!\xi_i.
\end{equation}
Here, $z_i=z_{I_i^*}$ is intended to approximate  the weighted state-average $\hat x_i^{(N)}$ at node $i$.

Let us introduce the following notation:
\begin{equation*}
      \begin{aligned}
          & \varepsilon_{1,N}=\max_{1\leq i\leq N}\sum_{j=1}^N\Big|\frac{g^N_{ij}}{N}-\int_{\beta\in I_j}g({I^*_i},{\beta})\mathrm{d}\beta\Big|,\\
          & \varepsilon_{2,N}=\sup_{0\leq \alpha\leq 1} \sup_{0\leq t\leq T}\Big| z_{\alpha}^N(t)-z_{\alpha}(t) \Big|,\quad \varepsilon_{3,N}=\sup_{0\leq \alpha \leq 1}\Big| \mathbb E\xi_{\alpha}^N-m_{\alpha}^x \Big|,
      \end{aligned}
  \end{equation*}
  where $z_{\alpha}^N=\sum_{i=1}^N\mathbbm 1_{I_i}(\alpha)z_i$ and $\xi_{\alpha}^N=\sum_{i=1}^N\mathbbm 1_{I_i}(\alpha)\xi_i$.
  Based on (H1), (H7) and the uniform continuity of $z_{\alpha}(t)$ with respect to $\alpha\in [0,1]$, it follows that $\lim_{N\rightarrow \infty} (\varepsilon_{1,N}+ \varepsilon_{2,N}+\varepsilon_{3,N})=0$. Then we have the following estimates.

\begin{lemma}\label{lem422}
  Under (H1)-(H7), there exists a constant $C_0$ independent of $N$ such that
    \begin{align}
      &\sup_{1\leq i\leq N} \sup_{0\leq t\leq T}|\hat x^{(N)}_i\!-\!z_i| \!\leq\!   \frac{C_0}{N}\Big|\sum_{j=1}^N g_{ij}^N (\xi_j\!-\!\mathbb E\xi_j)\Big|  \!+\! \sup_{0\leq t\leq T}\frac{C_0}{N} \Big|\int_0^t \sum_{j=1}^Ng_{ij}^N\sigma \mathrm{d}w_j\Big| \label{Eq32}\\
      &\qquad \!+\! \frac{C_0}{N^2}\sum_{i=1}^N\Big|\sum_{j=1}^N g_{ij}^N (\xi_j\!-\!\mathbb E\xi_j)\Big|  \!+\! \sup_{0\leq t\leq T}\frac{C_0}{N^2}\sum_{i=1}^N \Big|\int_0^t \sum_{j=1}^Ng_{ij}^N\sigma \mathrm{d}w_j\Big|\!+\!C_0 \varepsilon_N,  \notag\\
      &\sup_{1\leq i\leq N} \sup_{0\leq t\leq T}|\hat x_i-\bar x_i| \!\leq\!   \frac{C_0}{N}\Big|\sum_{j=1}^N g_{ij}^N (\xi_j\!-\!\mathbb E\xi_j)\Big|  \!+\! \sup_{0\leq t\leq T}\frac{C_0}{N} \Big|\int_0^t \sum_{j=1}^Ng_{ij}^N\sigma \mathrm{d}w_j\Big| \label{Eq42} \\
      &\qquad \!+\! \frac{C_0}{N^2}\sum_{i=1}^N\Big|\sum_{j=1}^N g_{ij}^N (\xi_j\!-\!\mathbb E\xi_j)\Big|  \!+\! \sup_{0\leq t\leq T}\frac{C_0}{N^2}\sum_{i=1}^N \Big|\int_0^t \sum_{j=1}^Ng_{ij}^N\sigma \mathrm{d}w_j\Big|\!+\!C_0 \varepsilon_N, \notag
    \end{align}
  where $\hat x_i$ is given by \eqref{fstate} and $\bar x_i$ is given by \eqref{destate}. Here, $\varepsilon_N= \varepsilon_{1,N}+ \varepsilon_{2,N}+\varepsilon_{3,N}$.
\end{lemma}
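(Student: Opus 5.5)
The plan is to treat both estimates as pathwise consequences of linear ODE/SDE comparisons plus Gronwall's inequality. The key is that the difference $e_i:=\hat x_i-\bar x_i$ has no noise. Subtracting \eqref{destate} from \eqref{fstate} gives
\begin{equation*}
\dot e_i=(A-BR^{-1}B^{\top}\Pi)e_i+D(\hat x_i^{(N)}-z_i),\qquad e_i(0)=0,
\end{equation*}
so $\sup_{s\le t}|e_i(s)|\le C\int_0^t|\hat x^{(N)}_i(s)-z_i(s)|\,\mathrm{d}s$. Hence \eqref{Eq42} follows directly from \eqref{Eq32}, and the real work is \eqref{Eq32}.

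For \eqref{Eq32} I will decompose
\begin{equation*}
\hat x_i^{(N)}-z_i=\frac1N\sum_{j=1}^N g_{ij}^N e_j+\frac1N\sum_{j=1}^N g_{ij}^N(\bar x_j-\mathbb E\bar x_j)+\Big(\frac1N\sum_{j=1}^N g_{ij}^N\mathbb E\bar x_j-z_i\Big),
\end{equation*}
and bound the three pieces separately.

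\emph{Fluctuation term.} Set $A_{cl}=A-BR^{-1}B^{\top}\Pi$. Then $y:=\frac1N\sum_j g^N_{ij}(\bar x_j-\mathbb E\bar x_j)$ solves $\mathrm{d}y=A_{cl}y\,\mathrm{d}t+\frac1N\sum_j g_{ij}^N\sigma\,\mathrm{d}w_j$ with $y(0)=\frac1N\sum_j g_{ij}^N(\xi_j-\mathbb E\xi_j)$. To avoid a stochastic convolution with a time-dependent kernel, let $M_i(t)=\frac1N\int_0^t\sum_j g^N_{ij}\sigma\,\mathrm{d}w_j$. Then $y-M_i$ satisfies the random ODE $\frac{\mathrm d}{\mathrm dt}(y-M_i)=A_{cl}(y-M_i)+A_{cl}M_i$. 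Gronwall then gives
\begin{equation*}
\sup_t|y|\le C\Big(\frac1N\Big|\sum_j g_{ij}^N(\xi_j-\mathbb E\xi_j)\Big|+\sup_t|M_i(t)|\Big),
\end{equation*}
which are exactly the first two terms in \eqref{Eq32}.

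\emph{Deterministic term.} I write $z_i=z_{I_i^*}=\int_0^1 g(I_i^*,\beta)\mathbb E x_\beta\,\mathrm{d}\beta$ by \eqref{star}, and split
\begin{equation*}
\frac1N\sum_j g^N_{ij}\mathbb E\bar x_j-z_i=\sum_j\Big(\frac{g^N_{ij}}N-\int_{I_j}g(I_i^*,\beta)\,\mathrm{d}\beta\Big)\mathbb E\bar x_j+\sum_j\int_{I_j}g(I_i^*,\beta)\big(\mathbb E\bar x_j-\mathbb E x_\beta\big)\,\mathrm{d}\beta.
\end{equation*}
The first sum is at most $\varepsilon_{1,N}\sup_j\|\mathbb E\bar x_j\|_\infty$, and $\mathbb E\bar x_j$ is uniformly bounded by compactness of $S_0^x$ and boundedness of $z,S$. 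For the second sum, $\mathbb E\bar x_j$ and $\mathbb E x_\beta$ with $\beta\in I_j$ solve the same linear ODE. Their data differ by $\mathbb E\xi_j-m^x_\beta$, $z_{I_j^*}-z_\beta$ and $S_{I_j^*}-S_\beta$, so the difference is $O(\varepsilon_{3,N}+\varepsilon_{2,N})$ plus the modulus of continuity of $S$ in $\alpha$. That last modulus I will control through $z$, since $S_\alpha$ is a bounded linear functional of $z_\alpha$ through the backward equation in \eqref{CCR}, or else absorb it into $\varepsilon_{2,N}$.

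\emph{Closure term (main obstacle).} The term $\frac1N\sum_j g^N_{ij}e_j$ couples all agents. Using $g^N_{ij}\le1$, it is bounded by $a(t):=\frac1N\sum_j\sup_{s\le t}|e_j(s)|$. Averaging the bound on $e_j$ over $j$ and inserting the decomposition above gives
\begin{equation*}
a(t)\le C\int_0^t\big(a(s)+\text{(averaged fluctuation terms)}+\varepsilon_N\big)\,\mathrm{d}s.
\end{equation*}
Gronwall then yields $a(T)\le C\big(\frac1{N^2}\sum_i|\sum_j g^N_{ij}(\xi_j-\mathbb E\xi_j)|+\sup_t\frac1{N^2}\sum_i|\int_0^t\sum_j g^N_{ij}\sigma\,\mathrm{d}w_j|+\varepsilon_N\big)$. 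These are precisely the $N^{-2}\sum_i$ terms in \eqref{Eq32}. Substituting this bound on $a(T)$ back into the decomposition of $\hat x^{(N)}_i-z_i$ gives \eqref{Eq32} with a constant $C_0$ independent of $N$ and $i$, and then \eqref{Eq42} follows from the first step.
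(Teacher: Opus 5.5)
Your proof is correct, but it is organized differently from the paper's.

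\textbf{The paper's route.} The paper does not use $e_i=\hat x_i-\bar x_i$ or the centred limit processes as working variables. It writes one SDE for $\hat x^{(N)}_i-z_i$ directly, using the $z$-dynamics from \eqref{CCR} and the consistency relation to turn the $\Pi$-part of the averaged controls into $\Pi(\hat x^{(N)}_i-z_i)$. The noise $\frac1N\sum_j g^N_{ij}\sigma\,\mathrm dw_j$, the initial fluctuation, the $\varepsilon_N$ errors and the coupling $\frac1N\sum_j|\hat x^{(N)}_j-z_j|$ then all enter as forcing in a single integral inequality. The paper averages over $i$, applies Gronwall, and substitutes back. It dismisses \eqref{Eq42} as ``similar.''

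\textbf{Your route.} You take $e_j$ as the closure variable. You put all the randomness into the decoupled Ornstein--Uhlenbeck-type processes $\bar x_j-\mathbb E\bar x_j$, and the consistency error into a separate deterministic term. The averaging--Gronwall closure is the same as in the paper. The graph-approximation, discretization and initial-mean errors are handled exactly as there, including controlling $S_{I_j^*}-S_\beta$ by $z_{I_j^*}-z_\beta$ through the uncoupled backward equation in \eqref{CCR}.

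\textbf{What each buys.} The paper's route is shorter. Yours makes the structure more transparent. Through the noise-free equation $\dot e_i=(A-BR^{-1}B^{\top}\Pi)e_i+D(\hat x^{(N)}_i-z_i)$, it also obtains \eqref{Eq42} from \eqref{Eq32} in one line instead of repeating the argument.

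\textbf{One small fix.} In the closure step, the stochastic $N^{-2}\sum_i$ term in \eqref{Eq32} has the supremum over $t$ outside the sum. Averaging your bound on $\sup_t|y_j|$ would only give $\frac1N\sum_j\sup_t|M_j(t)|$, which is larger. Instead, use the pointwise bound $|y_j(t)|\le C|y_j(0)|+|M_j(t)|+C\int_0^t|M_j(s)|\,\mathrm ds$, which your variation-of-constants argument already supplies. Averaged over $j$ inside the Gronwall integrand, it yields exactly the form stated in \eqref{Eq32}.
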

\begin{proof}
  According to \eqref{CCR} and \eqref{fstate}, it holds that
  \begin{equation}\label{1212}
    \left\{\begin{aligned}
      &\mathrm{d}(\hat x^{(N)}_i-z_i)=\Big[A(\hat x^{(N)}_i-z_i)+\frac{B}{N}\sum_{j=1}^Ng_{ij}^N\hat u_j-B\int_0^1g(I^*_i,\beta) \mathbb E\bar u_{\beta} \mathrm{d}\beta\\
      &\qquad\qquad + \frac{D}{N}\sum_{j=1}^Ng^N_{ij}\hat x^{(N)}_j-D\int_0^1g(I^*_i,\beta)z_{\beta}\mathrm{d}\beta  \Big]\mathrm{d}t+\frac{1}{N}\sum_{j=1}^Ng_{ij}^N\sigma\mathrm{d}w_j,\\
      &\hat x^{(N)}_i(0)-z_i(0)=\frac{1}{N}\sum_{j=1}^Ng^N_{ij}\xi_j- \int_0^1g(I_i^*,\beta)m^x_{\beta}\mathrm{d}\beta .
    \end{aligned}\right.
  \end{equation}
  In the following estimates, we use $C$ to denote a generic constant that does not depend on $N$ and may vary from line to line. Recalling the control laws $\hat u_i$ in \eqref{centralized control law}, $\bar u_i$ in  \eqref{decentralized control law} and $\bar u_{\alpha}$ in \eqref{law}, at time $t$,
  we have
  \begin{equation*}
    \begin{aligned}
    &\!\Big|\frac{B}{N}\sum_{j=1}^Ng_{ij}^N\hat u_j-B\int_0^1g(I^*_i,\beta) \mathbb E\bar u_{\beta} \mathrm{d}\beta\Big| \\
    &\!=\! \Big|\frac{BR^{-1}B^{\top}}{N}\sum_{j=1}^Ng_{ij}^N (\Pi\hat x_j\!+\!S_{I_j^*})-BR^{-1}B^{\top}\int_0^1 g(I_i^*,\beta)(\Pi \mathbb E\bar x_{\beta}+S_{\beta})\mathrm{d}\beta\Big| \\
    &\!\leq \! C |\hat x_i^{(N)}\!-\!z_i|  \!+\!C \Big|\sum_{j=1}^N\Big(\frac{g_{ij}^N}{N}\!-\! \int_{\beta\in I_j}g(I^*_i,\beta)\mathrm{d}\beta\Big)S_{I_j^*}\Big|\!+\!C \Big| \int_0^1g(I_i^*, \beta) (S_{\beta}^N\!-\!S_{\beta})\mathrm{d}\beta\Big| \\
    &\!\leq \! C |\hat x_i^{(N)}\!-\!z_i| +C\varepsilon_N,
    \end{aligned}
\end{equation*}
where $S_{\alpha}^N=\sum_{j=1}^N\mathbbm 1_{I_j}(\alpha)S_{I_j^*}$. Here, the last inequality is due to the uniform boundedness of $S_{I_j^*}$ and $\sup_{0\leq t\leq T}|S_{\alpha}^N$ $-S_{\alpha}|\leq \sup_{0\leq t\leq T}C|z_{\alpha}^N-z_{\alpha}|$ resulting from \eqref{CCR}.  Moreover, we also have
  \begin{equation*}
    \begin{aligned}
    \Big|\frac{1}{N}\sum_{j=1}^Ng^N_{ij}\xi_j\!-\! \int_0^1g(I_i^*,\beta)m^x_{\beta}\mathrm{d}\beta\Big|&\leq \frac{C}{N}\Big|\sum_{j=1}^N g_{ij}^N(\xi_j-\mathbb E\xi_j)\Big| +C\varepsilon_N, \\
      \Big|\frac{D}{N}\sum_{j=1}^Ng^N_{ij}\hat x^{(N)}_j-D\int_0^1g(I^*_i,\beta)z_{\beta}\mathrm{d}\beta\Big|
      &\leq \frac{C}{N}\sum_{j=1}^N\big|\hat x^{(N)}_j-z_j\big|+C\varepsilon_N.
    \end{aligned}
  \end{equation*}
  Hence, by \eqref{1212}, for all $s\in [0,T]$, we obtain
  \begin{equation}\label{1312}\begin{aligned}
      \big|\hat x_i^{(N)}(s)\!-\!z_i(s)\big| &
      \! \leq\! C  \int_0^s\!\!\Big( |\hat x_i^{(N)}\!-\!z_i |\!+\!\frac{1}{N}\sum_{j=1}^N |\hat x_j^{(N)}\!-\!z_j |\Big)\mathrm{d}t\!+\!C \varepsilon_N\\
      &  \!+\!\frac{C}{N}\Big|\sum_{j=1}^N g_{ij}^N(\xi_j-\mathbb E\xi_j)\Big|  \!+\!\frac{C}{N}\Big|\int_0^s \sum_{j=1}^Ng_{ij}^N\sigma \mathrm{d}w_j\Big|.
  \end{aligned}\end{equation}
  Adding up the above $N$  equations and using Gronwall's inequality, we derive
  \begin{equation}\label{1322}\begin{aligned}
      &\frac{1}{N}\sum_{j=1}^N \big|\hat x_j^{(N)}(s)\!-\!z_j(s)\big| \!\leq \!\frac{C}{N^2}\sum_{i=1}^N\Big\{\Big|\sum_{j=1}^N g_{ij}^N(\xi_j-\mathbb E\xi_j)\Big|  \!+\! \Big|\int_0^s \sum_{j=1}^Ng_{ij}^N\sigma \mathrm{d}w_j\Big|\Big\}\!+\!C \varepsilon_N.
  \end{aligned}\end{equation}
  Combining \eqref{1312} with \eqref{1322}, we get \eqref{Eq32}. Similarly, \eqref{Eq42} can be proved.
\end{proof}

For the risk-sensitive cost functionals \eqref{cost} and \eqref{lcost}, we need some exponentiated estimates of the difference between the processes in  \eqref{fstate} and \eqref{destate}. For all the remaining part of this section, we take a sufficiently large $\hat N$ and consider $N\geq \hat N$.
\begin{lemma}\label{lem3389}
  Under (H1)-(H7), for any given deterministic  positive function $\Theta=f(N)$, $N\in \mathbb N$, satisfying
  \begin{equation}\label{Thetacondition89}
    f(N)=o\Big(1/\Big({\frac{1}{N}+\varepsilon_N^2}\Big)\Big),
  \end{equation}
  where $\varepsilon_N$ is given by Lemma \ref{lem422}, we have
  \begin{equation}\label{estimateexp89}
  \begin{aligned}
    \sup_{1\leq i\leq N}\mathbb E\big[e^{\Theta\int_0^T|\hat x_i-\bar x_i|^2\mathrm{d}t}\big]=1+o(1),\quad
    \sup_{1\leq i\leq N}\mathbb E\big[e^{\Theta\int_0^T|\hat x_i^{(N)}-z_i|^2\mathrm{d}t}\big]=1+o(1).
  \end{aligned}
\end{equation}
\end{lemma}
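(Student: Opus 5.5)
We start from the pathwise bounds of Lemma \ref{lem422}. Since both $\sup_t|\hat x_i-\bar x_i|$ and $\sup_t|\hat x_i^{(N)}-z_i|$ are dominated by the same right-hand side, it suffices to treat one case, say \eqref{Eq42}. Write that right-hand side as $C_0(X_1+X_2+X_3+X_4+\varepsilon_N)$, where
\[
X_1=\tfrac1N\big|\textstyle\sum_j g_{ij}^N(\xi_j-\mathbb E\xi_j)\big|,\qquad X_2=\sup_{t}\tfrac1N\big|\textstyle\int_0^t\sum_j g_{ij}^N\sigma\,\mathrm{d}w_j\big|,
\]
and $X_3,X_4$ are the corresponding averages over $i$ of such terms, scaled by $1/N^2$. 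Then
\[
\int_0^T|\hat x_i-\bar x_i|^2\mathrm{d}t\le 5TC_0^2\big(X_1^2+X_2^2+X_3^2+X_4^2+\varepsilon_N^2\big).
\]
The deterministic part contributes the factor $e^{5TC_0^2\Theta\varepsilon_N^2}=1+o(1)$, since $\Theta\varepsilon_N^2\to0$ by \eqref{Thetacondition89}. To control the random part we apply H\"older's inequality with four factors, which reduces the claim to showing $\mathbb E[e^{20TC_0^2\Theta X_k^2}]=1+o(1)$ uniformly in $i$ for each $k$. We also need the lower bound $\ge1$, which is trivial.

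The core estimate is a sub-Gaussian tail bound with variance proxy $O(1/N)$. For $X_1$, the variables $\xi_j-\mathbb E\xi_j$ are independent, centered and bounded by (H1), because $S_0^x$ is compact. Hoeffding's inequality (applied componentwise) then gives $\mathbb P(X_1>r)\le 2n\,e^{-cNr^2}$ with $c$ independent of $i$ and $N$, since $g_{ij}^N\in[0,1]$. For $X_2$, the process $M_t=\frac1N\int_0^t\sum_j g_{ij}^N\sigma\,\mathrm{d}w_j$ is a continuous Gaussian martingale whose quadratic variation is at most $\|\sigma\|_\infty^2T/N$. The exponential martingale inequality (or Bernstein's inequality for continuous martingales) yields $\mathbb P(\sup_t|M_t|>r)\le 2n\,e^{-cNr^2}$. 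Given such a tail bound, with $\lambda=20TC_0^2\Theta$ and $\lambda/N\to0$ from \eqref{Thetacondition89}, we compute
\[
\mathbb E[e^{\lambda X^2}]=1+\int_0^\infty \lambda e^{\lambda s}\,\mathbb P(X^2>s)\,\mathrm{d}s\le 1+2n\lambda\int_0^\infty e^{-(cN-\lambda)s}\,\mathrm{d}s=1+\frac{2n\lambda}{cN-\lambda}=1+o(1).
\]

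For the averaged terms $X_3,X_4$, Jensen's inequality gives $X_3^2\le\frac1N\sum_i Y_i^2$, where $Y_i$ is the corresponding $X_1$-type variable at node $i$. Then convexity gives $e^{\lambda X_3^2}\le\frac1N\sum_i e^{\lambda Y_i^2}$, whose expectation is bounded by $\sup_i\mathbb E[e^{\lambda Y_i^2}]=1+o(1)$. The same argument handles $X_4$ via the $X_2$-type variables.

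The main obstacle is getting the sub-Gaussian tails with constants uniform in $i$ and $N$, in particular for the supremum over $t$ in $X_2$. I would first try the approach above: apply Doob's inequality to the exponential martingale $e^{\theta\cdot M_t-\frac12\theta^\top\langle M\rangle_t\theta}$ componentwise and use the deterministic bound on $\langle M\rangle_T$. As an alternative, one can use directly that $\sup_t|M_t|^2$ has Gaussian-type exponential moments by Fernique's theorem, with the scaling in $N$ coming from the variance $O(1/N)$. The remaining steps, the H\"older splitting and the absorption of $\varepsilon_N$, are routine.
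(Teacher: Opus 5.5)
Your proof is correct, but it takes a more direct route than the paper. The two arguments share the starting point: the pathwise bound of Lemma \ref{lem422} followed by a H\"older splitting (the paper uses five factors, the deterministic one included).

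\textbf{How the paper proceeds.} It argues qualitatively for each random factor.
\begin{itemize}
\item It shows the exponent tends to $0$ in probability, via a second-moment bound for the initial-state term and Burkholder--Davis--Gundy for the noise term.
\item It establishes uniform integrability. For the initial-state term this comes from a Hoeffding tail integral. For the noise term it reduces in law to a single Brownian motion, then uses Doob's maximal inequality and the explicit Gaussian moment formula $\det\big(I_n-\frac{4C\Theta}{N}\int_0^T\sigma\sigma^{\top}\mathrm{d}t\big)^{-1/2}$.
\item It concludes with Vitali's convergence theorem.
\item The two averaged terms are dismissed with ``a similar argument''.
\end{itemize}

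\textbf{How you proceed.} You prove sub-Gaussian tails $\mathbb P(X>r)\le 2n e^{-cNr^2}$ for both noise sources. For the initial states you use Hoeffding; for the Brownian term you use the exponential martingale inequality with the deterministic bound $\langle M\rangle_T=O(1/N)$. You then integrate these tails through the layer-cake formula. You handle the averaged terms by Jensen plus convexity of the exponential, which reduces them to $\sup_{i'}\mathbb E[e^{\lambda Y_{i'}^2}]$.

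\textbf{What your route buys.}
\begin{itemize}
\item An explicit rate $1+O\big(\Theta(\tfrac1N+\varepsilon_N^2)\big)$ rather than a bare $1+o(1)$.
\item Uniformity in $i$ that is visible immediately, rather than implicit in the Vitali step.
\item No need for convergence in probability or uniform integrability at all.
\item A genuine argument for the averaged terms, which the paper leaves unstated.
\end{itemize}

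The paper's Vitali framework is the more flexible template when only uniform integrability, and not a clean tail bound, is available. In the present setting, however, the initial states are bounded and independent and the diffusion coefficient is deterministic, so your tail-bound argument is both shorter and sharper.
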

\begin{proof}
  By virtue of Lemma \ref{lem422}, for all $t\in [0,T]$, it holds that
 \begin{equation*}\begin{aligned}
     &|\hat x_i(t)-\bar x_i(t)|^2\!\leq\!  \frac{5C_0^2}{N^2} \Big|\sum_{j=1}^N g_{ij}^N(\xi_j-\mathbb E\xi_j)\Big|^2  \!+\!  \frac{5C_0^2}{N^2} \Big(\!\sup_{0\leq t\leq T}\Big|\!\int_0^t \sum_{j=1}^Ng_{ij}^N\sigma \mathrm{d}w_j\Big|\Big)^2 \\
     &\qquad \!+\! \frac{5C_0^2}{N^4} \Big(\sum_{i=1}^N\Big|\sum_{j=1}^N g_{ij}^N(\xi_j-\mathbb E\xi_j)\Big|\Big)^2  \!+\!  \frac{5C_0^2}{N^4} \Big(\!\sup_{0\leq t\leq T}\sum_{i=1}^N\Big|\!\int_0^t \sum_{j=1}^Ng_{ij}^N\sigma \mathrm{d}w_j\Big|\Big)^2 \!+\!5C_0^2 \varepsilon^2_N.
 \end{aligned}\end{equation*}
 In the following estimates, we use $C$ to denote a generic constant that does not depend on $N$ and may vary from line to line.  Thus, it follows from H\"older's inequality that
 \begin{equation}\label{89511}
     \begin{aligned}
         &\mathbb E\bigg[\exp\Big\{\Theta T\Big(\sup_{0\leq t\leq T}|\hat x_i(t)-\bar x_i(t)|^2\Big)\Big\}\bigg]\!\leq \!\Big(\mathbb E\Big[\exp\Big\{\frac{C\Theta }{N^2} \Big|\sum_{j=1}^N g_{ij}^N (\xi_j-\mathbb E\xi_j) \Big|^2\Big\}\Big]\Big)^{\frac{1}{5}}\\
         & \qquad \times\Big( e^{ C\Theta  \varepsilon^2_N} \Big)^{\frac{1}{5}}   \Big(\mathbb E\Big[\exp\Big\{\frac{C\Theta }{N^2}\Big(\sup_{0\leq t\leq T}\Big|\int_0^t \sum_{j=1}^N g_{ij}^N\sigma \mathrm{d}w_j\Big|\Big)^2\Big\}\Big]\Big)^{\frac{1}{5}}\\
         &\qquad \!\times\!\Big(\mathbb E\Big[\exp\Big\{\frac{C\Theta }{N^4} \Big(\sum_{i=1}^N\Big|\sum_{j=1}^N g_{ij}^N (\xi_j-\mathbb E\xi_j) \Big|\Big)^2\Big\}\Big]\Big)^{\frac{1}{5}}\\
         &\qquad\!\times\!\Big(\mathbb E\Big[\exp\Big\{\frac{C\Theta }{N^4}\Big(\sup_{0\leq t\leq T}\sum_{i=1}^N\Big|\int_0^t \sum_{j=1}^N g_{ij}^N\sigma \mathrm{d}w_j\Big|\Big)^2\Big\}\Big]\Big)^{\frac{1}{5}}.
     \end{aligned}
 \end{equation}

For the first term of the right-hand side of \eqref{89511}, using the independence of $\xi_i$ as stated in (H1), we get
 \begin{equation*}
     \begin{aligned}
         \mathbb E\bigg[ \frac{C\Theta }{N^2} \Big|\!\sum_{j=1}^N\! g_{ij}^N (\xi_j\!-\!\mathbb E\xi_j) \Big|^2\bigg]&\!\leq\!\frac{C\Theta }{N^2} \sum_{j=1}^N \mathbb E|\xi_j\!-\!\mathbb E\xi_j|^2 \!\leq\! C\Theta \Big(\frac{1}{N}\!+\!\varepsilon_N^2\Big)\xrightarrow{N\to\infty} 0 ,
     \end{aligned}
 \end{equation*}
 which implies
 \begin{equation}\label{89513}
     \lim_{N\rightarrow \infty} \frac{C\Theta }{N^2} \Big|\sum_{j=1}^N g_{ij}^N(\xi_j-\mathbb E\xi_j) \Big|^2= 0,\qquad \text{in probability}.
 \end{equation}
 Notice that $\xi_j, 1\leq j\leq N$, are independent random variables taking values in a compact set $S_0^x$. Letting $Y_N^i= \sum_{j=1}^Ng_{ij}^N(\xi_j-\mathbb E\xi_j)   $, then by the integration by parts formula and Hoeffding's inequality, for all $\bar K>1$ and $1\leq i\leq N$,  we have
\begin{equation}\label{89512}
    \begin{aligned}
        &\mathbb E\bigg[e^{\frac{C\Theta }{N^2} |Y_N^i|^2}\mathbbm 1_{\big\{e^{\frac{C\Theta }{N^2} |Y_N^i|^2}\geq \bar K\big\}}\bigg]\\
        &\!\leq \!\int_{\bar K}^{\infty} \mathbb P(e^{\frac{C\Theta }{N^2} |Y_N^i|^2 }> y)\mathrm{d}y \!=\!\int_{\bar K}^{\infty} \mathbb P\Big({|Y_N^i| }>\sqrt{\frac{N^2\ln y}{C\Theta }}\Big)\mathrm{d}y\\
        &\!\leq \!\int_{\bar K}^{\infty} 2 \exp\Big(-\frac{N^2\ln y}{C N\Theta }\Big)\mathrm{d}y \!=\!\int_{\bar K}^{\infty} 2y^{-\frac{N}{C\Theta  }}\mathrm{d}y \!=\!\frac{2\bar K^{1-\frac{N}{C \Theta }}}{\frac{N}{C\Theta }-1},
    \end{aligned}
\end{equation}
which tends to $0$ as $\bar K\rightarrow \infty$, uniformly with respect to $N$, implying that the sequence $\{e^{ \frac{C \Theta}{N^2} |Y_N^i|^2}\}_{N\geq \hat N}$ is uniformly integrable. Combining \eqref{89512} with \eqref{89513}, it follows from Vitali convergence theorem (see \cite[pp. 62-63]{CE2015}) that
 \begin{equation}\label{810515}
     \begin{aligned}
           \lim_{N\rightarrow \infty}\mathbb E\Big[\exp\Big\{\frac{C\Theta }{N^2} \Big|\sum_{j=1}^N g_{ij}^N (\xi_j-\mathbb E\xi_j) \Big|^2\Big\}\Big]=1.
     \end{aligned}
 \end{equation}

  For the second term of  the right-hand side of \eqref{89511}, we have
  \begin{equation}\label{810516}
      \lim_{N\rightarrow \infty}  e^{C\Theta  \varepsilon^2_N} \leq  \lim_{N\rightarrow \infty}  e^{C\Theta  (\frac{1}{N}+\varepsilon^2_N)}=1.
  \end{equation}

  For the third term of  the right-hand side of \eqref{89511}, Since $w_i$, $1\leq i\leq N$, are independent standard Brownian motions, $\int_0^t\sum_{j=1}^Ng_{ij}^N\sigma \mathrm{d}w_j$ has its distribution equal to that of $\sqrt{\sum_{j=1}^N(g_{ij}^N)^2}\int_0^t\sigma \mathrm{d}w$, where $w$ is a standard Brownian motion.   Then, recalling  condition \eqref{Thetacondition89},  it follows from the monotonicity of $e^x$ and Doob's maximal inequality for submartingales that, for sufficiently large $N\geq \hat N$,
  \begin{equation*}
      \begin{aligned}
          \mathbb E\Big[\exp\Big\{\frac{2C\Theta}{N^2}\Big(\sup_{0\leq t\leq T}\Big|\int_0^t\! \sum_{j=1}^Ng_{ij}^N\sigma \mathrm{d}w_j\Big|\Big)^2\Big\}\Big]
          &\!\leq \!\mathbb E\Big[\exp\Big\{\frac{2C\Theta }{N} \sup_{0\leq t\leq T} \Big|\int_0^t\sigma \mathrm{d}w\Big|^2\Big\}\Big]\\
          &\!\leq \!4\mathbb E\Big[\exp\Big\{\frac{2C\Theta }{N}  \Big|\int_0^T\sigma \mathrm{d}w\Big|^2\Big\}\Big]\\
          &\!=\!4\Big[\det\Big(I_n-\frac{4C\Theta }{N}\int_0^T\sigma\sigma^{\top}\mathrm{d}t\Big)\Big]^{-\frac{1}{2}}\\
          &\!=\!4+o(1),
      \end{aligned}
  \end{equation*}
which implies that the sequence $\{e^{\frac{C\Theta }{N^2} (\sup_{0\leq t\leq T} |\int_0^t\! \sum_{j=1}^N g_{ij}^N\sigma \mathrm{d}w_j | )^2}\}_{N\geq \hat N}$ is uniformly integrable. Moreover, by Burkholder-Davis-Gundy inequality, we have
\begin{equation*}
    \begin{aligned}
        \mathbb E\bigg[\frac{C\Theta }{N^2}\Big(\sup_{0\leq t\leq T}\Big|\int_0^t\sum_{j=1}^Ng_{ij}^N\sigma \mathrm{d}w_j\Big|\Big)^2\bigg]\leq \frac{C\Theta }{N}\int_0^T|\sigma|^2\mathrm{d}t\xrightarrow{N\to\infty} 0,
    \end{aligned}
\end{equation*}
which implies that
\begin{equation*}
    \begin{aligned}
        \lim_{N\rightarrow \infty} \frac{C\Theta }{N^2}\Big(\sup_{0\leq t\leq T}\Big|\int_0^t\! \sum_{j=1}^N\sigma \mathrm{d}w_j\Big|\Big)^2 =0, \quad \text{in probability}.
    \end{aligned}
\end{equation*}
Then, using Vitali convergence theorem again, we have
\begin{equation}\label{810517}
    \begin{aligned}
        \lim_{N\rightarrow \infty} \mathbb E\Big[\exp\Big\{\frac{C\Theta }{N^2}\Big(\sup_{0\leq t\leq T}\Big|\int_0^t\! \sum_{j=1}^N g_{ij}^N\sigma \mathrm{d}w_j\Big|\Big)^2\Big\}\Big]=1.
    \end{aligned}
\end{equation}

For the fourth and fifth terms, estimates of the same order can be derived by a similar argument.  Combining \eqref{89511} with \eqref{810515}-\eqref{810517}, it holds that
\begin{equation*}
    \begin{aligned}
    \mathbb E\big[e^{\Theta\int_0^T|\hat x_i-\bar x_i|^2\mathrm{d}t}\big]\leq \mathbb E\bigg[\exp\bigg\{\Theta T  \Big( \sup_{0\leq t\leq T} |\hat x_i-\bar x_i|^2 \Big)\bigg\}\bigg]=1+o(1),
    \end{aligned}
\end{equation*}
uniformly with respect to $i$. Then we obtain the first equation in \eqref{estimateexp89}.  The proof of the second one is similar and omitted here.
\end{proof}

For the processes $(x,y,u)$ on $[0,T]$, define
\begin{equation}\label{Lambda}
    \Lambda_T(x,y,u)\!=\!\int_0^T(\|x(t)\!-\!\Gamma y(t)\|_{Q}^2\!+\!\|u(t)\|^2_{R})\mathrm{d}t \!+\!\|x(T)\!-\!\Gamma_f y(T)\|_{Q_f}^2,
\end{equation}
and we further obtain the following estimate.
\begin{lemma}\label{lem34}
  Under (H1)-(H7), for all deterministic positive function $\Theta=f(N)$, $N\in \mathbb N$,  satisfying
  \begin{equation}\label{Thetacondition2}
    \begin{aligned}
     f(N)=o\bigg({1}/\Big({\sqrt{\frac{1}{N}+\varepsilon_N^2}}\bigg)\Big),
    \end{aligned}
  \end{equation}
   we have
  \begin{equation}\label{OTheta}
    \begin{aligned}
      &\sup_{1\leq i\leq N}\mathbb E\bigg[\exp\Big\{\Theta \big|\Lambda_T(\hat x_i, \hat x^{(N)}_i, \hat u_i)-\Lambda_T(\bar x_i, z_i, \bar u_i)\big|\Big\}\bigg]=1+o(1).
    \end{aligned}
  \end{equation}
\end{lemma}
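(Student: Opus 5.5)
The plan is to reduce \eqref{OTheta} to Lemma \ref{lem3389} by bounding the difference of the two quadratic functionals by terms of the form (difference)$\times$(state size) and then splitting each product with a weighted Young inequality.

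\textbf{Step 1: pointwise bound on the difference.} Write $a=\hat x_i-\Gamma\hat x_i^{(N)}$ and $b=\bar x_i-\Gamma z_i$. Then
\[
\|a\|_Q^2-\|b\|_Q^2=\langle Q(a-b),a-b\rangle+2\langle Q(a-b),b\rangle ,
\]
and similarly for the control and terminal terms. Since $\hat u_i-\bar u_i=-R^{-1}B^\top\Pi(\hat x_i-\bar x_i)$ and all coefficients are bounded, we get
\[
|a-b|\le C\big(|\hat x_i-\bar x_i|+|\hat x_i^{(N)}-z_i|\big),\qquad |\hat u_i-\bar u_i|\le C|\hat x_i-\bar x_i|.
\]
Put $\Delta(t)=|\hat x_i-\bar x_i|+|\hat x_i^{(N)}-z_i|$. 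Using the boundedness of $z_i$ and $S_{I_i^*}$, we obtain
\[
|\Lambda_T(\hat x_i,\hat x^{(N)}_i,\hat u_i)-\Lambda_T(\bar x_i,z_i,\bar u_i)|\le C\int_0^T\big(\Delta^2+\Delta(1+|\bar x_i|)\big)\,\mathrm dt+C\big(\Delta(T)^2+\Delta(T)(1+|\bar x_i(T)|)\big).
\]
The terminal terms are handled by replacing $\int_0^T$ with $\sup_t$. Lemma \ref{lem422} already bounds $\sup_t\Delta$, and the proof of Lemma \ref{lem3389} actually controls $\mathbb E\exp\{\Theta T\sup_t|\cdot|^2\}$, so sup versions are available.

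\textbf{Step 2: weighted Young inequality.} For a parameter $\lambda_N>0$, use
\[
\Delta(1+|\bar x_i|)\le \tfrac{\lambda_N}{2}\Delta^2+\tfrac{1}{2\lambda_N}(1+|\bar x_i|)^2 .
\]
Applying H\"older's inequality, the expectation of the exponential is bounded by a product of two factors:
\[
\Big(\mathbb E\, e^{C\Theta(1+\lambda_N)\sup_t\Delta^2}\Big)^{1/2}\Big(\mathbb E\, e^{C(\Theta/\lambda_N)\sup_t(1+|\bar x_i|)^2}\Big)^{1/2}.
\]
Choose $\lambda_N=\sqrt{\Theta}/\delta_N^{1/2}$, where $\delta_N=\sqrt{1/N+\varepsilon_N^2}$. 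Condition \eqref{Thetacondition2} says $\Theta\delta_N\to0$. Hence $\Theta\lambda_N\delta_N^2=\Theta^{3/2}\delta_N^{3/2}\to0$, so $\Theta(1+\lambda_N)=o(1/\delta_N^2)$ and the first factor is $1+o(1)$ by Lemma \ref{lem3389}. Also $\Theta/\lambda_N=\sqrt{\Theta\delta_N}\to0$. In the degenerate case where $\Theta$ stays bounded, simply take $\lambda_N=\delta_N^{-1}$.

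\textbf{Step 3 (main obstacle): the second factor.} We need $\mathbb E\exp\{\eta_N\sup_t|\bar x_i|^2\}\to1$ uniformly in $i$ whenever $\eta_N\to0$. From \eqref{destate}, $\bar x_i$ is a bounded deterministic drift term plus $\Psi$-transformed Brownian noise plus a compact initial state. Hence
\[
\sup_t|\bar x_i|\le C\Big(1+\sup_t\Big|\int_0^t\Psi_z(t,s)\sigma\,\mathrm dw_i\Big|\Big),
\]
and writing $\Psi_z(t,s)=\Psi_z(t,0)\Psi_z(0,s)$ reduces this to the running maximum of a Gaussian martingale. As in Lemma \ref{lem3389}, Doob's inequality together with the Gaussian moment generating function gives $\mathbb E\exp\{\eta\sup_t|\cdot|^2\}<\infty$ for small $\eta$, with a bound uniform in $i$ since the laws are identical. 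Uniform integrability, convergence in probability of $\eta_N\sup_t|\bar x_i|^2$ to $0$, and the Vitali theorem then give a limit of $1$.

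\textbf{Conclusion.} Combining the factors yields an upper bound $1+o(1)$, and the trivial lower bound $\ge1$ gives \eqref{OTheta}. The delicate point is the choice of $\lambda_N$ that balances the two factors. This balance is exactly why \eqref{Thetacondition2} is the square root of \eqref{Thetacondition89}.
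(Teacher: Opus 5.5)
Your proposal is correct and follows essentially the same route as the paper. Both expand the quadratic differences and split each cross term $(\text{error})\times|\bar x_i-\Gamma z_i|$ with a weighted Young inequality. The error-squared factor is handled by the $\sup_t$ version of Lemma \ref{lem3389}, and the state-size factor by the Gaussian bound on $\sup_t|\int_0^t\Psi_z(0,s)\sigma\,\mathrm{d}w_i|$ together with Vitali's theorem.

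The only difference is the weight. The paper uses $\tilde\epsilon=\sqrt{1/N+\varepsilon_N^2}$, which is your $\lambda_N=\delta_N^{-1}$; with that choice both requirements reduce exactly to $\Theta\delta_N\to 0$, so your separate degenerate case is unnecessary. Your choice $\lambda_N=\sqrt{\Theta/\delta_N}$ is equally valid.
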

\begin{proof}
    Recalling \eqref{Lambda}, we have
  \begin{equation*}
    \begin{aligned}
      &\big|\Lambda_T(\hat x_i, \hat x^{(N)}_i, \hat u_i)-\Lambda_T(\bar x_i, z_i, \bar u_i)\big|\\
      &\leq \int_0^T \Big\{\big|\|\hat x_i-\Gamma\hat x^{(N)}_i\|_Q^2 -\|\bar x_i-\Gamma z_i\|_Q^2\big|+\big|\|\hat u_i\|^2_R-\|\bar u_i\|_R^2\big|\Big\}\mathrm{d}t\\
      &\qquad\quad +\big|\|\hat x_i(T)-\Gamma_f\hat x^{(N)}_i(T)\|_{Q_f}^2 -\|\bar x_i(T)-\Gamma_f z_i(T)\|_{Q_f}^2\big|.
    \end{aligned}
  \end{equation*}
  Using  H\"older's inequality, we get
  \begin{equation}\label{23510}
      \begin{aligned}
          &\mathbb E\Big[\exp\Big\{\Theta\big|\Lambda_T(\hat x_i, \hat x_i^{(N)},\hat u_i)-\Lambda_T(\bar x_i,z_i,\bar u_i)\big|\Big\}\Big]\\
          &\leq \Big(\mathbb E\Big[e^{3\Theta\int_0^T  \big|\|\hat x_i-\Gamma\hat x^{(N)}_i\|_Q^2 -\|\bar x_i-\Gamma z_i\|_Q^2 \big|\mathrm{d}t}\Big]\Big)^{\frac{1}{3}}\Big(\mathbb E\Big[e^{3\Theta\int_0^T  \big|\|\hat u_i\|^2_R-\|\bar u_i\|_R^2 \big|\mathrm{d}t}\Big]\Big)^{\frac{1}{3}}\\
          &\qquad\quad \times \Big(\mathbb E\Big[e^{3\Theta\big|\|\hat x_i(T)-\Gamma_f\hat x^{(N)}_i(T)\|_{Q_f}^2 -\|\bar x_i(T)-\Gamma_f z_i(T)\|_{Q_f}^2\big|}\Big]\Big)^{\frac{1}{3}}.
      \end{aligned}
  \end{equation}
  Noticing $\langle Q x,x\rangle -\langle Q y,y\rangle =\langle Q(x-y), x-y\rangle +2\langle Q(x-y), y\rangle $ and using  H\"older's inequality, we have
  \begin{equation}\label{1234}
    \begin{aligned}
      &\mathbb E\big[e^{3\Theta \int_0^T |\|\hat x_i-\Gamma\hat x_i^{(N)}\|^2_Q-\|\bar x_i-\Gamma z_i\|^2_Q |\mathrm{d}t}\big]\\
      &\!\leq\! \mathbb E\big[e^{6\Theta \int_0^T \{\|\hat x_i-\bar x_i\|_{Q}^2+\|\hat x_i^{(N)}-z_i\|_{\Gamma^{\top}\! Q\Gamma}^2  + |\langle Q[\hat x_i-\bar x_i -\Gamma( \hat x_i^{(N)}-z_i) ],\bar x_i-\Gamma z_i\rangle |  \}\mathrm{d}t}\big]\\
      &\!\leq\! \Big\{\mathbb E\Big[e^{ C\Theta \int_0^T |\hat x_i-\bar x_i|^2\mathrm{d}t}\Big]\Big\}^{\frac{1}{4}} \Big\{\mathbb E\Big[e^{ C\Theta \int_0^T \big|\langle \hat x_i-\bar x_i ,\bar x_i-\Gamma z_i\rangle\big|\mathrm{d}t}\Big]\Big\}^{\frac{1}{4}}\\
      & \qquad\quad\times \Big\{\mathbb E\Big[e^{ C\Theta \int_0^T|\hat x_i^{(N)}-z_i|^2\mathrm{d}t}\Big]\Big\}^{\frac{1}{4}}\Big\{\mathbb E\Big[e^{ C\Theta \int_0^T\big|
      \langle \hat x_i^{(N)}\!-\!z_i ,\bar x_i-\Gamma z_i\rangle\big| \mathrm{d}t}\Big]\Big\}^{\frac{1}{4}},
    \end{aligned}
  \end{equation}
  where $C>0$ is  a generic constant that does not depend on $N$ and may vary from line to line. By virtue of Lemma \ref{lem3389}, we know that
  \begin{equation}\label{first}
    \begin{aligned}
      &\Big\{\mathbb E\Big[e^{ C\Theta \int_0^T |\hat x_i-\bar x_i|^2\mathrm{d}t}\Big]\Big\}^{\frac{1}{4}}\Big\{\mathbb E\Big[e^{ C\Theta \int_0^T|\hat x_i^{(N)}-z_i|^2\mathrm{d}t}\Big]\Big\}^{\frac{1}{4}} =1+o(1).
    \end{aligned}
  \end{equation}

  Next, we analyze the other two factors in \eqref{1234}. Recalling the fundamental solution matrices $\Psi_z(t,s)$, we get
  \begin{equation*}
    \begin{aligned}
     \bar x_i(t)=\Psi_z(t,0)\xi_i-\int_0^t\Psi_z(t,s)(BR^{-1}B^{\top}S_{I_i^*}-Dz_i)\mathrm{d}s+\int_0^t\Psi_z(t,s)\sigma\mathrm{d}w_i.
    \end{aligned}
  \end{equation*}
 By the GMFG equation system \eqref{CCR} and Theorem \ref{thm2.4}, we know that $(z_i,S_{I_i^*})\in C([0,T];\mathbb R^n)\times C([0,T];\mathbb R^n)$, which implies that $(z_i,S_{I_i^*})$ is uniformly bounded with respect to all $i$. Since the initial state $\xi_i$ takes value in a compact set $S_0^x$ and $\Psi_z(t,s)=\Psi_z(t,0)\Psi_z(0,s)$, we have
 \begin{equation*}
     \begin{aligned}
         \sup_{0\leq t\leq T}|\bar x_i-\Gamma z_i|\leq C+\sup_{0\leq t\leq T}C\Big|\int_0^t\Psi_z(0,s)\sigma \mathrm{d}w_i\Big|.
     \end{aligned}
 \end{equation*}
 Using H\"older's inequality and a method similar to the proof of Lemma \ref{lem3389}, taking $\tilde\epsilon=\sqrt{\frac{1}{N}+\varepsilon_N^2}$, then we obtain
  {\allowdisplaybreaks
      \begin{align}
          &\!\mathbb E\bigg[\exp\Big( C \Theta T\cdot \sup_{0\leq t\leq T}|\langle \hat x_i\!-\!\bar x_i, \bar x_i\!-\!\Gamma z_i\rangle |\Big)\bigg] \notag\\
          &\!\leq\! \mathbb E\Big[\exp\Big(\frac{ C_2\Theta T}{2\tilde\epsilon}\cdot \sup_{0\leq t\leq T}\big|\hat x_i\!-\!\bar x\big|^2\!+\!\frac{ \tilde\epsilon C\Theta T }{2}\cdot\sup_{0\leq t\leq T}\big|\bar x_i\!-\!\Gamma z_i\big|^2\Big)\Big]\notag\\
          &\!\leq\! \Big\{\mathbb E\Big[\exp\Big(\frac{ C\Theta T}{\tilde\epsilon}\cdot \sup_{0\leq t\leq T}\big|\hat x_i\!-\!\bar x_i\big|^2\Big)\Big]\Big\}^{\frac{1}{2}} \Big\{\mathbb E\Big[\exp\Big( \tilde\epsilon C\Theta T\cdot \sup_{0\leq t\leq T}\big|\bar x_i\!-\!\Gamma z_i\big|^2\Big)\Big]\Big\}^{\frac{1}{2}}\notag\\
          &\!\leq\!  [1+o(1)] \Big\{\mathbb E\Big[\exp\Big\{ \tilde\epsilon C\Theta T \Big[C^2+\Big(\sup_{0\leq t\leq T}C\Big|\int_0^t\Psi_z(0,s)\sigma \mathrm{d}w_i\Big|\Big)^2\Big]\Big\}\Big]\Big\}^{\frac{1}{2}}\notag\\
          &\!\leq\!  [1+o(1)] e^{\tilde\epsilon C \Theta T} \Big\{\mathbb E\Big[\exp\Big\{ \tilde\epsilon C \Theta T  \Big(\sup_{0\leq t\leq T}\Big|\int_0^t\Psi_z(0,s)\sigma \mathrm{d}w_i\Big|\Big)^2 \Big\}\Big]\Big\}^{\frac{1}{4}}\notag\\
          &\!=\! 1+o(1),\notag
      \end{align}}\noindent
      where the second to last line is due to Vitali convergence theorem and
      \begin{equation*}
          \begin{aligned}
              &\lim_{N\rightarrow \infty} \tilde\epsilon C \Theta T  \Big(\sup_{0\leq t\leq T}\Big|\int_0^t\Psi_z(0,s)\sigma \mathrm{d}w_i\Big|\Big)^2 =0,\quad \text{in probability}, \\
              &\sup_{N\geq \hat N}\mathbb E\Big[\exp\Big\{2 \tilde\epsilon C \Theta T  \Big(\sup_{0\leq t\leq T}\Big|\int_0^t\Psi_z(0,s)\sigma \mathrm{d}w_i\Big|\Big)^2 \Big\}\Big]\leq 4+o(1).
          \end{aligned}
      \end{equation*}
      Similarly, we also have
      \begin{equation*}
          \begin{aligned}
              \mathbb E\bigg[\exp\Big( C\Theta T \cdot \sup_{0\leq t\leq T} |\langle \hat x_i^{(N)}-z_i ,\bar x_i-\Gamma z_i\rangle| \Big)\bigg]=1+o(1).
          \end{aligned}
      \end{equation*}
Making use of the above estimates, we further obtain
  \begin{equation}\label{second}
    \begin{aligned}
      &\left\{\mathbb E\left[e^{ C\Theta \int_0^T |\langle \hat x_i-\bar x_i ,\bar x_i-\Gamma z_i\rangle|\mathrm{d}t}\right]\right\}^{\frac{1}{4}}   \left\{\mathbb E\left[e^{ C\Theta \int_0^T
      |\langle \hat x_i^{(N)}-z_i ,\bar x_i-\Gamma z_i\rangle| \mathrm{d}t}\right]\right\}^{\frac{1}{4}}\\
      &\!\leq \!\Big\{\mathbb E\Big[\exp\Big\{C\Theta T\Big(\sup_{0\leq t\leq T}\big|\langle \hat x_i-\bar x_i, \bar x_i-\Gamma z_i \rangle\big|\Big)\Big\}\Big]\Big\}^{\frac{1}{4}}\\
      &\qquad \times \Big\{\mathbb E\Big[\exp\Big\{C\Theta T\Big(\sup_{0\leq t\leq T}\big|\langle \hat x_i^{(N)}-z_i, \bar x_i-\Gamma z_i \rangle\big|\Big)\Big\}\Big]\Big\}^{\frac{1}{4}}\\
      &=1+o(1).
    \end{aligned}
  \end{equation}
  Combining \eqref{first} with \eqref{second}, we have
  \begin{equation}\label{mid}
    \begin{aligned}
      &\mathbb E\big[e^{3\Theta \int_0^T |\|\hat x_i-\Gamma\hat x_i^{(N)}\|^2_Q-\|\bar x_i-\Gamma z_i\|^2_Q |\mathrm{d}t}\big]=1+o(1).
    \end{aligned}
  \end{equation}
  Applying a similar method as used in the proof of \eqref{mid}, we also obtain
   \begin{equation}  \label{control term}
      \begin{aligned}
          \mathbb E\big[e^{3\Theta\int_0^T |\|\hat u_i\|^2_R-\|\bar u_i\|_R^2 |\mathrm{d}t}\big]&=1+o(1), \\
          \mathbb E\Big[e^{3\Theta\big|\|\hat x_i(T)-\Gamma_f\hat x^{(N)}_i(T)\|_{Q_f}^2 -\|\bar x_i(T)-\Gamma_f z_i(T)\|_{Q_f}^2\big| }\Big]&=1+o(1).
      \end{aligned}
      \end{equation}
  Combining \eqref{23510} with \eqref{mid}-\eqref{control term}, we  obtain \eqref{OTheta}.
\end{proof}

Suppose that agent $\mathcal A_i$ chooses an alternative feedback control law $u_i\in \mathcal U_c^i$, while the other agents $\mathcal A_j$, $j\ne i$, still adopt the strategies $\hat u_j$ in \eqref{centralized control law}. The resulting closed-loop state processes are given by
\begin{equation}\label{pstate}\left\{\begin{aligned}
  &\mathrm{d} x_i\!=\!(A x_i\!+\!B u_i\!+\!D x^{(N)}_i)\mathrm{d}t\!+\!\sigma \mathrm{d}w_i,\! \quad \!&&x_i(0)\!=\!\xi_i,\\
  &\mathrm{d} x_j\!=\![Ax_j\!-\!BR^{-1}B^{\top}(\Pi x_j\!+\!S_{I_j^*}) \!+\!D x^{(N)}_j]\mathrm{d}t\!+\!\sigma \mathrm{d}w_j, \!\quad\! &&x_j(0)\!=\!\xi_j,\!\quad \!j\ne i,
\end{aligned}\right.\end{equation}
where $x^{(N)}_i=\frac{1}{N}\sum_{l=1}^Ng^N_{il} x_l$. The cost functional of agent $\mathcal A_i$ is
\begin{equation*}
  \begin{aligned}
    &\mathcal J_i( u_i,\hat u_{-i})\!
    =\!\mathbb E\Big[\!\exp\Big({\gamma}\Big\{\!\int_0^T\!\!\big[\|x_i\!-\!\Gamma x^{(N)}_i\|_{Q}^2 \!+\!\|u_i\|^2_{R}\big]\mathrm{d}t \!+\!\| x_i(T)\!-\!\Gamma_f x^{(N)}_i(T)\|_{Q_f}^2\Big\}\Big)\Big].
  \end{aligned}
\end{equation*}

Let $u_{i}^{cl}(t,\omega)$, as a stochastic process adapted to $\mathcal F_t$, be the closed-loop control inputs generated by the above control law $u_i$. Based on \eqref{lstate}, we introduce the auxiliary  mean-field limit dynamics for each agent:
\begin{equation}\label{perturstate}\left\{
  \begin{aligned}
    &\mathrm{d} x_i^{\infty}=(A x_i^{\infty}+Bu_i^{cl}+Dz_i)\mathrm{d}t+\sigma\mathrm{d}w_i,\quad  x_i^{\infty}(0)=\xi_i,\\
    &\mathrm{d} x_j^{\infty}=[Ax_j^{\infty}-BR^{-1}B^{\top}(\Pi x_j^{\infty}+S_{I_j^*})+Dz_j]\mathrm{d}t+\sigma\mathrm{d}w_j,\!\quad \!x_j^{\infty}(0)=\xi_j,\!\quad\! j\ne i,
  \end{aligned}\right.
\end{equation}
where $z_i=z_{I_i^*}$ is specified by \eqref{CCR} and $u_i^{cl}$ is the closed-loop control inputs generated by the control law $u_i$
 in \eqref{pstate}. The use of the form $u_i^{cl}$ is for convenience of further performance estimate (see the subsequent proof of  Theorem \ref{thmepsilon}).
 Then, by  the method of proving Lemma \ref{lem422}, we obtain the following estimates.
\begin{lemma} \label{lemma56}
  Under (H1)-(H7), for \eqref{pstate} and \eqref{perturstate}, there exists a constant $\bar C$ such that
        \begin{align}
      &\sup_{1\leq i\leq N} \sup_{0\leq t\leq T}| x^{(N)}_i-z_i| \!\leq \!\bar C\Big(\frac{1}{N}\!+\!\varepsilon_N\Big)\!+\!\frac{\bar C}{N}\Big|\sum_{j=1}^N g_{ij}^N(\xi_j-\mathbb E\xi_j)\Big|\label{estimate12}\\
      &\qquad\quad\!+\!\frac{\bar C}{N}\int_0^T|u_i^{cl}|\mathrm{d}t\!+\! \sup_{0\leq t\leq T} \frac{\bar C}{N}\Big|\int_0^t \sigma\mathrm{d}w_i\Big|\!+\!\sup_{0\leq t\leq T} \frac{\bar C}{N}\Big|\int_0^t\sum_{j=1}^Ng_{ij}^N\sigma\mathrm{d}w_j\Big|\notag\\
      &\qquad\quad \!+\!\frac{\bar C}{N^2}\sum_{i=1}^N\Big|\sum_{j=1}^N g_{ij}^N(\xi_j-\mathbb E\xi_j)\Big|\!+\!\sup_{0\leq t\leq T} \frac{\bar C}{N^2}\sum_{i=1}^N\Big|\int_0^t\sum_{j=1}^Ng_{ij}^N\sigma\mathrm{d}w_j\Big|, \notag\\
            &\sup_{1\leq i\leq N} \sup_{0\leq t\leq T}| x_i- x^{\infty}_i| \!\leq \!\bar C\Big(\frac{1}{N}\!+\!\varepsilon_N\Big)\!+\!\sup_{1\leq i\leq N}\frac{\bar C}{N}\Big|\sum_{j=1}^N g_{ij}^N (\xi_j-\mathbb E\xi_j)\Big|\label{estimate22}\\
      &\qquad\quad\!+\!\frac{\bar C}{N}\int_0^T|u_i^{cl}|\mathrm{d}t\!+\! \sup_{0\leq t\leq T}\frac{\bar C}{N} \Big|\int_0^t \sigma\mathrm{d}w_i\Big|\!+\! \sup_{0\leq t\leq T} \frac{\bar C}{N}\Big|\int_0^t\sum_{j=1}^N\sigma\mathrm{d}w_j\Big|\notag\\
      &\qquad\quad \!+\!\frac{\bar C}{N^2}\sum_{i=1}^N\Big|\sum_{j=1}^N g_{ij}^N(\xi_j-\mathbb E\xi_j)\Big|\!+\!\sup_{0\leq t\leq T} \frac{\bar C}{N^2}\sum_{i=1}^N\Big|\int_0^t\sum_{j=1}^Ng_{ij}^N\sigma\mathrm{d}w_j\Big|. \notag
        \end{align}
\end{lemma}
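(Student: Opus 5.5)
I will follow the structure of the proof of Lemma \ref{lem422}, with one change: agent $\mathcal A_i$ now uses an arbitrary feedback law $u_i\in\mathcal U_c^i$. Its effect on the weighted averages enters only through one term of size $O(1/N)$. The key device is the \emph{unperturbed} system \eqref{fstate}, in which everyone uses $\hat u$, as an intermediate object.

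\textbf{Step 1.} I first compare \eqref{pstate} with \eqref{fstate}. Set $\Delta_j=x_j-\hat x_j$. For $j\neq i$,
\[
\dot\Delta_j=(A-BR^{-1}B^{\top}\Pi)\Delta_j+\frac{D}{N}\sum_l g^N_{jl}\Delta_l ,
\]
while
\[
\dot\Delta_i=A\Delta_i+B(u_i^{cl}-\hat u_i)+\frac DN\sum_l g^N_{jl}\Delta_l .
\]
All $\Delta_j(0)=0$, and $g^N_{jl}\in[0,1]$. Summing $|\Delta_j|$ over $j\ne i$ and applying Gronwall gives
\[
\frac1N\sum_{j}|\Delta_j|\le \frac CN\sup_t|\Delta_i|.
\]
From the $i$-th equation,
\[
\sup_t|\Delta_i|\le C\Big(\int_0^T|u_i^{cl}|dt+1+\sup_t|\hat x_i|\Big).
\]
Here $\sup_t|\hat x_i|\le C+C\sup_t|\int_0^t\sigma dw_i|+C\sup_t|\hat x_i^{(N)}|$, and Lemma \ref{lem422} controls $|\hat x_i^{(N)}|$. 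Hence
\[
\sup_t|x_j^{(N)}-\hat x_j^{(N)}|\le \frac1N\sum_l|\Delta_l|\le \frac{C}{N}\Big(1+\int_0^T|u_i^{cl}|dt+\sup_t\Big|\int_0^t\sigma dw_i\Big|\Big)+\frac CN\,(\text{terms of Lemma \ref{lem422}}),
\]
uniformly in $j$. The Lemma \ref{lem422} terms appearing here already carry the factor $1/N$, so they are dominated by the terms already listed in \eqref{estimate12}. Combining this with \eqref{Eq32} through the triangle inequality gives \eqref{estimate12}.

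\textbf{Step 2.} For \eqref{estimate22}, subtract \eqref{perturstate} from \eqref{pstate}. For $j\ne i$ the difference satisfies
\[
\frac{d}{dt}(x_j-x_j^\infty)=(A-BR^{-1}B^{\top}\Pi)(x_j-x_j^\infty)+D(x_j^{(N)}-z_j).
\]
For agent $i$ the control terms cancel, since both sides use the same input process $u_i^{cl}$; this is exactly why $u_i^{cl}$ was introduced. That leaves
\[
\frac{d}{dt}(x_i-x_i^\infty)=A(x_i-x_i^\infty)+D(x_i^{(N)}-z_i).
\]
Gronwall then bounds $\sup_t|x_j-x^\infty_j|\le C\sup_t|x_j^{(N)}-z_j|$, and \eqref{estimate12} yields \eqref{estimate22}.

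\textbf{Main obstacle.} The delicate point is Step 1's claim that the deviation of one agent shifts every weighted average by only $O(1/N)$ times quantities depending on $i$ alone. This needs the row-sum bound $\frac1N\sum_l g^N_{jl}|\Delta_l|\le\frac1N|\Delta_i|+\frac1N\sum_{l\ne i}|\Delta_l|$ and a Gronwall argument on the averaged quantity, done with a constant uniform in $N$. The constant must also absorb $\sup_t|\hat x_i|$ into the listed noise terms, plus an $O(1/N)$ constant, which is the source of the $\bar C/N$ contribution.
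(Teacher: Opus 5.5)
Your proof is correct, but it is organized differently from the paper's. The paper does not bring the unperturbed system \eqref{fstate} into this proof. It writes the SDE \eqref{7185282} for $x_i^{(N)}-z_i$ directly in the deviated system and splits the control part of the drift into two pieces. The first is the single off-law contribution $\frac{B}{N}g^N_{ii}u_i^{cl}+\frac{BR^{-1}B^{\top}}{N}g^N_{ii}(\Pi x_i+S_{I_i^*})$, which is $O(1/N)$ times $|u_i^{cl}|+|x_i|+1$. The second is the prescribed feedback evaluated at all the (deviated) states, which is bounded as in Lemma \ref{lem422}. The paper then reruns the Gronwall argument on $\frac1N\sum_j|x_j^{(N)}-z_j|$.

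You instead split $x_i^{(N)}-z_i=(x_i^{(N)}-\hat x_i^{(N)})+(\hat x_i^{(N)}-z_i)$. The first piece is a purely finite-$N$ comparison: a unilateral deviation shifts every weighted average by at most $\frac CN\sup_t|\Delta_i|$. The second piece is handled by \eqref{Eq32} used as a black box.

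Your route is more modular, and it makes the control of the deviating agent's own state transparent. The quantity you need, $\sup_t|\hat x_i|$, comes from a system that does not depend on $u_i$, and Lemma \ref{lem422} handles it. The paper must instead bound $|x_i|$ inside the deviated system, where $x_i$ is coupled to $x_i^{(N)}$. The paper's one-line bound on $|x_i(s)|$ suppresses this coupling, which is harmless only because the term carries a factor $1/N$. The paper's route, in turn, is self-contained, needs no auxiliary system, and follows the same template as Lemma \ref{lem422}.

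Your Step 2 is the argument the paper leaves as ``similarly''. As in the paper, both bounds are really statements for the deviating index $i$, which is how they are used in \eqref{eTTxx}. One slip: in your equation for $\dot\Delta_i$, the coupling term should be $\frac DN\sum_l g^N_{il}\Delta_l$.
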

\begin{proof}
    By \eqref{CCR} and \eqref{pstate}, it holds that
    \begin{equation}\label{7185282}
        \left\{
        \begin{aligned}
            &\!\mathrm{d}(x_i^{(N)}\!-\!z_i)\!=\!\Big[A(x_i^{(N)}\!-\!z_i)\!+\!\frac{B}{N}g_{ii}^Nu_i^{cl}\!-\!\frac{BR^{-1}B^{\top}}{N}\!\!\sum_{j\ne i}g_{ij}^N(\Pi x_j\!+\!S_{I_j^*})\!+\!\frac{D}{N}\sum_{j=1}^Ng_{ij}^Nx_j^{(N)}\\
            &\!\qquad\quad \!-\!B\int_0^1g(I_i^*,\beta) \mathbb E\bar u_{\beta}\mathrm{d}\beta\!-\!D \int_0^1g(I_i^*,\beta)z_{\beta}\mathrm{d}\beta  \Big]\mathrm{d}t\!+\!\frac{1}{N}\sum_{j=1}^N g_{ij}^N\sigma\mathrm{d}w_j,\\
            &\!x_i^{(N)}(0)\!-\!z_i(0)\!=\!\frac{1}{N}\sum_{j=1}^Ng_{ij}^N\xi_j\!-\! \int_0^1g(I_i^*,\beta)m_{\beta}^x\mathrm{d}\beta.
        \end{aligned}
        \right.
    \end{equation}
    In the following estimates, we again use $C$ to denote a generic constant that does not depend on $N$ and may vary from line to line. Recalling the control laws $\bar u_i$ in \eqref{decentralized control law} and $\bar u_{\alpha}$ in \eqref{law}, we have
    {\allowdisplaybreaks
        \begin{align}
        &\int_0^s\Big|\frac{B}{N}g_{ii}^Nu_i^{cl}\!-\!\frac{BR^{-1}B^{\top}}{N}\sum_{j\ne i}g_{ij}^N(\Pi x_j\!+\!S_{I_j^*})\!-\!B\int_0^1g(I_i^*,\beta) \mathbb E\bar u_{\beta}\mathrm{d}\beta\Big|\mathrm{d}t \notag\\
        &\!\leq\!\int_0^s\Big|\frac{B}{N}g_{ii}^Nu_i^{cl}\!+\!\frac{BR^{-1}B^{\top}}{N}g_{ii}^N(\Pi x_i\!+\!S_{I_i^*})\Big|\mathrm{d}t \notag\\
            &\qquad\!+\!\int_0^s\Big| \frac{BR^{-1}B^{\top}}{N}\sum_{j=1}^Ng_{ij}^N(\Pi x_j\!+\!S_{I_j^*})\!-\!BR^{-1}B^{\top} \int_0^1g(I_i^*,\beta)(\Pi\mathbb E\bar x_{\beta}\!+\!S_{\beta})\mathrm{d}\beta \Big|\mathrm{d}t \notag\\
            &\!\leq\! \int_0^s\Big|\frac{B}{N}g_{ii}^Nu_i^{cl}\!+\!\frac{BR^{-1}B^{\top}}{N}g_{ii}^N(\Pi x_i\!+\!S_{I_i^*})\Big|\mathrm{d}t +C \int_0^s|x^{(N)}_i-z_i|\mathrm{d}t +C\varepsilon_N^2 \notag\\
           &\!\leq \!\frac{C}{N}\int_0^s|u_i^{cl}|\mathrm{d}t+C \int_0^s|x^{(N)}_i-z_i|\mathrm{d}t\!+\!\frac{C}{N}\sup_{0\leq t\leq s} \Big|\int_0^t\sum_{j=1}^Ng_{ij}^N\sigma\mathrm{d}w_j\Big|\notag\\
           &\qquad\quad\!+\!\frac{C}{N}\sup_{0\leq t\leq s} \Big|\int_0^t  \sigma\mathrm{d}w_i\Big| \!+\!C\Big(\frac{1}{N}+\varepsilon_N\Big) , \notag
        \end{align}
    }\noindent
    where the last inequality is due to $|x_i(s)|\leq C+C \int_0^s|u_i^{cl}|\mathrm{d}t+C|\int_0^s\sum_{j=1}^Ng_{ij}^N\sigma\mathrm{d}w_j|+C|\int_0^s\sigma\mathrm{d}w_i|$.      Moreover, we also have
    {\allowdisplaybreaks
        \begin{align}
        \Big|\frac{1}{N}\sum_{j=1}^Ng^N_{ij}\xi_j\!-\! \int_0^1g(I_i^*,\beta)m^x_{\beta}\mathrm{d}\beta\Big|&\leq \frac{C}{N}\Big|\sum_{j=1}^N g_{ij}^N (\xi_j-\mathbb E\xi_j)\Big| +C\varepsilon_N,\notag \\
            \Big|\frac{D}{N}\sum_{j=1}^Ng_{ij}^Nx_j^{(N)}\!-\!D \int_0^1g(I_i^*,\beta)z_{\beta}\mathrm{d}\beta\Big|&\!\leq \!\frac{C}{N}\sum_{j=1}^N\big|x_j^{(N)}\!-\!z_j\big|\!+\!C\varepsilon_N. \notag
        \end{align}
}
    Thus, by \eqref{7185282}, for all $s\in [0,T]$, we have
    \begin{equation}\label{7185312}
        \begin{aligned}
             &\big|x_i^{(N)}(s)\!-\!z_i(s)\big|\\
             &\!\leq \!C \int_0^s\Big(\big|x_i^{(N)}\!-\!z_i\big| \!+\!\frac{1}{N}\sum_{j=1}^N\big|x_j^{(N)}\!-\!z_j\big| \Big)\mathrm{d}t \!+\!C\Big(\frac{1}{N}\!+\!\varepsilon_N\Big)\!+\!\frac{C}{N}\int_0^s|u_i^{cl}|\mathrm{d}t\\
            &\!+\!\frac{C}{N}\Big|\sum_{j=1}^N g_{ij}^N (\xi_j-\mathbb E\xi_j)\Big| \!+\!\frac{C}{N}\sup_{0\leq t\leq s} \Big|\int_0^t\sum_{j=1}^Ng_{ij}^N\sigma\mathrm{d}w_j\Big|\!+\!\frac{C}{N}\sup_{0\leq t\leq s} \Big|\int_0^t \sigma\mathrm{d}w_i\Big|.
        \end{aligned}
    \end{equation}
    Adding up the above $N$ equations and using Gronwall's inequality, we have
    \begin{equation}\label{7185322}\begin{aligned}
        \frac{1}{N}\sum_{j=1}^N \big|x_j^{(N)}(s)\!-\!z_j(s)\big|&\!\leq \!C\Big(\frac{1}{N}\!+\!\varepsilon_N\Big)\!+\!\frac{C}{N^2}\sum_{i=1}^N\Big|\sum_{j=1}^N g_{ij}^N (\xi_j-\mathbb E\xi_j)\Big|\!+\!\frac{C}{N}\int_0^s|u_i^{cl}|\mathrm{d}t\\
        & \!+\!\frac{C}{N^2}\sup_{0\leq t\leq s} \sum_{i=1}^N\Big|\int_0^t\sum_{j=1}^Ng_{ij}^N\sigma\mathrm{d}w_j\Big|\!+\!\frac{C}{N}\sup_{0\leq t\leq s} \Big|\int_0^t \sigma\mathrm{d}w_i\Big|.
        \end{aligned}
    \end{equation}
    Combining \eqref{7185312} with \eqref{7185322}, we get \eqref{estimate12}. Similarly, \eqref{estimate22} can be proved.
\end{proof}

Based on the above analysis, we now present the key result on performance analysis.
\begin{theorem}\label{thmepsilon}
  Suppose (H1)-(H7) hold. The set of strategies $\hat u=(\hat u_1, \cdots, \hat u_N)$  given by \eqref{centralized control law} is an $\varepsilon$-Nash equilibrium,
  where $\varepsilon =o(1)$ as $N\to \infty$.
\end{theorem}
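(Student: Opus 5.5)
We compare every quantity with the mean-field limit cost $J_{\alpha}$ of Problem (BR) at $\alpha=I_i^*$; write $J_i:=J_{I_i^*}$. The argument has three steps: an upper bound on $\mathcal J_i(\hat u_i,\hat u_{-i})$, a lower bound on $\mathcal J_i(u_i,\hat u_{-i})$ for any deviation $u_i\in\mathcal U_c^i$, and a reduction to deviations with controlled cost.

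\emph{Step 1 (equilibrium cost).} Since $\mathcal J_i(\hat u_i,\hat u_{-i})=\mathbb E[e^{\gamma\Lambda_T(\hat x_i,\hat x_i^{(N)},\hat u_i)}]$, Cauchy--Schwarz gives
\[
\mathcal J_i(\hat u)\le \big(\mathbb E[e^{2\gamma\Lambda_T(\bar x_i,z_i,\bar u_i)}]\big)^{1/2}\big(\mathbb E[e^{2\gamma|\Lambda_T(\hat x_i,\hat x_i^{(N)},\hat u_i)-\Lambda_T(\bar x_i,z_i,\bar u_i)|}]\big)^{1/2}.
\]
This product form is too crude, because it doubles the exponent in the first factor. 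I would instead use H\"older with exponents $p=1+\delta_N$ and $q=p/(p-1)$. Here $\delta_N\to0$ slowly, and $q\gamma$ plays the role of $\Theta$ in Lemma \ref{lem34}; this requires $q=o\big(1/\sqrt{1/N+\varepsilon_N^2}\big)$. The second factor is then $1+o(1)$ uniformly in $i$ by Lemma \ref{lem34}. The first factor is $\mathbb E[e^{p\gamma\Lambda_T(\bar x_i,z_i,\bar u_i)}]^{1/p}$, i.e.\ the limit problem with risk parameter $p\gamma$. For $p$ close to $1$ it is finite: (H4) is an open condition only in the trivial direction, so I would handle this via the explicit Gaussian representation of $\bar x_i$ and continuity of the exponential-quadratic moment in the parameter. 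That continuity shows it equals $J_i(\bar u_i)+o(1)$ uniformly in $i$, using the uniform boundedness of $(z,S)$ and the compactness of $S_0^x$. Hence $\mathcal J_i(\hat u)\le J_i(\bar u_i)+o(1)$.

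\emph{Step 2 (deviation).} Fix $u_i\in\mathcal U_c^i$ with closed-loop input $u_i^{cl}$, and use \eqref{pstate}, \eqref{perturstate}. By the completion-of-squares identity preceding Theorem \ref{baru}, applied to $x_i^\infty$ driven by the adapted input $u_i^{cl}$, we have $\mathbb E[e^{\gamma\Lambda_T(x_i^\infty,z_i,u_i^{cl})}]\ge J_i(\bar u_i)$. Now apply the reverse H\"older inequality: for $p'=1-\delta_N$,
\[
\mathbb E[e^{X}]\ge \mathbb E[e^{p'(X+Y)}]^{1/p'}\,\mathbb E[e^{-\frac{p'}{1-p'}Y}]^{-(1-p')/p'}.
\]
Here $X+Y=\gamma\Lambda_T(x_i,x_i^{(N)},u_i^{cl})$ is the true cost and $Y$ is the difference of the two $\Lambda_T$'s. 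Rearranging bounds $\mathcal J_i(u_i,\hat u_{-i})$ from below by $J_i(\bar u_i)-o(1)$, provided the exponential moment of $\Theta|Y|$ is $1+o(1)$. That would follow from an analogue of Lemma \ref{lem34} built on Lemma \ref{lemma56}.

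\emph{Step 3 (main obstacle).} Lemma \ref{lemma56} contains the term $\frac{\bar C}{N}\int_0^T|u_i^{cl}|\mathrm dt$, which is not controlled a priori. I would first reduce to deviations with $\mathcal J_i(u_i,\hat u_{-i})\le \mathcal J_i(\hat u)\le C_1$; otherwise there is nothing to prove. Since $\gamma\|u_i\|_R^2$ sits in the exponent, this gives $\mathbb E[e^{\gamma c\int_0^T|u_i^{cl}|^2\mathrm dt}]\le C_1$. By Cauchy--Schwarz, $\Theta\frac{\bar C^2}{N^2}\big(\int_0^T|u_i^{cl}|\mathrm dt\big)^2\le \frac{\Theta C}{N^2}\int_0^T|u_i^{cl}|^2\mathrm dt$. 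With $\Theta/N^2\to0$, H\"older (the exponent being a vanishing multiple of $\gamma c\int|u_i^{cl}|^2$) combined with the uniform exponential bound gives uniform integrability, and Vitali's theorem then gives a factor $1+o(1)$. The cross terms $\langle x_i-x_i^\infty, x_i^\infty-\Gamma z_i\rangle$ and the control-difference terms are more delicate, since $x_i^\infty$ is now driven by $u_i^{cl}$. These would be treated by the $\tilde\epsilon$-splitting used in Lemma \ref{lem34}, with $|x_i^\infty|^2$ dominated through Gronwall by $C+C\int|u_i^{cl}|^2+C\sup_t|\int_0^t\sigma\,\mathrm dw_i|^2$, whose $\tilde\epsilon\Theta$-exponential moment is again $1+o(1)$ by the bound above. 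I expect this uniformity over all admissible deviations to be the hardest part. Combining the three steps yields $\mathcal J_i(\hat u)\le \mathcal J_i(u_i,\hat u_{-i})+o(1)$ uniformly in $i$ and $u_i$.
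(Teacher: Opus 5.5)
Your Step 1 and your reduction to deviations with $\mathcal J_i(u_i,\hat u_{-i})\le\widetilde C$ follow the paper. Both use H\"older with exponent $1+\delta_N$ and $\Theta=\gamma(1+\delta_N)/\delta_N$ fed into Lemma \ref{lem34}. Both also use the exponential bound on $\int_0^T|u_i^{cl}|^2\mathrm dt$ to run Lemma \ref{lemma56}. The gap is in Step 2.

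\textbf{The direction of the reverse H\"older step.} With your labelling ($X+Y$ the true cost), the left side of the reverse H\"older inequality is the \emph{limit} cost $\mathbb E[e^{\gamma\Lambda_T(x_i^\infty,z_i,u_i^{cl})}]$. Rearranging then gives an upper bound on $\big(\mathbb E[e^{p'\gamma\Lambda_T(x_i,x_i^{(N)},u_i^{cl})}]\big)^{1/p'}$, not a lower bound on $\mathcal J_i(u_i,\hat u_{-i})$. To lower-bound the true cost it must sit on the left: $X=\gamma\Lambda_T(x_i,x_i^{(N)},u_i^{cl})$ and $X+Y=\gamma\Lambda_T(x_i^\infty,z_i,u_i^{cl})$. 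The main factor is then $\big(\mathbb E[e^{p'\gamma\Lambda_T(x_i^\infty,z_i,u_i^{cl})}]\big)^{1/p'}$ with $p'<1$.

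\textbf{Why your lower bound does not transfer.} Your inequality $\mathbb E[e^{\gamma\Lambda_T(x_i^\infty,z_i,u_i^{cl})}]\ge J_i(\bar u_i)$ says nothing about this factor. By Lyapunov's inequality, $\big(\mathbb E[e^{p'W}]\big)^{1/p'}\le\mathbb E[e^{W}]$, so a bound at parameter $\gamma$ does not pass down to parameter $p'\gamma$. The device of your Step 1 (continuity in the risk parameter of an exponential-quadratic Gaussian moment) does not help either. Here $u_i^{cl}$ is an arbitrary centralized deviation, $x_i^\infty$ is not Gaussian, and the $o(1)$ must be uniform over all such $u_i$.

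\textbf{The missing idea: the paper's Problem (ACP).} Solve the limit problem with the reduced risk parameter $p'\gamma=\gamma/(1+\delta')$. Since $p'<1$, (H4) gives $BR^{-1}B^\top-2p'\gamma\sigma\sigma^\top\ge0$, so \eqref{Riccatidelta}, \eqref{Sdelta} and \eqref{ODEdelta} are solvable. Completion of squares then gives $\mathbb E[e^{p'\gamma\Lambda_T(x_i^\infty,z_i,u_i^{cl})}]\ge J_i^{\delta'}(\bar u_i^{\delta'})$ for \emph{every} adapted $u_i^{cl}$. The right side is given by \eqref{optimalcost2} and does not depend on the deviation. You then need continuous dependence of $(\Pi^{\delta'},S_i^{\delta'},r_i^{\delta'})$ on $\delta'$ to get $[J_i^{\delta_N'}(\bar u_i^{\delta_N'})]^{1+\delta_N'}\to J_i(\bar u_i)$ uniformly in $i$. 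Together with the denominator being $1+o(1)$, this yields \eqref{lower}. Without this perturbed auxiliary problem, your Step 2 does not deliver $\mathcal J_i(u_i,\hat u_{-i})\ge J_i(\bar u_i)-o(1)$.

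A minor point: writing $e^X=e^{X+Y}e^{-Y}$, the correct factor is $\mathbb E[e^{\frac{p'}{1-p'}Y}]^{-(1-p')/p'}$. The sign error is harmless once you bound $\mathbb E[e^{\Theta|Y|}]$.
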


According to Definition \ref{Def5.1} of the  $\varepsilon$-Nash equilibrium, it suffices to show
\begin{equation*}
    \mathcal J_i(\hat u_i, \hat u_{-i})\leq \inf_{u_i\in \mathcal U_c^i}\mathcal J_i(u_i, \hat u_{-i})+\varepsilon.
\end{equation*}
Since the classical method of proving the $\varepsilon$-Nash equilibrium via $L^2$ error estimates alone is inadequate, a new method is applied subsequently. We will identify sufficiently close upper and lower bounds of $\inf_{u_i\in \mathcal U_c^i}\mathcal J_i(u_i, \hat u_{-i})$. Subsequently, we can show that the set of decentralized strategies $\hat u$ is an $\varepsilon$-Nash equilibrium.
\begin{proof}[Proof of Theorem \ref{thmepsilon}]
\textbf{Step 1.} Provided that we can show $\mathcal J_i(\hat u_i, \hat u_{-i})\leq \widetilde C$ for some fixed constant $\widetilde C$ independent of $N\ge \hat N$, then it is sufficient to consider all alternative control laws $u_i\in {\mathcal U}^i_c$ that satisfy
\begin{equation}\label{the boundedness of J}
    \mathcal J_i(u_i, \hat u_{-i})\leq \widetilde C.
\end{equation}
Recalling the definition  of $\Lambda_T(x, y, u)$ in \eqref{Lambda} and using  H\"older's inequality, we obtain
\begin{equation*}
    \begin{aligned}
        \mathcal J_i(\hat u_i, \hat u_{-i})&\!=\! \mathbb E\big[e^{\gamma\Lambda_T(\hat x_i, \hat x_i^{(N)},\hat u_i)}\big]\\
        &\!\leq\! \big(\mathbb E\big[e^{\gamma(1+\delta)\Lambda_T(\bar x_i, z_i, \bar u_i)}\big]\big)^{\frac{1}{1+\delta}}\big(\mathbb E\big[e^{\frac{\gamma(1+\delta)}{\delta}[\Lambda_T(\hat x_i,\hat x_i^{(N)},\hat u_i)-\Lambda_T(\bar x_i,z_i,\bar u_i)]}\big]\big)^{\frac{\delta}{1+\delta}},
    \end{aligned}
\end{equation*}
where $(\bar x_i, \bar u_i)$ is specified by \eqref{decentralized control law}-\eqref{destate} and $z_i=z_{I_i^*}$ is specified by \eqref{CCR}. Selecting $\delta_N=\frac{\gamma \sqrt[4]{\frac{1}{N}+\varepsilon_N^2}}{1-\gamma\sqrt[4]{\frac{1}{N}+\varepsilon_N^2}}$, we see that $\delta_N\rightarrow 0$ as $N\rightarrow \infty$. We will use the fact that $(\mathbb E|X|^{1+\delta})^{\frac{1}{1+\delta}}$ approaches $\mathbb E |X|$ as $\delta\rightarrow 0$, if there exists a positive $\delta_0$ such that $\mathbb E|X|^{1+\delta_0}<\infty$. By Theorem \ref{baru}, we have
\begin{equation*}
    \mathbb E\big[\exp\big\{\gamma\Lambda_T(\bar x_i, z_i, \bar u_i)\big\}\big]=\mathbb E\big[\exp\big(\gamma\big\{\xi_{i}^{\top}\Pi(0)\xi_{i}+2\xi_{i}^{\top}S_{I_i^*}(0)+r_{I_i^*}(0)\big\}\big)\big].
\end{equation*}
In conjunction with (H1) and the uniform boundedness of $(\Pi,S_{I_i^*},r_{I_i^*})$, it follows that $\mathbb E[e^{\gamma(1+\delta_0)\Lambda_T(\bar x_i, z_i, \bar u_i)}]<\infty$ for some positive constant $\delta_0$. Thus
\begin{equation}\label{11959}
    \lim_{N\rightarrow \infty}\Big(\mathbb E\Big[\exp\big\{\gamma(1+\delta_N)\Lambda_T(\bar x_i, z_i, \bar u_i)\big\}\Big]\Big)^{\frac{1}{1+\delta_N}}=\mathbb E\Big[\exp\big(\gamma \Lambda_T(\bar x_i,z_i,\bar u_i)\big)\Big].
\end{equation}
Now we take $\Theta=\frac{\gamma(1+\delta_N)}{\delta_N}=\frac{1}{ \sqrt[4]{\frac{1}{N}+\varepsilon_N^2}}$, and we can verify that condition \eqref{Thetacondition2} holds. By Lemma \ref{lem34}, we get
\begin{equation}\label{119511}
    \mathbb E\Big[\exp\Big\{\frac{\gamma(1+\delta_N)}{\delta_N}\Big|\Lambda_T(\hat x_i, \hat x_i^{(N)},\hat u_i)-\Lambda_T(\bar x_i, z_i, \bar u_i)\Big|\Big\}\Big]= 1+o(1) .
\end{equation}
Then combining \eqref{11959} with \eqref{119511}, it holds that
\begin{equation}\label{upper}
    \begin{aligned}
        \mathcal J_i(\hat u_i, \hat u_{-i})\leq J_i(\bar u_i)[1+o(1)],
    \end{aligned}
\end{equation}
where $\bar u_i$ is given by \eqref{decentralized control law}, and $o(1)\rightarrow 0$ as $N\rightarrow \infty$. Here, $J_i(\bar u_i)=J_{I_i^*}(\bar u_{I_i^*})$, as given by \eqref{optimalcost1}, is uniformly bounded with respect to $N$ under (H1).
Hence, there exists a fixed constant $\widetilde C$ such that $\sup_{1\leq i\leq N}\mathcal J_i(\hat u_i, \hat u_{-i})\le \widetilde C$ indeed holds for all $N\ge \hat N$.
In view of (H3) and \eqref{the boundedness of J}, we have
\begin{equation}\label{bound of eu}
  \mathbb E\big[e^{\gamma\int_0^T\|u_i\|_R^2\mathrm{d}t}\big]\leq \mathcal J_i(u_i, \hat u_{-i})\leq  \widetilde C,
\end{equation}
which implies that, for all sufficiently large $N$, there exists a constant $\widetilde C_1$ (depending on $\widetilde C$ and the uniform upper bound of $R$),
\begin{equation}\label{star48}
      \mathbb E\int_0^T|u_i|^2\mathrm{d}t\leq \frac{\widetilde C_1}{\gamma}.
\end{equation}

\textbf{Step 2.}
For $u_i\in {\mathcal U}_c^i$ satisfying ${\mathcal J}_i(u_i, \hat u_{-i})\le \widetilde C$, 
we aim to find a lower bound of $\mathcal J_i(u_i^{cl}, \hat u_{-i})$, where $u_i^{cl}$ appears in  \eqref{perturstate} and now additionally satisfies \eqref{bound of eu}-\eqref{star48}.

Let $X$ and $Y$ be two random variables.
For any given positive constant $c$, using H\"older's inequality, we have $\mathbb E[e^{cY}]\leq (\mathbb E[e^{\frac{c(1+\delta')}{\delta'}(Y-X)}])^{\frac{\delta'}{1+\delta'}}(\mathbb E[e^{c(1+\delta')X}])^{\frac{1}{1+\delta'}}$ for every $ \delta'>0$. Taking $c=\frac{1}{1+\delta'}$, it holds that $\mathbb E[e^X]\geq {(\mathbb E[e^{\frac{1}{1+\delta'}Y}])^{1+\delta'}}{(\mathbb E[e^{\frac{1}{\delta'}(Y-X)}])^{{-\delta'}}}$. Based on this inequality, we have
\begin{equation}\label{116513}
    \begin{aligned}
        \mathbb E\big[e^{\gamma \Lambda_T(x_i, x_i^{(N)},u_i^{cl})}\big]\!\geq \!\frac{(\mathbb E[e^{\frac{\gamma}{1+\delta'}\Lambda_T(x_i^{\infty},z_i,u_i^{cl})}])^{1+\delta'}}{(\mathbb E[e^{\frac{\gamma}{\delta'}[\Lambda_T(x_i^{\infty},z_i,u_i^{cl})-\Lambda_T( x_i, x^{(N)}_i, u_i^{cl})]}])^{{\delta'}}},
    \end{aligned}
\end{equation}
where the control $u_i^{cl}$ is the same as in \eqref{perturstate}. Note that the alternative control law $u_i$ satisfies \eqref{bound of eu}-\eqref{star48}. { Then in view of Lemma \ref{lemma56}, the estimate in Lemma \ref{lem34} after replacing $\hat u_i$ by $u_i^{cl}$ remains valid (see \eqref{718545}), which guarantees that the numerator and the denominator in \eqref{116513} are not simultaneously $0$ or $\infty$.}

To estimate the numerator in \eqref{116513}, we introduce a new auxiliary control problem (ACP), which considers the further optimization of $u_i^{cl}$ appearing in the numerator of \eqref{116513}.

\textbf{Problem (ACP).} Find $\bar u^{\delta'}_{i}\in \mathcal U_d^{i}$ to minimize
\begin{equation*}
    J^{\delta'}_i(u_i)\!=\! \mathbb E\bigg[\exp\Big(\frac{\gamma}{1+\delta'}\Big\{\int_0^T\big[\|x_i\!-\!\Gamma z_i\|_Q^2\!+\!\|u_i\|_R^2\big]\mathrm{d}t\!+\!\|x_i(T)\!-\!\Gamma_f z_i(T)\|_{Q_f}^2\Big\}\Big)\bigg],
\end{equation*}
for $\delta'>0$, subject to
\begin{equation*}
    \mathrm{d}x_i=(Ax_i+Bu_i+Dz_i)\mathrm{d}t+\sigma\mathrm{d}w_i, \quad x_i(0)=\xi_i,
\end{equation*}
where $z_i=z_{I_i^*}$ is specified by \eqref{CCR}.

By the method  of Section \ref{sec:design}, we obtain the optimal control law
\begin{equation*}
    \bar u^{\delta'}_i=-R^{-1}B^{\top}\Pi^{\delta'}\bar x^{\delta'}_i-R^{-1}B^{\top}S_i^{\delta'},
\end{equation*}
where $\bar x^{\delta'}_i$ satisfies
\begin{equation*}
    \mathrm{d}\bar x^{\delta'}_i\!=\![(A\!-\!BR^{-1}B^{\top}\Pi^{\delta'})\bar x^{\delta'}_i\!-\!BR^{-1}B^{\top}S^{\delta'}_i\!+\!Dz_i]\mathrm{d}t\!+\!\sigma \mathrm{d}w_i,\quad \bar x^{\delta'}_i(0)\!=\!\xi_i.
\end{equation*}
Here, $\Pi^{\delta'}$ is given by the following Riccati equation
\begin{equation}\label{Riccatidelta}
    \begin{aligned}
      &\dot{\Pi}^{\delta'}\!+\!\Pi^{\delta'} A\!+\!A^{\top}\Pi^{\delta'}\!-\!\Pi^{\delta'}\Big(BR^{-1}B^{\top}\!-\!\frac{{2}{\gamma}}{1\!+\!\delta'}\sigma\sigma^{\top}\Big)\Pi^{\delta'}\!+\!Q\!=\!0,     \quad \Pi^{\delta'}(T)\!=\!Q_f,
    \end{aligned}
  \end{equation}
   and $S_i^{\delta'}$ satisfies the ODE
\begin{equation}\label{Sdelta}
    \begin{aligned}
      &\dot{S}^{\delta'}_i\!+\!\Big(A^{\top}\!-\!\Pi^{\delta'} BR^{-1}B^{\top}\!+\!\frac{{2}{\gamma}}{1\!+\!\delta'}\Pi^{\delta'}\sigma\sigma^{\top}\Big)S_i^{\delta'}\!-\!(Q\Gamma\!-\!\Pi^{\delta'} D) z_i\!=\!0,
    \end{aligned}
  \end{equation}
  with $S_i^{\delta'}(T)=-Q_f{\Gamma}_fz_{i}(T)$. By (H4), we have $BR^{-1}B^{\top}-\frac{2\gamma}{1+\delta'}\sigma\sigma^{\top}\geq 0$ for each $t\in [0,T]$. Then Riccati equation \eqref{Riccatidelta} admits a unique solution $\Pi^{\delta'}\in C([0,T];\mathbb S^n)$ under (H2)-(H4). Furthermore, ODE \eqref{Sdelta} also admits a unique solution $S^{\delta'}_i\in C([0,T];\mathbb R^n)$.

  The optimal cost is given by
  \begin{equation}\label{optimalcost2}
       J^{\delta'}_i(\bar u^{\delta'}_i)\!=\!\mathbb E\Big[\exp\Big(\frac{\gamma}{1\!+\!\delta'}\big\{\xi_{i}^{\top}\Pi^{\delta'}(0)\xi_{i}+2\xi_{i}^{\top}S_i^{\delta'}(0)+r_{i}^{\delta'}(0)\big\}\Big)\Big],
  \end{equation}
  where $r_i^{\delta'}$ satisfies
  \begin{equation}\label{ODEdelta}
    \begin{aligned}
      \dot{r}_i^{\delta'}\!-\!(S_i^{\delta'})^{\top}\Big(BR^{-1}B^{\top}\!-\!\frac{{2}{\gamma}}{1\!+\!\delta'}\sigma\sigma^{\top}\Big)S_i^{\delta'} \!+\!2z^{\top}_{i} D^{\top}S_i^{\delta'}\!+\!z^{\top}_{i}\Gamma^{\top} Q\Gamma z_{i}
      \!+\!{\rm Tr}(\sigma\sigma^{\top}\Pi^{\delta'})\!=\!0,
    \end{aligned}
  \end{equation}
with $r_i^{\delta'}(T)=z^{\top}_{i}(T){\Gamma}^{\top}_fQ_f{\Gamma}_fz_{i}(T)$. Under (H2)-(H4), ODE \eqref{ODEdelta} admits a unique solution $r_i^{\delta'}\in C([0,T];\mathbb R)$.

We proceed to estimate the denominator on the right-hand side of \eqref{116513}. For \eqref{pstate}-\eqref{perturstate}, by Lemma \ref{lemma56} and H\"older's inequality,  it holds that
{\allowdisplaybreaks
     \begin{align}
         &\mathbb E\bigg[\exp\Big\{\Theta T\Big(\sup_{0\leq t\leq T}|  x_i(t)- x^{\infty}_i(t)|^2\Big)\Big\}\bigg]
         \!\leq \!\Big(\mathbb E\Big[\exp\Big\{\frac{\bar C_1 \Theta }{N^2} \Big|\sum_{j=1}^N g_{ij}^N(\xi_j-\mathbb E\xi_j) \Big|^2\Big\}\Big]\Big)^{\frac{1}{7}}    \label{eTTxx}\\
         &\ \ \times\!\Big(\mathbb E\Big[\exp\Big\{\frac{\bar C_1 \Theta }{N^4} \Big(\sum_{i=1}^N\Big|\sum_{j=1}^N g_{ij}^N(\xi_j-\mathbb E\xi_j) \Big|\Big)^2\Big\}\Big]\Big)^{\frac{1}{7}}
          \Big(\mathbb E\Big[\exp\Big\{\frac{\bar C_1 \Theta}{N^2}\int_0^T |u_i^{cl}|^2\mathrm{d}t\Big\}\Big]\Big)^{\frac{1}{7}} \notag\\
          &\ \ \times\!\Big( e^{\bar C_1 \Theta(\frac{1}{N}+\varepsilon^2_N)} \Big)^{\frac{1}{7}} \Big(\mathbb E\Big[\exp\Big\{\frac{\bar C_1 \Theta}{N^4}\Big(\sup_{0\leq t\leq T}\sum_{i=1}^N\Big|\int_0^t \sum_{j=1}^Ng_{ij}^N\sigma \mathrm{d}w_j\Big|\Big)^2\Big\}\Big]\Big)^{\frac{1}{7}}
          \notag\\
         &\ \ \times\! \Big(\mathbb E\Big[\exp\Big\{ \frac{\bar C_1 \Theta}{N^2} \Big(\sup_{0\leq t\leq T}\Big|\!\int_0^t \! \sigma \mathrm{d}w_i\Big|\Big)^2\Big\}\Big]\Big)^{\frac{1}{7}} \Big(\mathbb E\Big[\exp\Big\{\frac{\bar C_1 \Theta}{N^2}\Big(\sup_{0\leq t\leq T}\Big|\int_0^t \sum_{j=1}^Ng_{ij}^N\sigma \mathrm{d}w_j\Big|\Big)^2\Big\}\Big]\Big)^{\frac{1}{7}}
         .\notag
     \end{align}
 }\noindent
where $\bar C_1>0$ is a constant depending on $\bar C$ and $T$.  Noticing \eqref{bound of eu}-\eqref{star48}, for all deterministic positive function $\Theta=f(N)$, $N\in \mathbb N_+$, satisfying $f(N)=o(1/\sqrt{\frac{1}{N}+\varepsilon_N^2})$, we have
\begin{equation*}\begin{aligned}
    &\sup_{N\geq \hat N}\mathbb E\Big[\exp\Big\{\frac{2\bar C_1 \Theta}{N^2}\int_0^T |u_i^{cl}|^2\mathrm{d}t\Big\}\Big]\leq \widetilde C_2,\!\quad \!
      \frac{\bar C_1 \Theta}{N^2}\int_0^T |u_i^{cl}|^2\mathrm{d}t\xrightarrow[N\rightarrow \infty]{\text{in probability}}  0,
\end{aligned}\end{equation*}
which by Vitali convergence theorem implies that
\begin{equation*}
    \begin{aligned}
        \mathbb E\Big[\exp\Big\{\frac{\bar C_1 \Theta}{N^2}\int_0^T |u_i^{cl}|^2\mathrm{d}t\Big\}\Big] =1+o(1).
    \end{aligned}
\end{equation*}
By the method of proving Lemma \ref{lem3389}, we use \eqref{eTTxx} to obtain
\begin{equation} \label{Eeeo1o1}
    \begin{aligned}
        \mathbb E\big[e^{\Theta \int_0^T| x_i - x^{\infty}_i |^2\mathrm{d}t}\big]&=1+o(1),\quad \text{and} \quad
        \mathbb E\big[e^{\Theta \int_0^T| x^{(N)}_i -z_i |^2\mathrm{d}t}\big]=1+o(1).
    \end{aligned}
\end{equation}
By use of \eqref{Eeeo1o1}, we follow the proof of Lemma \ref{lem34} to establish
\begin{equation}\label{718545}
    \begin{aligned}
        &\mathbb E\Big[\exp\Big\{\Theta\big|\Lambda_T(x_i^{\infty},z_i,u_i^{cl})\!-\!\Lambda_T(x_i,x_i^{(N)},u_i^{cl})\big|\Big\}\Big]=1+o(1).
    \end{aligned}
\end{equation}

Now, we select $\delta_N'=\frac{1}{\gamma}\sqrt[4]{\frac{1}{N}+\varepsilon_N^2}$ and $\Theta=\frac{\gamma}{\delta_N'}=\frac{1}{ \sqrt[4]{\frac{1}{N}+\varepsilon_N^2}}$, and it follows from \eqref{718545} that
\begin{equation}\label{110517}
    \Big(\mathbb E\Big[\exp\Big\{\frac{\gamma}{\delta_N'}\big|\Lambda_T(x_i^{\infty},z_i,u_i^{cl})-\Lambda_T( x_i,  x^{(N)}_i, u_i^{cl})\big|\Big\}\Big]\Big)^{{\delta_N'}}=1+o(1).
\end{equation}
Combining \eqref{116513}, \eqref{optimalcost2} with \eqref{110517}, we have
  \begin{equation}\begin{aligned}\label{lower}
      \mathcal J_i(u_i^{cl}, \hat u_{-i})&\!=\!  \mathbb E\big[e^{\gamma\Lambda_T( x_i,  x_i^{(N)},u_i^{cl})}\big] \!\geq\!  \frac{[J_i^{\delta_N'}(\bar u_i^{\delta_N'})]^{1+\delta_N'}}{1+o(1)}\!=\!  [J_i^{\delta_N'}(\bar u_i^{\delta_N'})]^{1+\delta_N'}[{1\!+\!o(1)}] .
  \end{aligned}\end{equation}

\textbf{Step 3.} We proceed to compare the perturbation of the cost functionals. By the continuous dependence of solutions on parameters in ODEs, we have
\begin{equation*}
    \begin{aligned}
        \lim_{N\rightarrow \infty}(\Pi^{\delta_N'},S_i^{\delta_N'},r_i^{\delta_N'})=(\Pi,S_{I_i^*},r_{I_i^*}),
    \end{aligned}
\end{equation*}
uniformly on $[0,T]$, which implies (recalling \eqref{optimalcost1}, \eqref{optimalcost2})
\begin{equation*}
    \begin{aligned}
        \lim_{N\rightarrow \infty} J_i^{\delta_N'}(\bar u_i^{\delta_N'})=J_i(\bar u_i).
    \end{aligned}
\end{equation*}
Noticing the upper bound \eqref{upper} and lower bound \eqref{lower} of $\mathcal J_i(u_i^{cl}, \hat u_{-i})$, for all sufficiently large $N$, we have
\begin{equation*}
    J_i(\bar u_i)[1+o(1)]\leq  \inf_{u_i\in \mathcal U_c^i}\mathcal J_i(u_i, \hat u_{-i})\leq  \mathcal J_i(\hat u_i, \hat u_{-i})\leq J_i(\bar u_i)[1+o(1)],
\end{equation*}
where $J_i(\bar u_i)\times o(1)\rightarrow 0$ as $N\rightarrow \infty$, uniformly with respect to $i$. Then we have
\begin{equation}\label{110519}
    \lim_{N\rightarrow \infty}\sup_{1\leq i\leq N}\Big|\inf_{u_i\in \mathcal U_c^i}\mathcal J_i(u_i, \hat u_{-i})-\mathcal J_i(\hat u_i, \hat u_{-i})\Big|=0,
\end{equation}
which completes the proof of the theorem.
\end{proof}

\begin{remark}\label{remark47}
i) The exponentiated cost structure here makes the performance estimate more difficult than in LQG MFGs with risk-neutral costs for which standard $L^2$ error estimates are adequate \cite{CH2021,HCM2007}.

ii) If the initial states are independent Gaussian random variables with uniformly bounded second moments, we can still establish the $\varepsilon$-Nash equilibrium theorem. In this case, the compactness condition in (H1) is not fulfilled, but we can directly estimate \eqref{810515} using the method in \cite[pp. 39-42]{MP1992} instead of Hoeffding's inequality.
\end{remark}

\section{Numerical example}\label{sec:numerical}

In this section, we provide a numerical example. We first  compute the function $z_{\alpha}$, which is specified by \eqref{CCR} or equivalently by \eqref{CC}.  Here we will use \eqref{CC} to facilitate the numerical method developed below. Inspired by \cite{GCH2023}, we decouple the  forward-backward equation \eqref{CC} by the following asymmetric operator Riccati equation
\begin{equation}\label{operatorRiccati}
  \left\{
  \begin{aligned}
    &\dot{\bf P}\!+\!{\bf P}(\mathbb A\!+\![D{\bf G}])\!+\!\mathbb A^{\top}{\bf P}\!+\![{2}{\gamma}\Pi\sigma\sigma^{\top}\mathbb I]{\bf P} \!-\!{\bf P}[BR^{-1}B^{\top}{\bf G}]{\bf P}\!-\![(Q\Gamma\!-\!\Pi D)\mathbb I]\!=\!0,\\
    &{\bf P}(T)\!=\!-[Q_f\Gamma_f \mathbb I].
  \end{aligned}
  \right.
\end{equation}
In analogue to \cite[Proposition 8]{GCH2023}, we can show that the operator Riccati equation \eqref{operatorRiccati} admits a unique mild solution (see \cite[Lemma 1]{GCH2023}) if $C_{\Xi}<1$, where $C_{\Xi}$ is defined by \eqref{CXi}.

Next, we will derive the spectral decomposition of this operator Riccati equation, which allows us to approximate the operator Riccati equation by use of  some finite dimensional matrix valued Riccati equations. By \cite[Proposition 6]{GCH2023}, we have the following spectral composition of Riccati equation \eqref{operatorRiccati}:
\begin{equation*}
  {\bf P}(t)
  =[P^{\bot}(t)\mathbb I]+\sum_{\ell\in \mathcal I_{\lambda}}[(P^{\ell}(t)-P^{\bot}(t)){\bf f}_{\ell}{\bf f}_{\ell}^{\top}].
\end{equation*}
where $\{{\bf f}_{\ell}\}_{{\ell}\in \mathcal I_{\lambda}}$ denotes the orthonormal eigenfunctions of ${\bf G}$; $\lambda_{\ell}$ denotes the eigenvalue of ${\bf G}$ corresponding to ${\bf f}_{\ell}$; and  $\mathcal I_{\lambda}$ is the index multiset for all the nonzero eigenvalues of ${\bf G}$. For further details on the eigenvalues, see Example \ref{exa}.

Moreover, $P^{\bot}$ and $P^{\ell}$ are respectively given by
\begin{equation}\label{Pb}\left\{
  \begin{aligned}
    &\dot{P}^{\bot}\!+\!{P}^{\bot}(A\!-\!BR^{-1}B^{\top}\Pi)\!+\!(A\!-\!BR^{-1}B^{\top}\Pi)^{\top}P^{\bot}\!+\!{2}{\gamma}\Pi\sigma\sigma^{\top}P^{\bot} \!-\!(Q\Gamma\!-\!\Pi D)\!=\!0,\\
    &P^{\bot}(T)\!=\!-Q_f\Gamma_f,
  \end{aligned}\right.
\end{equation}
and
\begin{equation}\label{P1}
  \left\{
  \begin{aligned}
    &\dot{P}^{\ell}\!+\!P^{\ell}(A\!-\!BR^{-1}B^{\top}\Pi\!+\!\lambda_{\ell}D)\!+\!\big(A\!-\!BR^{-1}B^{\top}\Pi\!+\!{2}{\gamma}\Pi\sigma\sigma^{\top}\big) P^{\ell}\\
    &\qquad\quad\! -\!\lambda_{\ell}P^{\ell}BR^{-1}B^{\top}P^{\ell}\!-\!(Q\Gamma\!-\!\Pi D)\!=\!0,\\
    &P^{\ell}(T)\!=\!-Q_f\Gamma_f.
  \end{aligned}
  \right.
\end{equation}

\begin{figure}[!t]
\centering
\subfloat[Sinusoidal graphon]{\label{graphon}\includegraphics[width=0.45\linewidth]{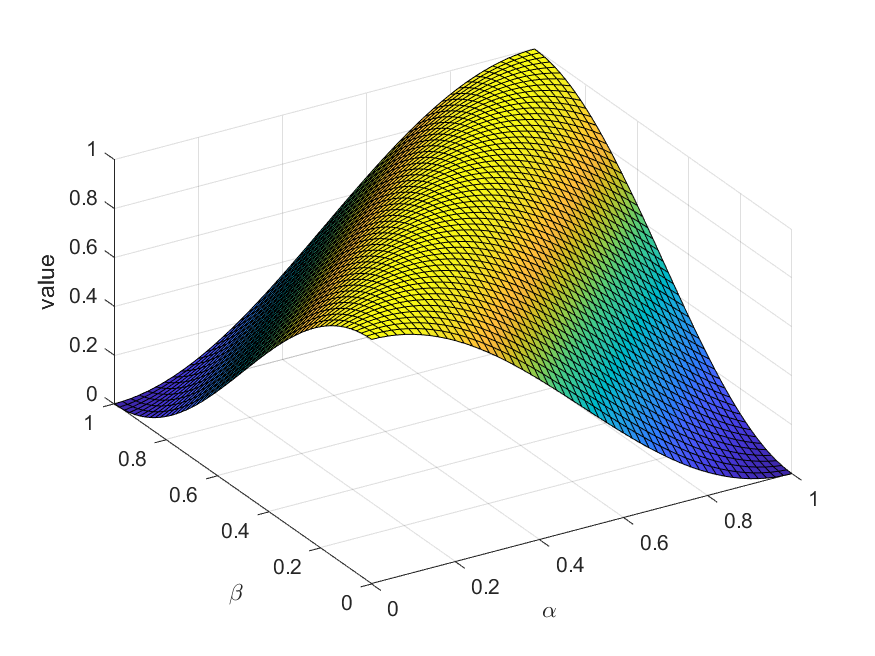}}
\subfloat[Graphon section $g_{\alpha}$]{\label{graphonsection}\includegraphics[width=0.45\linewidth]{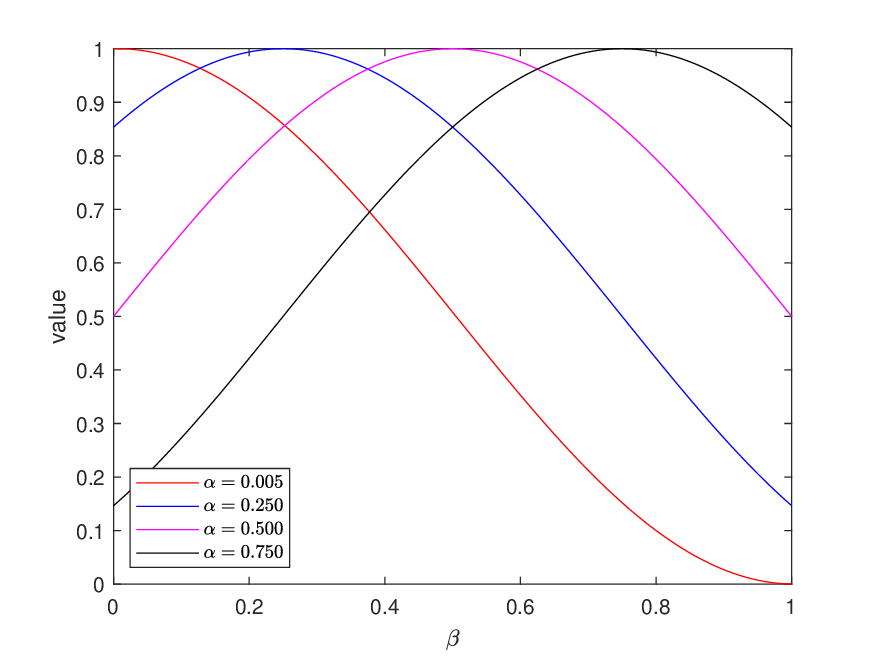}}
\caption{   Graphon and its section. }
\label{fig11}
\end{figure}
\begin{figure}[!t]
 \centering
 \includegraphics[width=0.6\linewidth]{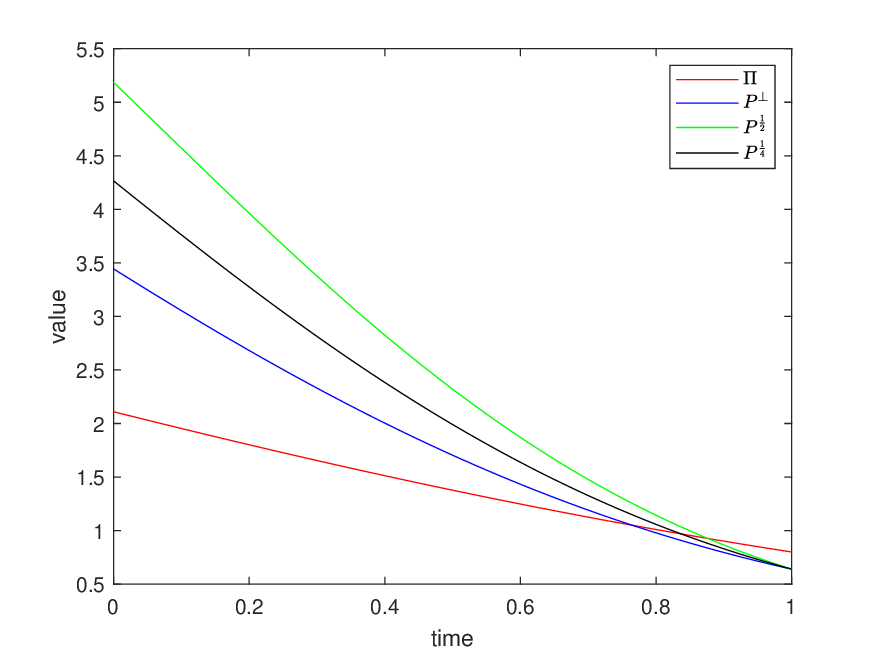}\\
 \caption{The solutions $(\Pi, P^{\perp}, P^{\frac{1}{2}}, P^{\frac{1}{4}})$ to Riccati equations.}\label{Riccati}
\end{figure}
\begin{figure}[!t]
\centering
\subfloat[State $\bar x_{\alpha}$]{\label{statep}\includegraphics[width=0.4\linewidth]{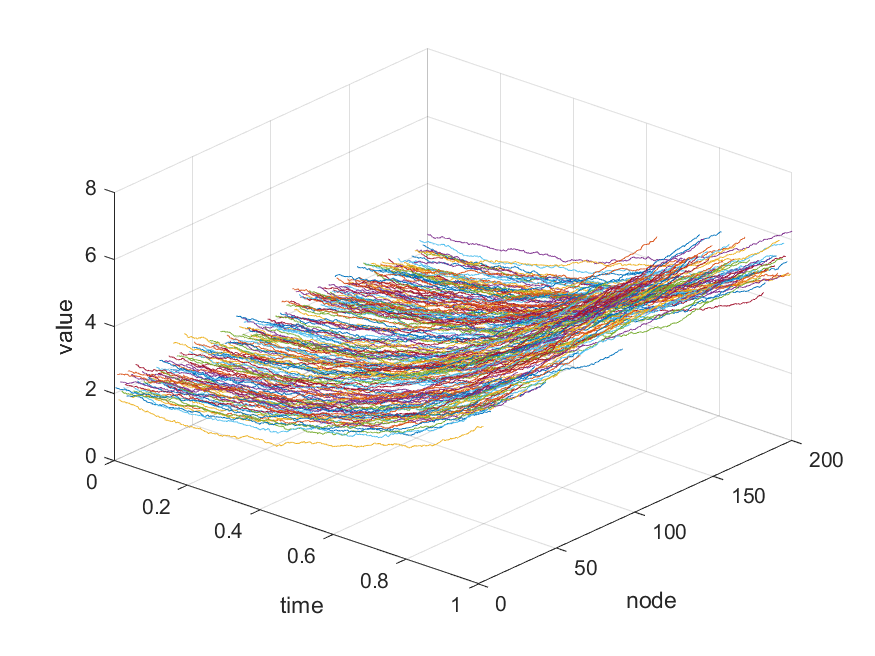}}
\subfloat[Control $\bar u_{\alpha}$]{\label{control}\includegraphics[width=0.4\linewidth]{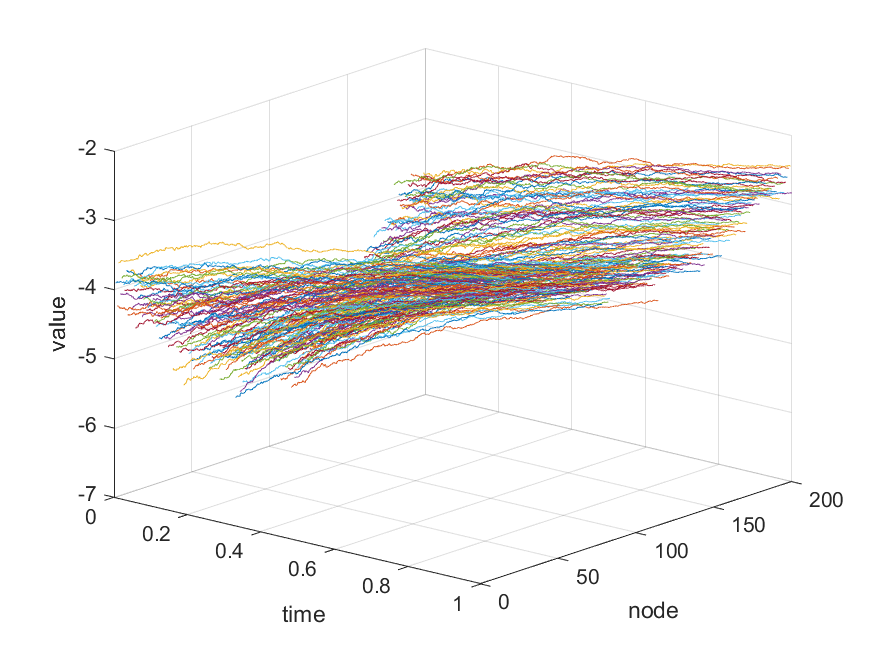}}
\caption{  The trajectories of state and control
for $200$ different $\alpha$ values under the graphon limit. }
\label{fig1}
\end{figure}
\begin{figure}[!t]
\centering
\subfloat[State $\bar x_{\alpha}$]{\label{state4}\includegraphics[width=0.45\linewidth]{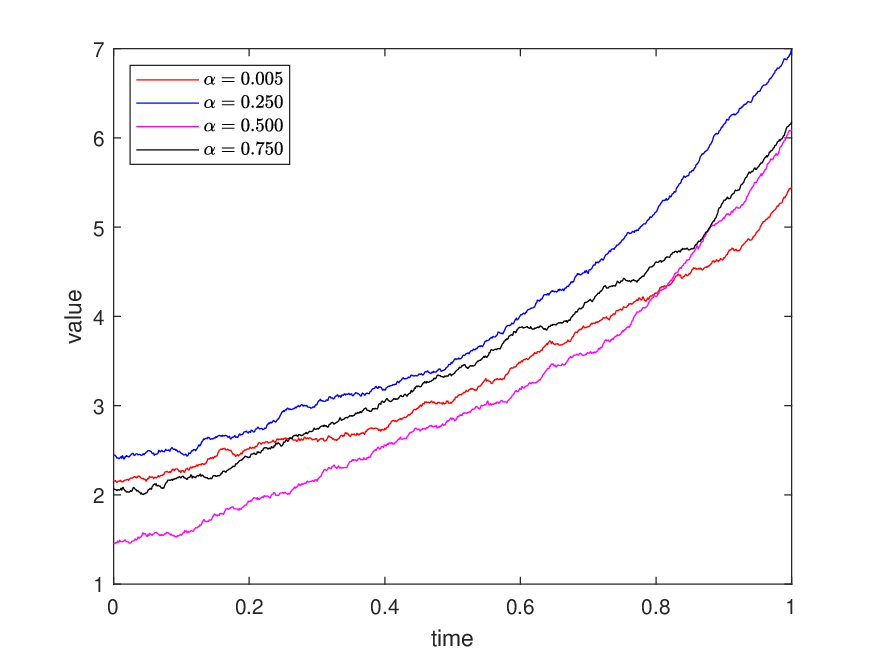}}
\subfloat[Control $\bar u_{\alpha}$]{\label{control4}\includegraphics[width=0.45\linewidth]{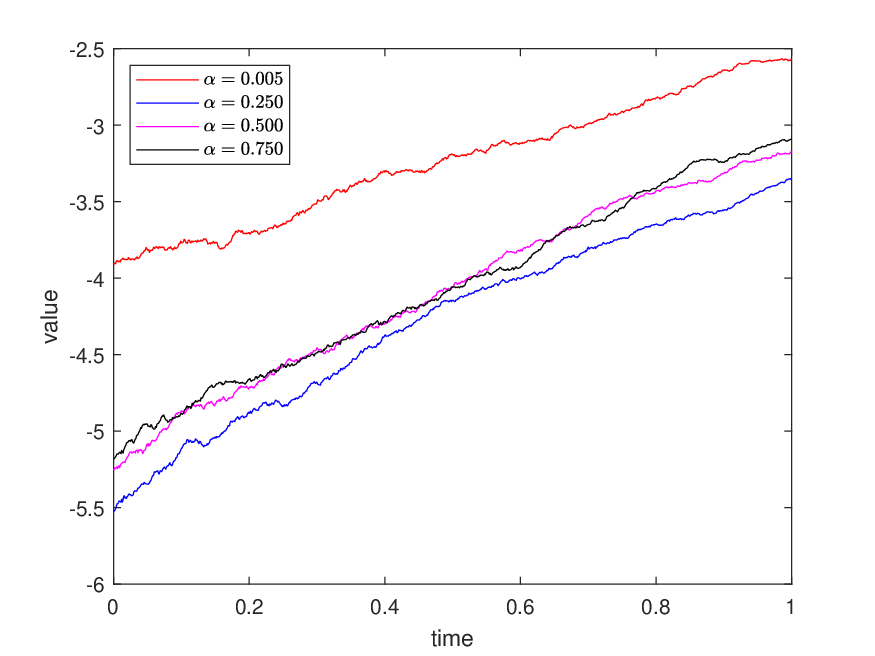}}
\caption{The trajectories of the state   and control with $4$ different $\alpha$ values.}
\label{fig4}
\end{figure}
\begin{figure}[!t]
\centering
\subfloat[Graphon weighted mean
state $z_{\alpha}$]{\label{zfig}\includegraphics[width=0.45\linewidth]{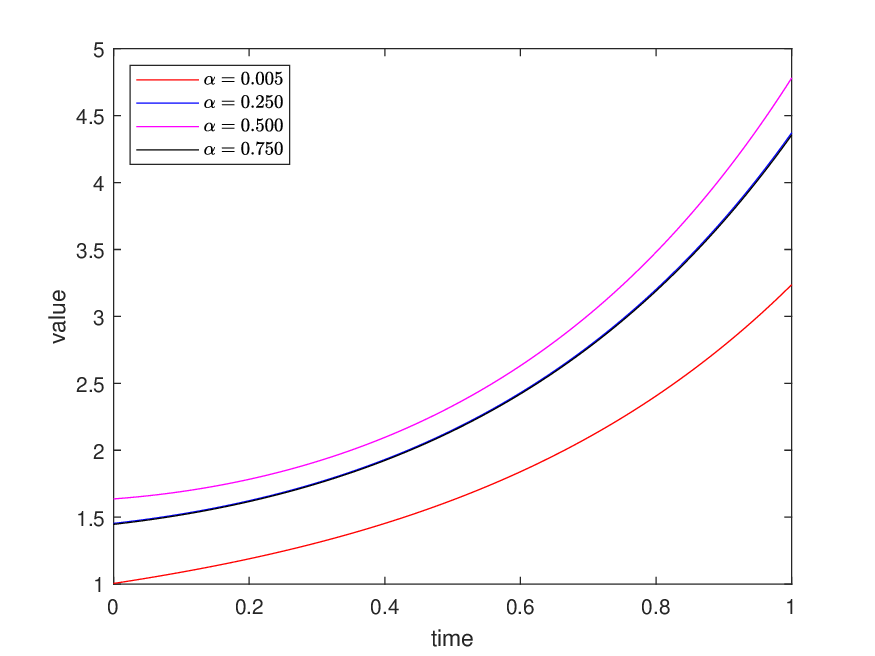}}
\subfloat[ODE solution $S_{\alpha}$]{\label{Sfig}\includegraphics[width=0.45\linewidth]{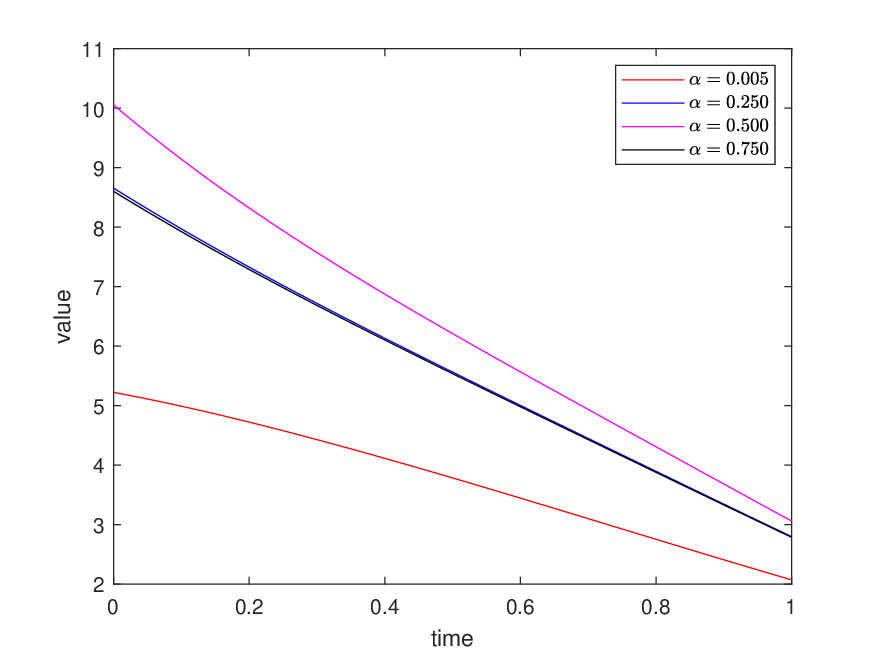}}
\caption{ The solutions to the GMFG equation system \eqref{CC}.}
\label{fig22}
\end{figure}

For illustration, we now consider a sinusoidal graphon as follows
\begin{equation}\label{g}
	{\bf G}(\alpha,\beta)=\cos^2\Big(\frac{\pi}{2} (\alpha-\beta)\Big),\quad \alpha,\beta\in [0,1],
\end{equation}
which is illustrated in Fig.~\ref{graphon}. For this sinusoidal graphon ${\bf G}$, we can show that the normalized eigenfunctions are ${\bf f}_1=1$, ${\bf f}_2=\sqrt{2}\cos \pi(\cdot)$ and ${\bf f}_3=\sqrt 2\sin \pi(\cdot)$ with eigenvalues $\lambda_1=\frac{1}{2}$, $\lambda_2=\lambda_3=\frac{1}{4}$. We take parameters  $T=1$, $\gamma=0.3$, $A=0.5$, $B=0.6$, $D=2$, $\sigma=0.5$, $Q=0.3$, $\Gamma=2$, $R=1.5$, $Q_f=0.8$, $\Gamma_f=-0.8$. The numerical solutions  of $(\Pi, P^{\bot}, P^{\frac{1}{2}}, P^{\frac{1}{4}})$ to Riccati equations \eqref{Phi} and \eqref{Pb}-\eqref{P1} are given in Fig.~\ref{Riccati}. We partition $[0,1]$ uniformly into $200$ subintervals, and take their left endpoints as the values of $\alpha$ and $\beta$. The initial values $\xi_{\alpha}$ are taken from Gaussian distribution $\mathcal N(2,0.1)$. Then the state process $\bar x_{\alpha}$ in \eqref{destate} with $200$ different $\alpha$ values in the infinite population model and the corresponding control $\bar u_{\alpha}$ are shown in  Fig.~\ref{statep}-Fig.~\ref{control}. In addition, to fully capture the heterogeneity among agents across different nodes, we plot the graphon section $g_{\alpha}$ for various values of $\alpha$ in Fig.~\ref{graphonsection}, and display  the corresponding state and control trajectories in Fig.~\ref{state4}-Fig.~\ref{control4}. As shown in Fig.~\ref{graphonsection},  the curves of graphon sections $g(0.25,\cdot)$ and $g(0.75,\cdot)$ are symmetric about the line $\alpha=0.5$. Since the initial state means are the same, then the curves of $(z_{0.25},S_{0.25})$ and $(z_{0.75},S_{0.75})$ coincide in Fig.~\ref{zfig}-Fig.~\ref{Sfig}.

\section{Conclusion}\label{sec:conclusion}
This paper investigates a class of risk-sensitive GMFGs, for which decentralized strategies are determined from the GMFG equation system as a family of fully coupled forward-backward equations indexed by the nodal parameter. The unique solvability of the GMFG equation system is established using both the fixed point method and the method of continuity.

For performance analysis of the resulting decentralized strategies,
novel exponential type error estimates are employed to establish an $\varepsilon$-Nash equilibrium theorem.

Our model does not include common noise in the agent dynamics since we are primarily interested in decentralized state feedback strategies while such strategies are generally inadequate for the search of an asymptotic Nash equilibrium when common noise appears.

\bibliographystyle{amsplain}
\bibliography{references}

\end{document}